\numberwithin{equation}{section}
\newcommand{\CC}{\mathbb{C}}
\newcommand{\OO}{\mathcal{O}}
\newcommand{\rh}{\mathit{R}\mathcal{H}\mathit{om}}
\newcommand{\id}{\mathrm{id}}
\newcommand{\supp}{\mathrm{supp}}
\newcommand{\imin}[1]{#1^{-1}}
\newcommand{\lind}[1]{\underset{#1}{\underrightarrow{\lim}}}
\newcommand{\xtimes}{\underset{X}{\times}}
\newcommand{\muhom}{\mu hom}
\newcommand{\csapd}{{\chi!!}}
\newcommand{\rcc}{{\mathbb{R}-c}}
\newcommand{\tYX}{{Y \times X}}
\newtheorem{teo}{Theorem}[section]
\newtheorem{df}[teo]{Definition}
\newtheorem{cor}[teo]{Corollary}
\newtheorem{oss}[teo]{Remark}
\newtheorem{prop}[teo]{Proposition}
\newtheorem{lem}[teo]{Lemma}
\newtheorem{es}[teo]{Example}
\title{$\mu hom$ and multi-microlocal operators}
\author{
Naofumi \textsc{Honda}
$\,\,$and$\,\,$ Luca \textsc{Prelli}
}
\date{}
\begin{document}
\maketitle
\begin{abstract}
In this paper, we construct the multi-microlocalization functor
$\mu hom_{\hat{\chi}}$
of homomorphisms, which is a counterpart of the functor $\mu hom$ studied
in M.~Kashiwara and P.~Schapira \cite{KS90}.
Furthermore, using the new functor, 
we also introduce several sheaves of multi-microlocal operators
which act on multi-microlocalized objects such as a multi-microfunction.
\end{abstract}

\tableofcontents

\addcontentsline{toc}{section}{\textbf{Introduction}}

\

Microlocal analysis initiated by M.~Sato, T.~Kawai and M.~Kashiwara (\cite{SKK})
had great impact on several areas of mathematics such as the theory of partial differential equations, that of cotangent geometry, etc. The main tools of microlocal analysis are specialization and microlocalization of a sheaf along a submanifold (see M.~Kashiwara and 
P.~Schapira \cite{KS90}).
Later these tools are extended to sheaves on subanalytic sites by the one of authors (\cite{Pr13}) and to ind-sheaves (M.~Kashiwara, P.~Schapira, F.~Ivorra, I.~Waschkies \cite{KSIW}), and we can now perform microlocal analysis to an object
which is not necessarily a classical sheaf, for example, the family of tempered holomorphic functions and the one of Whitney holomorphic functions.

However, some important objects appearing in analysis require specialization or microlocalization along several submanifolds at once such as strongly asymptotic developability by
H.~Majima (\cite{Ma84}), the simultaneous microlocalization 
by J.~M.~Delort (\cite{De96}) and
the small second microlocalization along two manifolds $M_1, M_2$  
with the totally ordered configuration $M_1 \subset M_2$ 
by P.~Schapira and K.~Takeuchi (\cite{ST94}) and K.~Takeuchi (\cite{Ta96}).
The authors and S.~Yamazaki (\cite{HP13}, \cite{HP24}, \cite{HPY}) recently develop 
the theory of multi-specialization and multi-microlocalization,
by which we can consider specialization and microlocalization 
of a sheaf on a subanalytic site along several submanifolds situated in a good 
configuration. By using these tools the objects mentioned above can 
be managed in a uniform way.
Furthermore applying these tools to the sheaves of holomorphic functions with
several growth orders, we have many new sheaves of strongly asymptotically developable 
functions and those of multi-microfunctions.

One of fundamental questions is to find a class of (multi-)microlocal operators which
can act on these new objects microlocally. The purpose of this paper is to construct the
functor $\mu hom_{\widehat{\chi}}$ 
in the category of multi-microlocalization that is a counterpart 
of the functor $\mu hom$ studied in \cite{KS90}, and then, to establish the theory of
multi-microlocal operators which act on multi-microlocalized objects.
Note that, for a normal crossing case and a totally ordered case with two manifolds already mentioned,
there were pioneering works for multi-microlocal operators by \cite{De96} and H.~Koshimizu and K.~Takeuchi (\cite{KT01}).
To accomplish these purposes in a general framework, 
we need to introduce several new notions, 
some of them are ``a partial order of forest type'', ``the universal family of submanifolds'', 
``a stable subset of ordered type'' 
and ``a compatible family of submersive morphisms'':

We briefly explain these new notions. A partial order $(\Lambda, \preceq)$ 
of forest type represents
an equivalent class of configurations of submanifolds. 
Each partial order $(\Lambda,\preceq)$ 
has a universal family $\widehat{\chi}$ of submanifolds 
which is universal against another family $\chi$
of submanifolds having the same ordered type $(\Lambda, \preceq)$
in the sense that 
the zero section $S^*_{\widehat{\chi}}$ of the multi-normal deformation along
the universal family $\widehat{\chi}$ is big enough so that 
the canonical projection from a submanifold 
in $ S^*_{\widehat{\chi}}$ to the zero section $S^*_\chi$ of
the multi-normal deformation along $\chi$ 
always exists (see Theorem {\ref{thm:muhom_and_mu}} also).
Then we define the functor $\mu hom_{\widehat{\chi}}$ in
the category of multi-microlocalization by using the usual multi-microlocalization
functor $\mu_{\widehat{\chi}}$ 
introduced in \cite{HPY}
along the universal family $\widehat{\chi}$.
Hence $\mu hom_{\widehat{\chi}}$ depends only on the universal family and gives an object on
the zero section $S^*_{\widehat{\chi}}$ of the universal family $\widehat{\chi}$.

Now we can construct multi-microlocal operators by applying $\mu hom_{\widehat{\chi}}$ 
to the sheaf of
holomorphic forms. It is known that 
the multi-microlocal operators have hierarchy so called 
``higher microlocalization''.  
In our framework of multi-microlocal operators,
this hierarchy is determined by a structure associated with the notions 
``a stable subset of ordered type'' and
``a compatible family of submersive morphisms''. In particular,
a submanifold consisting of fibers of each submersive morphism plays
an important role as a base submanifold of a higher microlocalization.
Examples \ref{es:t-s-op} and \ref{es:t-s-ac} typically explain these situations.

\

The paper is organized as follows:
Notions of ``a partial order of forest type`` and ''a universal family of submanifolds``
are explained in Section \ref{sec:universal}. Then, in Section {\ref{section:functor}},
we construct the functor $\mu hom_{\widehat{\chi}}$ and study its properties.
Sheaves of multi-microlocal operators and their actions are studied in
Section {\ref{sec:multil-micro-op}}.

%

\section{A universal family of submanifolds}{\label{sec:universal}}
Through the paper, we use the same notations as those in \cite{KS90}, 
\cite{HPY} and \cite{Pr13}. For the theory of sheaves on subanalytic sites
and that of multi-microlocalization, 
refer the readers \cite{Pr08} and \cite{HPY}, respectively.

Let $X$ be a real analytic manifold, and
let $\chi=\{M_1,\cdots, M_\ell\}$ be a family of closed real analytic submanifolds.
First recall the conditions H1, H2 and H3 for $\chi$ given in p.435 \cite{HP13}:
\begin{enumerate}
\item[H1.] Each $M_j$ is connected and $M_j \ne M_i$ for $i \ne j \in \{1,2,\cdots,\ell\}$.
\item[H2.] For any point $p \in X$, there exist a system
	$(x_1,\cdots,x_n)$ of local coordinates near $p$ and subsets $I_j \subset \{1,2,\cdots,n\}$
($j=1,2,\cdots,\ell$) which satisfy the conditions below:
\begin{enumerate}
	\item $(M_j,\,p) = (\{x_i = 0\,(i \in I_j)\},\, 0)$ holds locally for $j\in \{1,2,\cdots,\ell\}$ with $p \in M_j$,
\item Either $I_{j_1} \subset I_{j_2}$, $I_{j_2} \subset I_{j_1}$ or $I_{j_1} \cap I_{j_2} = \emptyset$ holds for any $j_1, j_2 \in \{1,2,\cdots,\ell\}$.
\end{enumerate}
\item[H3.] For any $j$, we have
$$
M_j\,\, \ne\,\,
\iota_{\chi}(M_j) := \bigcap_{\{M_k \in \chi;\,M_j \subsetneq M_k\}} M_k.
$$
Here if $\{M_k \in \chi;\,M_j \subsetneq M_k\}$ is empty, then we set
$\iota_{\chi}(M_j) = X$. 
\end{enumerate}

Note that, in this paper, we study specialization and microlocalization along a family $\chi$ of submanifolds which satisfies the above conditions H1, H2 and H3 as we did in \cite{HP13} and \cite{HPY}.

\subsection{A partial order of forest type}

Let $\Lambda= \{1,2,\cdots,\ell\}$ and $\preceq$ a partial order on $\Lambda$.
Hereafter, for $i,j \in \Lambda$, 
we write $i \precneq j$ if $i \preceq j$ and $i \ne j$ hold
and $i \nmid j$ if $i$ and $j$ are incomparable.
For $j \in \Lambda$, we set
\begin{equation}
\Lambda_{\preceq j} = \{i \in \Lambda\,|\, i \preceq j\}
\text{ and }
\Lambda_{\succeq j} = \{i \in \Lambda\,|\, i \succeq j\}.
\end{equation}
\begin{df}
We say that $\preceq$ is a partial order of forest type on $\Lambda$ if 
$\preceq$ becomes totally ordered on the subset $\Lambda_{\preceq j}$
for any $j \in \Lambda$.
\end{df}

\begin{df}
Let $\preceq_k$ $(k=1,2)$ be a partial order of forest type on $\Lambda$.
We say that both the partial orders are equivalent if there exists an ordered isomorphism
between $(\Lambda, \preceq_1)$ and $(\Lambda, \preceq_2)$.
\end{df}
We sometimes call an equivalent class $(\Lambda,\preceq)$ of partial orders of forest type 
just ``type''.

Let $E$ be a real vector space with a system $(x^{(0)};\, x^{(1)},\cdots,x^{(\ell)})$  of coordinates of $E$,
where each $x^{(k)}$ denotes a block consisting of coordinate variables of $E$. 
Assume that the coordinate block $x^{(k)}$ is not empty for any $k \in \Lambda$.
Then define
$$
M_j := \{(x^{(0)};\, x^{(1)},\cdots,x^{(\ell)}) \in E\,|\, x^{(i)} = 0
\,(j \preceq i)\} \qquad (j \in \Lambda).
$$
\begin{lem}{\label{lem:partial_order_submanifolds}}
Under the above situation, the family of submanifolds $\{M_j\}_{j\in \Lambda}$ 
in $E$ satisfies the conditions H1, H2 and H3. 
Conversely, let $\{N_j\}_{j \in \Lambda}$ be
a family of closed submanifolds in $E$ satisfying the conditions H1, H2 and H3.
Then the partial order $\preceq$ on $\Lambda$ determined by
$$
i \preceq j \iff N_i \subset N_j
$$
is the one of forest type.
\end{lem}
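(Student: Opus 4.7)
The plan is to treat the forward and converse directions separately.

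For the forward implication I would verify H1, H2, H3 directly from the definition $M_j=\{x^{(i)}=0\colon j\preceq i\}$. H1: since each block $x^{(k)}$ with $k\in\Lambda$ is nonempty, the map $j\mapsto\Lambda_{\succeq j}$ is injective ($j$ itself witnesses any difference), and each $M_j$ is a linear subspace hence connected. For H2 at a point $p\in E$, I use the ambient coordinates translated to $p$ and set $I_j$ to be the union of the coordinate indices appearing in the blocks $x^{(i)}$ with $i\succeq j$; clause (a) is immediate, and for clause (b) a nonempty intersection $I_{j_1}\cap I_{j_2}$ produces some $k$ with $k\succeq j_1$ and $k\succeq j_2$, so $j_1,j_2\in\Lambda_{\preceq k}$, which is totally ordered by the forest hypothesis, whence the desired containment of index sets. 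For H3 one first checks $M_j\subset M_k\iff j\preceq k$, so
\[
\iota_\chi(M_j)=\bigcap_{k\,\colon\, j\precneq k}M_k,
\]
and the block $x^{(j)}$ appears in no defining equation of the right-hand side (no $k$ satisfies $j\precneq k\preceq j$) while being forced to vanish in $M_j$; nonemptiness of $x^{(j)}$ gives $M_j\subsetneq\iota_\chi(M_j)$, with the empty-intersection case producing $\iota_\chi(M_j)=E\ne M_j$ for the same reason.

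For the converse, reflexivity, antisymmetry (via H1) and transitivity of $\preceq$ are immediate, so the real content is to show $\Lambda_{\preceq j}$ is totally ordered. Given $i_1,i_2\preceq j$ I apply H2 at a point $p$ common to $N_{i_1}$ and $N_{i_2}$ (the implicit convention in this setup, also satisfied by the $M_j$ above, is that the family shares a common base point) to obtain index sets with clause (a) yielding $I_j\subset I_{i_1}\cap I_{i_2}$. H3 for $N_j$ forces $I_j\ne\emptyset$: were $I_j$ empty, then $N_j$ would contain an open neighborhood of $p$ in $E$, so the connected closed submanifold $N_j$ would equal $E$, contradicting $N_j\ne\iota_\chi(N_j)$. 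Then clause (b) forces $I_{i_2}\subset I_{i_1}$ or vice versa; assume the former, which yields a local inclusion $N_{i_1}\subset N_{i_2}$ in a neighborhood of $p$.

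The main obstacle is promoting this local inclusion to a global one, which I plan to handle by real-analytic rigidity. Consider
\[
B:=\{q\in N_{i_1}\colon q \text{ has a neighborhood in } N_{i_1} \text{ contained in } N_{i_2}\}.
\]
$B$ is open by definition and nonempty by the local inclusion at $p$. For closedness, at a limit point $q_\infty\in N_{i_1}$ of points in $B$ I choose local analytic equations $f_1=\cdots=f_m=0$ defining $N_{i_2}$ near $q_\infty$ in $E$ (which exist since $N_{i_2}$ is closed analytic) and a connected neighborhood $V\subset N_{i_1}$ of $q_\infty$; the restrictions $f_\nu|_V$ are real analytic and vanish on the nonempty open piece of $V$ near some $q_n\in B\cap V$, hence vanish identically on $V$ by analytic continuation, so $V\subset N_{i_2}$ and $q_\infty\in B$. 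Connectedness of $N_{i_1}$ (H1) yields $B=N_{i_1}$, i.e., $N_{i_1}\subset N_{i_2}$ globally, giving $i_1\preceq i_2$ and establishing the forest-type property.
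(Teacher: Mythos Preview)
The paper states this lemma without proof, so there is no argument to compare against. Your proposal is essentially correct and would serve as a complete proof.

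One point deserves emphasis. In the converse direction you correctly observe that applying H2 requires a point common to $N_{i_1}$ and $N_{i_2}$, and you invoke ``the implicit convention \dots\ that the family shares a common base point.'' This is not merely a convenience but a genuine missing hypothesis: with only H1--H3 as written, the converse is false. For instance, in $E=\R^3$ take $N_1=\{(0,0,0)\}$, $N_2=\{(1,0,0)\}$, $N_3=\{z=0\}$; these satisfy H1--H3, yet $1,2\preceq 3$ with $1$ and $2$ incomparable, so the induced order is not of forest type. The paper's entire framework is local near $M=\bigcap_j N_j$ (this intersection is the base of $S_\chi$, $S^*_\chi$, etc.), so the authors are tacitly assuming $M\ne\emptyset$; your parenthetical remark pinpoints exactly this, and it would be worth stating it outright rather than as a convention.

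Granting a common point, the rest of your converse argument is sound: H3 forces $I_j\ne\emptyset$ (else $N_j=E=\iota_\chi(N_j)$), so $I_{i_1}\cap I_{i_2}\supset I_j\ne\emptyset$ and H2(b) yields a local inclusion, which your open--closed analytic continuation argument on the connected $N_{i_1}$ then globalizes. The forward direction is routine and your verification of H1--H3 is clean.
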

\begin{es}
Let $\Lambda = \{1,2,\cdots,\ell\}$. Let us consider the partial order $\preceq$ such that
any $i \ne j \in \Lambda$ are incomparable. Then such a $\preceq$ is of forest type and
it is often called ``normal crossing type''.

On the other hand, any total order on $\Lambda$ is also of forest type and
it is called ``totally ordered type''.
\end{es}
\begin{es}
Let $\Lambda= \{1,2,3\}$. The partial order $\preceq$ is determined so that
$3 \precneq 1$, $3 \precneq 2$ and the pair $1$ and $2$ is incomparable.
Then $\preceq$ is of forest type.
\end{es}

Let $X$ be a real analytic manifold and $(\Lambda, \preceq)$ a ``type''.
Set $X^2 = X \times X$ and denote by $\Delta_X$ the diagonal set in $X^2$.
We define a universal family 
\begin{equation}
\widehat{\chi}(X,\,(\Lambda,\preceq)) :=
\{\widehat{X}_1, \widehat{X}_2,\cdots, \widehat{X}_\ell\}
\end{equation}
with the base manifold $X$ associated 
with the type $(\Lambda,\preceq)$ as follows:
Set
$$
X_{k,j} :=
\begin{cases}
\Delta_X \quad & (k \preceq j), \\
X^2 \quad & (\text{otherwise})
\end{cases}
$$
and
$$
\widehat{X}_k := X_{k,1} \times \cdots \times X_{k,\ell} \subset (X^2)^\ell
\qquad (k \in \Lambda).
$$
Note that the universal family of the base manifold $X$ is a family
of closed submanifolds in $(X^2)^\ell$. 
For the universal family $\widehat{\chi} = \widehat{\chi}(X,\,(\Lambda,\preceq))$,
we can consider the multi-normal deformation 
$\widetilde{(X^2)^\ell}$ of $(X^2)^\ell$ along
$\widehat{\chi}$ as in \cite{HP13}. It follows from Proposition 1.2 \cite{HP13} that
the zero section $S_{\widehat{\chi}}$ of $\widetilde{(X^2)^\ell}$ is given by
$$
S_{\widehat{\chi}} =
T_{\Delta_X}X^2 \times T_{\Delta_X}X^2 \times \cdots \times T_{\Delta_X}X^2
= (TX)^\ell
$$
and hence, its dual vector bundle is
$$
S^*_{\widehat{\chi}} = 
T^*_{\Delta_X}X^2 \times T^*_{\Delta_X}X^2 \times \cdots \times T^*_{\Delta_X}X^2
= (T^*X)^\ell.
$$
We also define the vector bundle over $X$ by
$$
S^*_{\widehat{\chi},X} = 
X \times_{X^\ell} S^*_{\widehat{\chi}} =
X \times_{X^\ell} (T^*X)^\ell.
$$
Here $X$ is identified with the diagonal of $X^\ell$.
\begin{es}
Let us consider examples on $\Lambda = \{1,2,3\}$. 
\begin{itemize}
\item Let us consider the partial order $\preceq$ such that
any $i \ne j \in \Lambda$ are incomparable (normal crossing type). In this case we have
\begin{eqnarray*}
\widehat{X}_1 & = & \Delta_X \times X^2 \times X^2, \\
\widehat{X}_2 & = & X^2 \times \Delta_X \times X^2, \\
\widehat{X}_3 & = & X^2 \times X^2 \times \Delta_X.
\end{eqnarray*}
\item Let us consider a total order $\preceq$ on $\Lambda$ such that $3 \precneq 2 \precneq 1$ (totally ordered type). In this case we have
\begin{eqnarray*}
\widehat{X}_1 & = & \Delta_X \times X^2 \times X^2, \\
\widehat{X}_2 & = & \Delta_X \times \Delta_X \times X^2, \\
\widehat{X}_3 & = & \Delta_X \times \Delta_X \times \Delta_X.
\end{eqnarray*}
\item Let us consider a partial order $\preceq$ on $\Lambda$ such that
$3 \precneq 1$, $3 \precneq 2$ and the pair $1$ and $2$ is incomparable. In this case we have
\begin{eqnarray*}
\widehat{X}_1 & = & \Delta_X \times X^2 \times X^2, \\
\widehat{X}_2 & = &  X^2 \times \Delta_X \times X^2, \\
\widehat{X}_3 & = & \Delta_X \times \Delta_X \times \Delta_X.
\end{eqnarray*}
\end{itemize}
For these universal families, we have
$$
S^*_{\widehat{\chi}} = T^*X \times T^*X \times T^*X,\qquad
S^*_{\widehat{\chi},X} = T^*X \times_X T^*X \times_X T^*X.
$$
\end{es}

By taking Lemma \ref{lem:partial_order_submanifolds} into account,
a family  of closed submanifolds in $X$
satisfying the conditions H1, H2 and H3 is also called of forest type.
Let $\chi=\{M_1,\cdots, M_\ell\}$ be a family of closed submanifolds of forest type in $X$.
It follows from the conditions H1, H2 and H3 that, for any $i \ne j \in \{1,2,\cdots,\ell\}$, only one of the following configurations of submanifolds occurs:
\begin{enumerate}
\item $M_i \subsetneq M_j$.
\item $M_j \subsetneq M_i$.
\item $M_i$ and $M_j$ are transversally intersecting at any point in $M_i \cap M_j$.
\end{enumerate}
By the same reasoning as in Lemma \ref{lem:partial_order_submanifolds}, the partial order
$\preceq$ on $\Lambda$
$$
i \preceq j \iff M_i \subset M_j 
$$
becomes of forest type, which is called the type induced from $\chi$.
Then we obtain the universal family of closed submanifolds in $(X^2)^\ell$ 
associated with the induced type of $\chi$.  
Such a universal family is often denoted by $\widehat{\chi}$.

\subsection{A stable subset}
Let $(\Lambda,\preceq)$ be a ``type'' and $\Theta$ a subset in $\Lambda$.  
Note that $(\Theta,\,\preceq|_{\Theta})$ also defines a type.

\begin{df}{\label{df:stable_subset}}
A subset $\Theta$ is called stable if the following conditions are 
satisfied:
\begin{enumerate}
\item For any $i \in \Theta$, 
there exists $j \in \Lambda$ with $j \precneq i$.
\item For any $i \in \Theta$ and $j \in \Lambda$, 
	if $i \preceq j$, then $j \in \Theta$.
\end{enumerate}
\end{df}

%
%

We define a subset $\mathrm{m}^*(\Lambda,\Theta)$ of $\Lambda \setminus \Theta$ 
as follows: $j \in \mathrm{m}^*(\Lambda,\Theta)$ if and only if 
$j \in \Lambda \setminus \Theta$
and satisfies the one of the conditions below.
\begin{enumerate}
\item There exists no $j' \in \Lambda \setminus \Theta$ with $j \precneq j'$.
\item There exists an $i \in \Theta$ such that $j \precneq i$ and there is
no $j' \in \Lambda \setminus \Theta$ with $j \precneq j' \precneq i$.
\end{enumerate}
\begin{lem}{\label{lem:m-star-1}}
Let $j \in \Lambda \setminus \Theta$.  Then there exists $j \preceq k$
such that $k \in \mathrm{m}^*(\Lambda,\Theta)$.
\end{lem}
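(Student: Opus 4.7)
The plan is to dichotomize on whether any element of $\Theta$ sits above $j$. Set $S := \{i \in \Theta \,|\, j \preceq i\}$. Since $\Lambda$ is finite (it is $\{1,2,\dots,\ell\}$) and we are allowed to take $k=j$ whenever it works, I expect both cases to reduce to choosing a maximum in a suitable finite totally ordered subset, with the forest-type hypothesis guaranteeing that such subsets \emph{are} totally ordered.

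In the first case, $S = \emptyset$, so no element $\succeq j$ lies in $\Theta$. I would let $k$ be a maximal element of the finite nonempty set $\Lambda_{\succeq j}$; then $k \in \Lambda \setminus \Theta$ because $S = \emptyset$. The key observation is that any $k' \in \Lambda$ with $k \precneq k'$ automatically satisfies $k' \succeq j$, hence $k' \in \Lambda_{\succeq j}$, contradicting the maximality of $k$. In particular there is no $k' \in \Lambda \setminus \Theta$ with $k \precneq k'$, so $k$ fulfills condition (1) in the definition of $\mathrm{m}^*(\Lambda,\Theta)$.

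In the second case, $S \neq \emptyset$, and I would pick any $i \in S$; since $j \notin \Theta$ while $i \in \Theta$, one has $j \precneq i$. Now I invoke the forest-type hypothesis: $\Lambda_{\preceq i}$ is totally ordered, so the finite subset
\begin{equation*}
T := \{ j' \in \Lambda \setminus \Theta \,|\, j \preceq j' \precneq i\}
\end{equation*}
is totally ordered and nonempty (it contains $j$), hence admits a maximum $k$. Then $j \preceq k \precneq i$ and $k \notin \Theta$. If there were $j' \in \Lambda \setminus \Theta$ with $k \precneq j' \precneq i$, one would have $j \preceq k \precneq j' \precneq i$, so $j' \in T$ would contradict the maximality of $k$. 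Thus $k$ together with $i$ fulfills condition (2), and again $k \in \mathrm{m}^*(\Lambda,\Theta)$.

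The only non-obvious point is the first case: one must recognize that maximality of $k$ within the upper set $\Lambda_{\succeq j}$ is in fact maximality in $\Lambda$ itself (and hence also in $\Lambda\setminus\Theta$), which follows from transitivity of $\preceq$. The second case is the place where the forest-type assumption is genuinely needed, since otherwise the set of elements between $j$ and $i$ might not be linearly ordered and the maximum $k$ could fail to exist or to be unique enough for the argument to close.
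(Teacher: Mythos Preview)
Your proof is correct. The paper organizes the argument differently and a bit more economically: it does not case-split on whether $\Theta$ meets $\Lambda_{\succeq j}$. Instead it first picks a maximal element $k$ of $\{i\in\Lambda\setminus\Theta\mid j\preceq i\}$, then a maximal element $k'$ of $\{i\in\Lambda\mid k\preceq i\}$; if $k=k'$ one is in condition~(1), and if $k\ne k'$ then necessarily $k'\in\Theta$ (otherwise $k'$ would contradict the maximality of $k$), and $i:=k'$ witnesses condition~(2). In particular, the paper's argument never invokes the forest-type hypothesis, so your closing remark that this assumption is ``genuinely needed'' in Case~2 is too strong: you could equally well have taken a \emph{maximal} element of $T$ rather than its maximum, and the proof would go through for an arbitrary finite poset. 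What your decomposition buys is a cleaner identification of the witness $i\in\Theta$ in Case~2 (you fix it first and then locate $k$ below it), whereas the paper finds $k$ first and lets $i$ emerge afterwards.
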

\begin{proof}
Take the maximal element $k$ in $\{i \in \Lambda \setminus \Theta\,|\, j \preceq i\}$.
Then we again take the maximal element $k'$ in $\{i \in \Lambda\,|\, k \preceq i\}$.
If $k = k'$, we get $k \in \mathrm{m}^*(\Lambda,\Theta)$ by
the condition 1.~in the definition of $\mathrm{m}^*(\Lambda,\Theta)$. 
Otherwise, we have $k' \in \Theta$, and hence we conclude
$k \in \mathrm{m}^*(\Lambda,\Theta)$ 
by the condition 2.~in the definition of $\mathrm{m}^*(\Lambda,\Theta)$. 
\end{proof}

\begin{lem}{\label{lem:m-star-2}}
Let $j \in \Theta$.  Then there exists $k \preceq j$
such that $k \in \mathrm{m}^*(\Lambda,\Theta)$.
\end{lem}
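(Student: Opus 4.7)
The plan is to imitate the proof of Lemma \ref{lem:m-star-1}, but starting from above rather than from below. The key inputs are the forest property (any downset $\Lambda_{\preceq j}$ is totally ordered) and the first defining condition of a stable subset (every element of $\Theta$ has something strictly below it in $\Lambda$).

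First I would show that the set
\[
K := \{i \in \Lambda \setminus \Theta \,|\, i \preceq j\}
\]
is nonempty. This is the step I expect to be the main obstacle, since nothing in the hypotheses immediately guarantees that an element below $j$ lies outside $\Theta$. To see nonemptiness, I look at the chain $\Lambda_{\preceq j}$, which is finite and totally ordered by the forest property, and pick its minimum element $m$. Then $m$ has no predecessor in $\Lambda$ at all: if $i \precneq m$, then $i \in \Lambda_{\preceq j}$ by transitivity, contradicting minimality of $m$. Condition~1 of Definition \ref{df:stable_subset} then forces $m \notin \Theta$, so $m \in K$.

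Next, since $K \subset \Lambda_{\preceq j}$ and $\Lambda_{\preceq j}$ is totally ordered and finite, I can take the maximum element $k$ of $K$. By construction $k \in \Lambda \setminus \Theta$ and $k \preceq j$. It remains to verify that $k \in \mathrm{m}^*(\Lambda, \Theta)$, and for this I will use condition~2 of the definition of $\mathrm{m}^*$ with the witness $i := j \in \Theta$. Clearly $k \precneq j$ (since $k \notin \Theta$ and $j \in \Theta$, so they are distinct). Suppose for contradiction there existed $j' \in \Lambda \setminus \Theta$ with $k \precneq j' \precneq j$; then $j' \in K$ and $j' \succ k$, contradicting the maximality of $k$ in $K$. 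Hence no such $j'$ exists, $k$ satisfies condition~2 of the definition of $\mathrm{m}^*(\Lambda,\Theta)$, and the lemma follows.
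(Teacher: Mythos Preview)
Your proof is correct and follows essentially the same approach as the paper: define $K = \Lambda_{\preceq j} \cap (\Lambda \setminus \Theta)$, argue it is nonempty, and take its maximum. The paper asserts nonemptiness in one line (``$R$ is non-empty because $\Theta$ is stable''), whereas you spell out the argument via the minimum of the chain $\Lambda_{\preceq j}$ and condition~1 of Definition~\ref{df:stable_subset}; the verification that $k = \max K$ lies in $\mathrm{m}^*(\Lambda,\Theta)$ via condition~2 is identical.
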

\begin{proof}
Consider the subset 
$$
R = \Lambda_{\preceq j} \cap (\Lambda \setminus \Theta)
=\{i \in \Lambda \setminus \Theta\,|, i \preceq j\}.
$$
Then $R$ is non-empty because $\Theta$ is stable and it is totally ordered. 
It follows from 
the condition 2.~in the definition of $\mathrm{m}^*(\Lambda,\Theta)$ that
the maximum of $R$ belongs to $\mathrm{m}^*(\Lambda,\Theta)$.
\end{proof}

\begin{es}{\label{ex:t-s-type}}
Let $\Lambda = \{1,2,\cdots,\ell\}$ and let $(\Lambda,\preceq)$ 
be of totally ordered type, that is,
$$
\ell \precneq \ell - 1 \precneq \cdots \precneq 2 \precneq 1.
$$
For $0 \le d < \ell$, set
$$
\Theta=\{1,\cdots, d\}.
$$
Then $\Theta$ is a stable subset and $\mathrm{m}^*(\Lambda, \Theta) = \{d+1\}$.
\end{es}

\begin{es}{\label{ex:mixed-type}}
Let $\Lambda = \{1,2,3\}$. The partial order $\preceq$ on $\Lambda$ is defined by
$$
3 \precneq 1, \quad 3 \precneq 2, \quad 1 \nmid 2.
$$
Note that $(\Lambda, \preceq)$ is of forest type.

Let $\Theta = \{2\}$.  Then $\Theta$ is a stable subset 
and $\mathrm{m}^*(\Lambda, \Theta) = \{1, 3\}$.
\end{es}

\begin{es}
Let $\Lambda = \{1,2,\cdots,\ell\}$ and let $(\Lambda,\preceq)$ 
be of normal crossing type, that is,
$$
i \nmid j \qquad (i \ne j \in \Lambda).
$$
Let $\Theta = \emptyset$.  Then $\Theta$ is a stable subset 
and $\mathrm{m}^*(\Lambda, \Theta) = \Lambda$.
\end{es}

\section{The functor $\mu hom_\chi$}{\label{section:functor}}
Let $f:X \to Y$ be a morphism of real analytic manifolds,
and let $\chi^M=\{M_1,\dots,M_\ell\}$, $\chi^N=\{N_1,\dots,N_\ell\}$ be 
two families of closed submanifolds of forest type in $X$ and $Y$, respectively.
We assume the conditions below:
\begin{enumerate}
\item $f(M_j) \subseteq N_j$, $j=1,\dots,\ell$,
\item $M_j \subseteq M_{j'}$ iff $N_j \subseteq N_{j'}$.
\end{enumerate}
Note that the condition 2.~is equivalently saying that the types induced from 
$\chi^M$ and $\chi^N$ coincide.
Set $M=M_1\cap \cdots \cap M_\ell$ and $N=N_1 \cap \cdots \cap N_\ell$.
In this situation $f$ induces a morphism of vector bundles 
$$
f':S_{\chi^M}=\times_M T_{M_j}\iota(M_j) \to M \times_Y (\times_N T_{N_j}\iota(N_j)) = 
M \times_Y S_{\chi^N}
$$ 
and the map
$$
f_\tau:  M \times_Y S_{\chi^N} \to S_{\chi^N},
$$
where $\iota$ was defined in the condition H3 in Section \ref{sec:universal}
and $\times_M T_{M_j}\iota(M_j)$ denotes $T_{M_1}\iota(M_1) \times_X \cdots \times_X
T_{M_\ell}\iota(M_\ell)$.
With the same notations of \cite{KS90} we write $T_{\chi}f=f_\tau \circ f':S_{\chi^M} \to S_{\chi^N}$. The two propositions below have already been proved in \cite{HP13} and
\cite{HP24} for more general cases. 
For a non-empty subset $J \subset \{1,2,\cdots,\ell\}$, the $\chi^M_J$ denotes
the subfamily $\{M_j\}_{j \in J}$ of $\chi^M$, which is still of forest type.

\begin{prop}[Propostion 3.18 \cite{HP24}]{\label{prop:multimicro-proper}} 
Let $F \in D^b(k_{X_{sa}})$.
\begin{itemize}
\item[(i)] There exists a commutative diagram of canonical
morphisms
$$
\xymatrix{R(T_\chi f)_{\csapd}\nu^{sa}_{\chi^M} F  \ar[d] \ar[r] & \nu^{sa}_{\chi^N}Rf_{!!}F \ar[d] \\
R(T_\chi f)_*\nu^{sa}_{\chi^M} F & \nu^{sa}_{\chi^N}Rf_*F. \ar[l]}
$$
\item[(ii)] Moreover if $f:\supp F \to Y$ and $T_{\chi_J}f:C_{\chi^M_J}(\supp F) 
\to S_{\chi^N_J}$
for each non-empty $J \subset \{1,\dots,\ell\}$ are proper, and if $\supp F \cap \imin f(N_j) \subseteq M_j$, $j \in \{1,\ldots,\ell\}$, then
the above morphisms are isomorphisms.
\end{itemize}
\end{prop}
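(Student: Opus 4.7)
Since the statement is cited from \cite{HP24}, the plan is to reconstruct the proof along the same lines as the classical Proposition~4.2.4 of \cite{KS90}, transposing the argument from ordinary normal deformation to the multi-normal deformation framework of \cite{HP13} and its subanalytic upgrade in \cite{Pr13}. The starting point is to recall that the multi-specialization $\nu^{sa}_{\chi^M} F$ is constructed by pulling $F$ back to the multi-normal deformation $\widetilde{X}_{\chi^M}$ via the canonical projection $p_X:\widetilde{X}_{\chi^M}\to X$, restricting to the interior where the real parameters $t_j$ are positive, and then specializing to the zero section $S_{\chi^M}$ by means of an appropriate Sato-type colimit formulated on the subanalytic site. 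The morphism $f:X\to Y$ induces, by compatibility of multi-normal deformations under the assumptions $f(M_j)\subseteq N_j$ and identification of induced types, a morphism $\tilde f:\widetilde{X}_{\chi^M}\to\widetilde{Y}_{\chi^N}$ whose restriction to zero sections is exactly $T_\chi f$.

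\noindent First I would construct the diagram in (i). The two horizontal arrows arise from the natural base-change morphisms associated with the Cartesian-up-to-the-corners square relating the two deformations, applied to $Rf_{!!}$ and $Rf_*$ respectively. The left vertical arrow is the canonical morphism $R(T_\chi f)_{\csapd}\to R(T_\chi f)_*$ coming from the general comparison of proper direct image and direct image on subanalytic sites, and the right vertical arrow is the analogous comparison for $f$ itself. Commutativity is a formal check: both compositions factor through the pullback to $\widetilde{X}_{\chi^M}$, and the diagram commutes in the deformation before specialization, so it descends to the zero section.

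\noindent For (ii), the key step is to show that under the assumption that $f$ restricted to $\supp F$ is proper, the two horizontal base-change morphisms are isomorphisms, since then the left vertical morphism agrees with the right one. The properness of $T_{\chi_J} f$ on the conormal cones $C_{\chi^M_J}(\supp F)$ for every non-empty $J$ is used inductively: it gives proper base change on each stratum of the boundary of $\widetilde{Y}_{\chi^N}$ corresponding to the index set $J$, so that the restriction morphisms commute with $Rf_{!!}$ and $Rf_*$ in the corresponding corner charts. The support condition $\supp F\cap f^{-1}(N_j)\subseteq M_j$ guarantees that on these strata the colimit defining multi-specialization only sees sections of $F$ supported near $M$, which is exactly what is needed to match both sides.

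\noindent The main obstacle, and the reason to treat a non-empty $J\subsetneq\{1,\dots,\ell\}$ carefully rather than only $J=\{1,\dots,\ell\}$, is that the multi-normal deformation has a stratified boundary with corners indexed by the subsets of $\Lambda$, and a naive properness on $M$ is insufficient to control the iterated subanalytic colimits used in forming $\nu^{sa}_{\chi^N}$. One must verify proper base change in each corner chart separately, which requires the condition on every $T_{\chi_J}f$ and a compatibility check with the support hypothesis; this is precisely the technical content imported from \cite{HP24}.
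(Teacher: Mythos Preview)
The paper does not actually prove this proposition: it is stated as Proposition~3.1 with the label ``Proposition~3.18 \cite{HP24}'' and is explicitly introduced by the sentence ``The two propositions below have already been proved in \cite{HP13} and \cite{HP24} for more general cases.'' So there is no in-paper argument to compare against; the paper simply imports the result.

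Your sketch is a reasonable outline of how the cited proof in \cite{HP24} would go, and it correctly identifies the structural parallel with Proposition~4.2.4 of \cite{KS90}: lift $f$ to a map $\tilde f$ between the multi-normal deformations, obtain the horizontal arrows from the associated base-change morphisms before restricting to the zero section, and use the properness hypotheses for (ii). Your explanation of why the conditions on \emph{every} non-empty $J$ are needed---the boundary of the multi-normal deformation has corners indexed by subsets of $\Lambda$, so one must control the specialization on each stratum separately---is the right intuition and matches how \cite{HP24} organises the argument. One point you leave implicit is the role of the support condition $\supp F\cap f^{-1}(N_j)\subseteq M_j$: beyond ensuring the colimit ``only sees sections near $M$'', it is what guarantees that the lifted map $\tilde f$ sends the relevant support locus in $\widetilde{X}_{\chi^M}$ into the correct strata of $\widetilde{Y}_{\chi^N}$, so that the induced map on zero sections really is $T_\chi f$ on the support. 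As a proof \emph{plan} this is fine; as an actual proof it would still need the explicit construction of $\tilde f$ and the verification of proper base change on the subanalytic site in each corner chart, which is the technical work carried out in \cite{HP24}.
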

Here for a multi-conic map $h: S_{\chi^M} \to S_{\chi^N}$,
the functor $h_{\csapd}$ denotes a proper direct image functor
in the categories of multi-conic sheaves on subanalytic sites. For the details, see Subsection 2.5 in \cite{Pr13} and Appendix in \cite{HP13}. That is, for a multi-conic sheaf $G$
on $(S_{\chi^M})_{sa}$ and a multi-conic open subanalytic subset $U \subset S_{\chi^N}$
with $\tau_N(U)$ being relatively compact 
($\tau_N: S_{\chi^N} \to N$ 
and $\tau_M: S_{\chi^M} \to M$ 
are the canonical projections),
\begin{equation}
\Gamma(U;\,h_{\csapd} G) = \lind{Z,K}\,\Gamma_{\tau_M^{-1}(K) \cap Z}(h^{-1}(U);\,G),
\end{equation}
where $Z$ runs through multi-conic closed subanalytic subsets in $h^{-1}(U)$ such that
$h|_{Z}: Z \to U$ is proper over $U$ and $K$ runs through compact subsets in 
$M$.

\begin{prop}[Proposition 3.19 \cite{HP24}] Let $F \in D^b(k_{Y_{sa}})$.
\begin{itemize}
\item[(i)] There exists a commutative diagram of canonical
morphisms
$$
\xymatrix{\omega_{S_{\chi^M}/S_{\chi^N}}\otimes\imin {(T_\chi f)}\nu^{sa}_{\chi^N} F  \ar[d] \ar[r] & \nu^{sa}_{\chi^M}(\omega_{X/Y}\otimes \imin f F) \ar[d] \\
T_{\chi} f^!\nu^{sa}_{\chi^N} F & \nu^{sa}_{\chi^M}f^!F. \ar[l]}
$$
\item[(ii)]
The above morphisms are isomorphisms on the open sets where $T_{\chi} f$ is smooth.
\end{itemize}
\end{prop}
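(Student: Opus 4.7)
The plan is to mimic the classical proof of Kashiwara--Schapira \cite{KS90} \S4.3, adapted to the multi-microlocalization setting on subanalytic sites, using Proposition \ref{prop:multimicro-proper} as the analogue for direct images. The backbone of the argument is the compatibility of the canonical natural transformation $\omega_g \otimes \imin g (-) \to g^!(-)$, defined for any morphism $g$ of real analytic manifolds (or, here, of vector bundles), with both $\nu^{sa}_\chi$ and the change of morphism $f \leadsto T_\chi f$.

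First I would construct the four morphisms of the square. The bottom arrow is obtained from Proposition \ref{prop:multimicro-proper}(i): substituting $f^!F$ for $F$ and post-composing with $\nu^{sa}_{\chi^N}$ applied to the counit $Rf_{!!}f^!F \to F$ gives
\begin{equation*}
R(T_\chi f)_{\csapd}\nu^{sa}_{\chi^M} f^!F \to \nu^{sa}_{\chi^N} F,
\end{equation*}
whose adjoint under the $(R(T_\chi f)_{\csapd}, (T_\chi f)^!)$ adjunction, available for multi-conic sheaves on subanalytic sites by the description of $\csapd$ in terms of compact supports in the fibres (cf.~\cite{Pr13}, \S2.5 and the Appendix of \cite{HP13}), is the desired morphism $\nu^{sa}_{\chi^M} f^!F \to T_\chi f^!\nu^{sa}_{\chi^N} F$. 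The top arrow is built by combining a canonical base-change morphism $\imin{(T_\chi f)}\nu^{sa}_{\chi^N} F \to \nu^{sa}_{\chi^M}\imin f F$, obtained from functoriality of the multi-normal deformation (the subanalytic multi-conic analogue of \cite{KS90}~4.2.5), with the identification of $\nu^{sa}_{\chi^M}\omega_{X/Y}$ with $\omega_{S_{\chi^M}/S_{\chi^N}}$ twisted by $\imin{(T_\chi f)}$, which in turn comes from additivity of orientation sheaves along each factor of the multi-normal deformation. The two vertical arrows are instances of the canonical morphism $\omega_g \otimes \imin g (-) \to g^!(-)$, on the left with $g=T_\chi f$ and on the right applied inside $\nu^{sa}_{\chi^M}$ with $g=f$.

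Commutativity of the square then reduces to a diagram chase using the naturality of $\omega_g \otimes \imin g \to g^!$ in $g$, together with functoriality of $\nu^{sa}$ and the compatibility of the two adjunctions. The main obstacle will be verifying precisely that the base-change morphism for $\imin{(\cdot)}$ and the adjoint morphism coming from Proposition \ref{prop:multimicro-proper}(i) are compatible at the level of the orientation factors, which requires careful bookkeeping of the proper direct-image functor $R(\cdot)_{\csapd}$ on multi-conic subanalytic sheaves and the corresponding duality morphism.

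For part (ii), assume $T_\chi f$ is smooth on an open subset $V \subset S_{\chi^M}$. Restricted to $V$, the left vertical arrow is an isomorphism by the classical fact that $\omega_g\otimes\imin g \iso g^!$ for smooth $g$. The top horizontal arrow is an isomorphism on $V$ by the smooth base-change compatibility of multi-specialization with inverse images, which follows from the explicit fibre-by-fibre description of $\nu^{sa}_\chi$ along the multi-normal deformation already exploited in \cite{HP13} and \cite{HP24}. Commutativity of the square then forces the remaining two arrows to be isomorphisms on $V$.
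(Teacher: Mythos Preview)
The paper does not actually prove this proposition. Immediately before stating it, the authors write ``The two propositions below have already been proved in \cite{HP13} and \cite{HP24} for more general cases,'' and the statement itself is labeled ``Proposition 3.19 \cite{HP24}''. So there is no proof in the paper against which to compare your attempt.

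That said, your outline is the natural adaptation of the classical argument of \cite{KS90}, Proposition~4.2.5, to the multi-specialization setting on subanalytic sites, and is almost certainly what the cited reference \cite{HP24} carries out. The construction of the bottom arrow via adjunction from Proposition~\ref{prop:multimicro-proper}(i), the top arrow via functoriality of the multi-normal deformation, and the vertical arrows as instances of $\omega_g\otimes g^{-1}\to g^!$ are all standard. One point to watch: your argument for part~(ii) concludes that commutativity of the square ``forces the remaining two arrows to be isomorphisms,'' but commutativity of a square in which two adjacent arrows are isomorphisms only shows that the composite around the other two sides is an isomorphism, not each arrow separately. You should instead argue directly that the right vertical arrow is an isomorphism (since $\omega_{X/Y}\otimes f^{-1}\to f^!$ is already an isomorphism on the locus where $f$ is smooth, and $T_\chi f$ smooth entails the relevant smoothness for $f$ on the fibres), and then deduce the bottom arrow from the square.
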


The proof of the following result goes as Proposition 4.2.6 of \cite{KS90}. 
Remark that the family $\{M_1 \times N_1,\dots,M_\ell \times N_\ell\}$ has the same type as the one of $\chi^M$ and $\chi^N$.

\begin{prop} Let $F \in D^b(\CC_{X_{sa}})$ and $G \in D^b(\CC_{Y_{sa}})$. There is a natural morphism
$$
\nu^{sa}_{\chi^M}F \boxtimes \nu^{sa}_{\chi^N}G \to \nu^{sa}_{\chi^{M \times N}}(F \boxtimes G).
$$
where $\chi^{M \times N}=\{M_1 \times N_1,\dots,M_\ell \times N_\ell\}$.
\end{prop}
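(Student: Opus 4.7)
The plan is to adapt the proof of Proposition 4.2.6 of \cite{KS90} to the present multi-microlocal setting: assemble the sought map from the Künneth morphism for the external product and the direct image, together with a base change morphism along a natural ``diagonal'' map between multi-normal deformations. Throughout, I write $\widetilde{X},\widetilde{Y},\widetilde{X\times Y}$ for the multi-normal deformations of $X$, $Y$ and $X\times Y$ along $\chi^M$, $\chi^N$ and $\chi^{M\times N}$, $p_X,p_Y,p_{X\times Y}$ for the canonical projections to the base manifolds, $j_X,j_Y,j_{X\times Y}$ for the open embeddings of the ``positive'' open subsets $\Omega_{\chi^M},\Omega_{\chi^N},\Omega_{\chi^{M\times N}}$, and $s_X,s_Y,s_{X\times Y}$ for the inclusions of the zero sections $S_{\chi^M},S_{\chi^N},S_{\chi^{M\times N}}$, so that $\nu^{sa}_{\chi^M}(\cdot)=s_X^{-1}Rj_{X,*}j_X^{-1}p_X^{-1}(\cdot)$ and similarly for the other two.

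The first step is to exploit the fact that the three families share the same forest type $(\Lambda,\preceq)$: this identifies canonically the three deformation parameter spaces, and the natural isomorphisms $T_{M_j\times N_j}(X\times Y)\simeq T_{M_j}X\times T_{N_j}Y$ assemble along the forest structure into an identification $S_{\chi^{M\times N}}\simeq S_{\chi^M}\times S_{\chi^N}$. Using the local coordinates furnished by condition H2 and the chart-by-chart construction of the multi-normal deformation of \cite{HP13}, one obtains a canonical closed embedding
$$
\delta:\widetilde{X\times Y}\;\hookrightarrow\;\widetilde{X}\times\widetilde{Y}
$$
covering the diagonal of the parameter spaces, satisfying $(p_X\times p_Y)\circ\delta=p_{X\times Y}$ and $\delta\circ s_{X\times Y}=s_X\times s_Y$, and such that $\delta^{-1}(\Omega_{\chi^M}\times\Omega_{\chi^N})=\Omega_{\chi^{M\times N}}$; I denote by ${\delta'}:\Omega_{\chi^{M\times N}}\to\Omega_{\chi^M}\times\Omega_{\chi^N}$ the induced morphism on open parts.

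The morphism is then produced by the chain
\begin{align*}
\nu^{sa}_{\chi^M}F\boxtimes\nu^{sa}_{\chi^N}G
&= (s_X\times s_Y)^{-1}\bigl(Rj_{X,*}j_X^{-1}p_X^{-1}F\,\boxtimes\,Rj_{Y,*}j_Y^{-1}p_Y^{-1}G\bigr)\\
&\to (s_X\times s_Y)^{-1}R(j_X\times j_Y)_*(j_X\times j_Y)^{-1}(p_X\times p_Y)^{-1}(F\boxtimes G)\\
&= s_{X\times Y}^{-1}\delta^{-1}R(j_X\times j_Y)_*(j_X\times j_Y)^{-1}(p_X\times p_Y)^{-1}(F\boxtimes G)\\
&\to s_{X\times Y}^{-1}Rj_{X\times Y,*}j_{X\times Y}^{-1}p_{X\times Y}^{-1}(F\boxtimes G)\\
&= \nu^{sa}_{\chi^{M\times N}}(F\boxtimes G),
\end{align*}
in which the first arrow is the Künneth morphism for $\boxtimes$ and $R(-)_*$, the intermediate equality uses the factorization $s_X\times s_Y=\delta\circ s_{X\times Y}$, and the second arrow is the base change morphism $\delta^{-1}R(j_X\times j_Y)_*\to Rj_{X\times Y,*}{\delta'}^{-1}$ attached to the cartesian square from $\delta^{-1}(\Omega_{\chi^M}\times\Omega_{\chi^N})=\Omega_{\chi^{M\times N}}$, combined with $(j_X\times j_Y)\circ{\delta'}=\delta\circ j_{X\times Y}$ and $(p_X\times p_Y)\circ\delta=p_{X\times Y}$ to identify the resulting pullback as $j_{X\times Y}^{-1}p_{X\times Y}^{-1}(F\boxtimes G)$. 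Naturality in $F$ and $G$ is inherited from the naturality of each ingredient.

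The main obstacle is to transfer this construction cleanly to the subanalytic setting: one needs the Künneth morphism for $\boxtimes$ and $R(-)_*$ and the base change morphism for $\delta$ to be available in the relevant subanalytic derived categories, and the chart-by-chart construction of $\delta$ from the local coordinates of H2 must be compatible with the subanalytic sites on both sides. These compatibilities are built into the formalism of \cite{Pr08} and \cite{HPY}, so once $\delta$ has been written down locally the remaining verifications reduce to the naturality and functoriality already established there.
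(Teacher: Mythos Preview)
Your proposal is correct and follows exactly the approach indicated in the paper, which simply states that the proof ``goes as Proposition 4.2.6 of \cite{KS90}'' and remarks that $\chi^{M\times N}$ has the same type as $\chi^M$ and $\chi^N$. Your construction of the diagonal embedding $\delta$ of multi-normal deformations via the shared forest type, followed by the K\"unneth and base-change morphisms, is precisely the expected adaptation of that argument to the multi-specialization setting.
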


Note that Corollary 4.2.7 \cite{KS90} can be obtained as well, but we do not need it in this paper.

Now, let us consider the multi-microlocalization. As for the classical case, 
we call ${}^tf'$ and $f_\pi$ the maps obtained from $f'$ and $f_\tau$, respectively. The following propositions are obtained by applying multi-Fourier-Sato transforms to the previous results for multi-specialization combined with Proposition 3.2.9, 3.2.10, 3.2.11 of \cite{Pr13}.

\begin{prop}{\label{prop:direct_image}} Let $F \in D^b(\CC_{X_{sa}})$.
\begin{itemize}
\item[(i)] There exists a commutative diagram of canonical
morphisms
$$
\xymatrix{Rf_{\pi \csapd}{}^tf'{}^{-1}\mu^{sa}_{\chi^M} F  \ar[d] \ar[r] & \mu^{sa}_{\chi^N}Rf_{!!}F \ar[d] \\
Rf_{\pi *}({}^tf'{}^{!}\mu^{sa}_{\chi^M} F \otimes \omega_{X/Y} \otimes \omega_{M/N}^{\otimes -1}) & \mu^{sa}_{\chi^N}Rf_*F. \ar[l]}
$$
\item[(ii)] Moreover if the same conditions as in Proposition {\ref{prop:multimicro-proper}} (ii) are satisfied, then the above morphisms are isomorphisms.
\end{itemize}
\end{prop}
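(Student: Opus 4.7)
The plan is to deduce this proposition from the multi-specialization version in Proposition \ref{prop:multimicro-proper} by applying the multi-Fourier--Sato transform and using its functorial exchange properties with respect to proper direct image and base change, as formulated in Propositions 3.2.9, 3.2.10, 3.2.11 of \cite{Pr13}. This follows the strategy of Proposition 4.3.4 of \cite{KS90}, adapted to the multi-conic subanalytic setting.

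First, I would factor the map $T_\chi f : S_{\chi^M} \to S_{\chi^N}$ as $T_\chi f = f_\tau \circ f'$, where $f' : S_{\chi^M} \to M \times_Y S_{\chi^N}$ is a morphism of vector bundles over $M$ (the linearization provided by the differential of $f$ on the multi-conormal data) and $f_\tau : M \times_Y S_{\chi^N} \to S_{\chi^N}$ is the base-change projection. On the dual side the corresponding factorization involves ${}^tf' : M \times_Y S^*_{\chi^N} \to S^*_{\chi^M}$ and $f_\pi : M \times_Y S^*_{\chi^N} \to S^*_{\chi^N}$. The morphisms in Proposition \ref{prop:multimicro-proper}(i) can then be rewritten as compositions along this factorization, i.e.\ as morphisms $Rf_{\tau\csapd}Rf'_{\csapd}\nu^{sa}_{\chi^M}F \to \nu^{sa}_{\chi^N}Rf_{!!}F$ and $Rf_{\tau *}Rf'_{*}\nu^{sa}_{\chi^M}F \leftarrow \nu^{sa}_{\chi^N}Rf_{*}F$.

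Next I would apply the multi-Fourier--Sato transform $(\,\cdot\,)^{\wedge}$, which is an equivalence on the appropriate categories of multi-conic sheaves on subanalytic sites and sends $\nu^{sa}_{\chi}$ to $\mu^{sa}_{\chi}$. The exchange formulas in Propositions 3.2.9--3.2.11 of \cite{Pr13} give: for the morphism of vector bundles $f'$ the transform converts $Rf'_{\csapd}$ into ${}^tf'{}^{-1}$, and converts $Rf'_{*}$ into ${}^tf'{}^{!}$ up to the relative orientation twist; for the base-change projection $f_\tau$ it converts $Rf_{\tau\csapd}$ and $Rf_{\tau *}$ into $Rf_{\pi\csapd}$ and $Rf_{\pi *}$ respectively, compatibly with base change. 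Assembling these, the upper horizontal arrow of the specialization diagram is transformed into $Rf_{\pi\csapd}{}^tf'{}^{-1}\mu^{sa}_{\chi^M}F \to \mu^{sa}_{\chi^N}Rf_{!!}F$, and the lower one into $Rf_{\pi *}({}^tf'{}^{!}\mu^{sa}_{\chi^M}F \otimes \omega_{X/Y} \otimes \omega_{M/N}^{\otimes -1}) \leftarrow \mu^{sa}_{\chi^N}Rf_{*}F$. The vertical canonical arrows transform into canonical arrows on the microlocal side, so the resulting square commutes by naturality of the transform.

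For part (ii), since the multi-Fourier--Sato transform is an equivalence of categories, the fact that the specialization morphisms are isomorphisms under the hypotheses of Proposition \ref{prop:multimicro-proper}(ii) immediately yields that their Fourier--Sato transforms are isomorphisms, which is the desired conclusion. The main obstacle will be the orientation bookkeeping for the lower horizontal morphism: the twist $\omega_{X/Y} \otimes \omega_{M/N}^{\otimes -1}$ arises from the shift/twist in the Fourier--Sato exchange between $Rf'_{*}$ and ${}^tf'{}^{!}$, and one must carefully match the relative dualizing sheaf produced by the multi-conic Fourier--Sato (encoded by the fiber dimensions of $f'$ over $M$, which amount to $\dim X - \dim M - (\dim Y - \dim N)$ in the relevant multi-graded sense) with the external factor $\omega_{X/Y} \otimes \omega_{M/N}^{\otimes -1}$ appearing in the statement. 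This is a routine but delicate comparison, entirely parallel to the corresponding step in the classical proof of \cite{KS90}.
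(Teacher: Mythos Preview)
Your proposal is correct and follows essentially the same approach as the paper: the paper simply states that Proposition~\ref{prop:direct_image} (and its companion) is obtained by applying the multi-Fourier--Sato transform to the multi-specialization result of Proposition~\ref{prop:multimicro-proper}, combined with the exchange formulas of Propositions~3.2.9--3.2.11 of \cite{Pr13}. Your write-up is a faithful elaboration of that one-line proof, including the factorization through $f'$ and $f_\tau$ and the orientation bookkeeping, exactly parallel to the classical argument in \cite{KS90}.
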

Note that the conditions in (ii) of the above proposition hold if
the following conditions are satisfied:
\begin{enumerate}
\item $f$ is proper on $\mathrm{supp}(F)$.
\item $M_k = f^{-1}(N_k)$ holds for each $k \in \{1,2,\cdots,\ell\}$.
\item For each non-empty $J \subset \{1,\cdots,\ell\}$ and $j \in J$,
the submanifold $N_j$ is clean for the restriction map 
$f|_{\iota_{\chi^M_J}(M_j)}: \iota_{\chi^M_J}(M_j) \to \iota_{\chi^N_J}(N_j)$.
\end{enumerate}

\

\begin{prop}{\label{prop:inverse_image}} Let $F \in D^b(\CC_{Y_{sa}})$.
\begin{itemize}
\item[(i)] There exists a commutative diagram of canonical
morphisms
$$
\xymatrix{R{}^tf'_\csapd (\omega_{M/N}\otimes\imin {f_\pi}\mu^{sa}_{\chi^N} F)  \ar[d] \ar[r] & \mu^{sa}_{\chi^M}(\omega_{X/Y}\otimes \imin f F) \ar[d] \\
R{}^tf'_* f_\pi^!\mu^{sa}_{\chi^N} F & \mu^{sa}_{\chi^M}f^!F. \ar[l]}
$$
\item[(ii)]
The above morphisms are isomorphisms if $T_{\chi} f$ is smooth.
\end{itemize}
\end{prop}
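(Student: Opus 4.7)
The plan is to deduce this from the analogous statement for multi-specialization (Proposition 3.19 of \cite{HP24}, recalled just above) by applying the multi-Fourier-Sato transform to each side of the commutative square, following the template of Proposition 4.3.4 of \cite{KS90}. Since $\mu^{sa}_{\chi}$ is, by definition, the multi-Fourier-Sato transform of $\nu^{sa}_{\chi}$, the compatibilities of this transform with inverse and proper direct images under morphisms of multi-conic vector bundles, developed in Propositions 3.2.9, 3.2.10 and 3.2.11 of \cite{Pr13}, allow one to reinterpret each term of the specialization diagram in microlocal form.

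First I would factor $T_\chi f = f_\tau \circ f'$ and dualize each piece separately. The morphism of multi-conic bundles $f': S_{\chi^M} \to M \times_Y S_{\chi^N}$ has transpose ${}^tf': M \times_Y S^*_{\chi^N} \to S^*_{\chi^M}$, while the base-change map $f_\tau: M \times_Y S_{\chi^N} \to S_{\chi^N}$ dualizes to $f_\pi: M \times_Y S^*_{\chi^N} \to S^*_{\chi^N}$. Applying the multi-Fourier-Sato transform termwise and using these dictionaries, the functor $(T_\chi f)^{-1}$ on the tangent side turns into $R{}^tf'_{\csapd} \circ f_\pi^{-1}$ on the cotangent side (up to the orientation twist accounting for the dualization of the fibers), and $(T_\chi f)^!$ turns into $R{}^tf'_* \circ f_\pi^!$. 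Consequently the upper morphism of Proposition 3.19 of \cite{HP24} is transformed into the desired morphism with source $R{}^tf'_{\csapd}(\omega_{M/N}\otimes f_\pi^{-1}\mu^{sa}_{\chi^N} F)$, the lower one into the morphism with source $R{}^tf'_* f_\pi^! \mu^{sa}_{\chi^N} F$, and the two vertical maps become $\mu^{sa}_{\chi^M}(\omega_{X/Y}\otimes f^{-1}F)$ and $\mu^{sa}_{\chi^M} f^! F$; commutativity of the resulting square then follows from the functoriality of the Fourier-Sato transform applied to the already commutative specialization square.

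For part (ii), smoothness of $T_\chi f$ forces $f'$ to be a submersion on the relevant locus, which makes ${}^tf'$ a closed embedding there; under this hypothesis the natural arrows $R{}^tf'_{\csapd} \to R{}^tf'_*$ and $f_\pi^{-1}(-) \otimes \omega \to f_\pi^!(-)$ are both isomorphisms, and the conclusion of (ii) in Proposition 3.19 of \cite{HP24} transports to the isomorphism statement of (ii) here. The main obstacle I anticipate is the careful bookkeeping of orientation sheaves and dimension shifts when applying the Fourier-Sato transform to the product structure of $S_{\chi^M}$: the relative orientation $\omega_{S_{\chi^M}/S_{\chi^N}}$ factors along each index $j \in \Lambda$, and each factor must be separately matched against the corresponding piece of $\omega_{M/N}$ after dualization. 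Checking that the single-grading identifications of \cite{Pr13} extend compatibly to the multi-factor case associated with a forest-type partial order — and verifying that the twisting indeed lands on the side prescribed in the statement — is where the substantive work lies.
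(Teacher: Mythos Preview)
Your proposal is correct and matches the paper's own approach exactly: the paper states that this proposition (together with its companions) is obtained by applying multi-Fourier-Sato transforms to the specialization result (Proposition 3.19 of \cite{HP24}) combined with Propositions 3.2.9, 3.2.10, 3.2.11 of \cite{Pr13}. Your elaboration of the factorization $T_\chi f = f_\tau \circ f'$, the dualization to ${}^tf'$ and $f_\pi$, and the orientation bookkeeping is precisely the unpacking of that one-line argument.
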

Note that the condition of (ii) in the above proposition is satisfied if $f:X \to Y$ and
each $f|_{M_j}: M_j \to N_j$ ($j =1,2,\dots,\ell$) are smooth.

\

\begin{prop} Let $F \in D^b(\CC_{X_{sa}})$ and $G \in D^b(\CC_{Y_{sa}})$. There is a natural morphism
$$
\mu^{sa}_{\chi^M}F \boxtimes \mu^{sa}_{\chi^N}G \to \mu^{sa}_{\chi^{M \times N}}(F \boxtimes G),
$$
where $\chi^{M \times N}=\{M_1 \times N_1,\dots,M_\ell \times N_\ell\}$.
\end{prop}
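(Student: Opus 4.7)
The plan is to derive this morphism from the multi-specialization analogue (the preceding proposition) by applying the multi-Fourier--Sato transform, in the same fashion in which Propositions \ref{prop:direct_image} and \ref{prop:inverse_image} were obtained from their specialization counterparts.

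First I would identify the ambient multi-normal bundle. Working in local coordinates adapted to the type $(\Lambda,\preceq)$ as provided by the condition H2, the multi-normal deformation of $X\times Y$ along $\chi^{M\times N}$ factors as the fibre product over $M\times N$ of the multi-normal deformations of $X$ along $\chi^M$ and of $Y$ along $\chi^N$. Consequently one obtains canonical identifications
$$
S_{\chi^{M\times N}}\;\simeq\;S_{\chi^M}\times S_{\chi^N},\qquad
S^*_{\chi^{M\times N}}\;\simeq\;S^*_{\chi^M}\times S^*_{\chi^N},
$$
and the $\ell$-fold conic structure on the left matches the product of the two $\ell$-fold conic structures on the right.

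Then I would apply the multi-Fourier--Sato transform to the external-product morphism
$$
\nu^{sa}_{\chi^M}F\boxtimes \nu^{sa}_{\chi^N}G\;\longrightarrow\;\nu^{sa}_{\chi^{M\times N}}(F\boxtimes G)
$$
of the previous proposition. Using the identity $\mu^{sa}_\chi=(-)^{\wedge}\circ\nu^{sa}_\chi$, the right-hand side becomes $\mu^{sa}_{\chi^{M\times N}}(F\boxtimes G)$; for the left-hand side one invokes the compatibility of the multi-Fourier--Sato transform with external product, proved coordinate by coordinate along the $\ell$ cone directions by means of Propositions 3.2.9, 3.2.10 and 3.2.11 of \cite{Pr13}. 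This yields a natural isomorphism $(\nu^{sa}_{\chi^M}F\boxtimes \nu^{sa}_{\chi^N}G)^{\wedge}\simeq \mu^{sa}_{\chi^M}F\boxtimes \mu^{sa}_{\chi^N}G$ and hence the desired morphism.

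The step requiring most care is precisely this compatibility of the multi-Fourier--Sato transform with $\boxtimes$: since the transform acts fibrewise along each of the $\ell$ cone directions and the product family $\chi^{M\times N}$ inherits the common type of $\chi^M$ and $\chi^N$, one must verify that the $\ell$-fold transform on $S_{\chi^M}\times S_{\chi^N}$ factors as the exterior product of the two $\ell$-fold transforms. This is essentially formal once the bundle identification is in place, but the bookkeeping of the multi-conic structures and of the subanalytic-site refinements needed for $\nu^{sa}$ and $\mu^{sa}$ must be handled carefully. Naturality of the specialization morphism then transports to naturality of the resulting microlocalization morphism.
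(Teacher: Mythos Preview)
Your approach is exactly the one the paper takes: the paper states in one sentence that this proposition, together with Propositions \ref{prop:direct_image} and \ref{prop:inverse_image}, ``are obtained by applying multi-Fourier--Sato transforms to the previous results for multi-specialization combined with Proposition 3.2.9, 3.2.10, 3.2.11 of \cite{Pr13}''. Your write-up simply unpacks that sentence, including the identification $S^*_{\chi^{M\times N}}\simeq S^*_{\chi^M}\times S^*_{\chi^N}$ and the compatibility of the (multi-)Fourier--Sato transform with $\boxtimes$, so there is nothing to correct.
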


\

Let $\Delta_f$ be the graph of $f$. Consider the following diagram
\begin{equation}
\xymatrix{
X \times X \ar[r]^{f_1} & Y \times X \ar[r]^{f_2} & Y \times Y \\
\Delta_X \ar[u] \ar[r] & \Delta_f \ar[u] \ar[r] & \Delta_Y, \ar[u]
}
\end{equation}
where $f_1 = f \times \operatorname{id}_X$ and $f_2 = \operatorname{id}_Y \times f$.

Let $\chi_{X \times X} = \{M_1,\cdots,M_\ell\}$ and $\chi_{Y \times Y} = \{L_1,\cdots,L_\ell\}$ be families of closed submanifolds of forest type in $X \times X$ and $Y \times Y$ 
which satisfy the following conditions:
\begin{enumerate}
\item[A.] The induced types of $\chi_{X \times X}$ and $\chi_{Y \times Y}$ coincide
and $(f \times f)(M_k) \subset L_k$ for any $k \in \{1,2,\cdots,\ell\}$.
\item[B.] $\bigcap_j M_j = \Delta_X$ and $\bigcap_j L_j = \Delta_Y$.
\end{enumerate}
Set
$$
\chi_{\tYX} = \{N_1,\cdots, N_\ell\} := \{f_2^{-1}(L_1),\cdots, f_2^{-1}(L_\ell)\}.
$$
Then we have the following facts:
\begin{lem}
We have $\bigcap_{j} N_j = \Delta_f$ and
$\chi_{\tYX}$ is a family of closed submanifolds of forest type
in an open neighborhood of $\Delta_f$.
\end{lem}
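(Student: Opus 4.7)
The plan is to separate the two claims. For the first, a set-theoretic identity suffices: since preimages commute with intersections, $\bigcap_j N_j = f_2^{-1}(\bigcap_j L_j) = f_2^{-1}(\Delta_Y)$, and unwinding $f_2(y,x) = (y, f(x))$ gives $f_2^{-1}(\Delta_Y) = \{(y,x) \in Y \times X \mid y = f(x)\} = \Delta_f$.

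For the forest-type claim I will verify H1, H2 and H3 for $\chi_{\tYX}$ in some open neighborhood of $\Delta_f$. Two preliminary observations drive the argument. First, from $\bigcap_j L_j = \Delta_Y$ one immediately obtains $\Delta_Y \subseteq L_j$ for every $j$. Second, the map $f_2 = \mathrm{id}_Y \times f$ is transverse to $\Delta_Y$ at every point of $\Delta_f$, by a direct tangent-space computation; combined with $T L_j \supseteq T \Delta_Y$ along $\Delta_Y$, this propagates to transversality of $f_2$ to every $L_j$ on an entire open neighborhood of $\Delta_f$, which in turn ensures that each $N_j$ is a closed submanifold there.

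The core step is H2. Starting from an H2-coordinate system $(z_1,\dots,z_{2n})$ for $\chi_{Y \times Y}$ near $(y_0,y_0)\in\Delta_Y$ with $L_j=\{z_i=0 \mid i\in I_j\}$, one uses that $\bigcup_j I_j$ cuts out $\Delta_Y$ to relabel so that $\bigcup_j I_j \subseteq \{1,\dots,n\}$ and $\Delta_Y = \{z_1=\cdots=z_n=0\}$. Setting $\tilde z_i := z_i \circ f_2$ for $i \leq n$, the transversality observation forces the conormal pullback $T^*_{\Delta_Y}(Y\times Y) \to T^*_{\Delta_f}(Y\times X)$ induced by $df_2^*$ to be an isomorphism at $(y_0,x_0)$; hence $d\tilde z_1,\dots,d\tilde z_n$ are linearly independent there. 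Adjoining the local $X$-coordinates $(x_1,\dots,x_m)$ completes them to a local coordinate system on $Y\times X$ in which $N_j = \{\tilde z_i=0 \mid i\in I_j\}$, and the nesting condition on the $I_j$'s transfers verbatim from $\chi_{Y\times Y}$.

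Conditions H1 and H3 are then routine: for H1, distinctness of the $N_j$'s follows from distinctness of the $I_j$'s, and after shrinking the neighborhood one restricts to the component of each $N_j$ containing $\Delta_f$; for H3, inclusion relations among the $N_j$'s near $\Delta_f$ are controlled by inclusion of the $I_j$'s, so $\iota_{\chi_{\tYX}}(N_j)$ corresponds to $\iota_{\chi_{Y\times Y}}(L_j)$, and the required strict inequality transfers from H3 for $\chi_{Y\times Y}$. The one delicate point I expect is the transversality-based coordinate adaptation; once the coordinates $(\tilde z_i, x_j)$ are in hand, the rest is bookkeeping.
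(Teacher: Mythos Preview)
Your proposal is correct and follows essentially the same route as the paper: both arguments pull back the defining functions $\varphi_i$ (your $z_i$) for the $L_j$ through $f_2$ and use the fact that $f_2$ induces an isomorphism $(T^*_{\Delta_Y}(Y\times Y))_p \xrightarrow{\sim} (T^*_{\Delta_f}(Y\times X))_q$ (your transversality observation) to conclude that the pulled-back differentials remain independent, yielding an H2 coordinate system for $\chi_{\tYX}$ with the \emph{same} index sets $I_j$. The paper is terser---it stops once H2 is in hand, whereas you also spell out H1 and H3---but the key idea is identical.
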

\begin{proof}
The fact $\bigcap_{j} N_j = \Delta_f$ easily follows from the definition. Let $q \in \Delta_f$ and set $p = f_2(q)$. Note that $p \in \Delta_Y$ holds.
Then there exist real analytic functions 
$\varphi_1(y_1,y_2), \cdots, \varphi_m(y_1,y_2)$ ($m = \mathrm{dim} Y$) 
defined on an open neighborhood of $p$ in $Y \times Y$ satisfying
$$
d\varphi_1(p) \wedge d\varphi_2(p) \wedge \cdots \wedge d\varphi_m(p) \ne 0
$$
and a family of subsets $I_j \subset \{1,2,\cdots,m\}$ ($j=1,\cdots,\ell$) which satisfies the condition (b) of H2 such that
$$
L_k = \{(y_1,y_2) \in Y \times Y;\, \varphi_i(y_1,y_2) = 0\,\,(i \in I_k)\}\qquad (k=1,2,\cdots,\ell)
$$ 
hold near $p$. 

By noticing $\Delta_Y = \cap_j L_j \subset L_k$,  we get
$$
d\varphi_k(p) \in (T^*_{\Delta_Y}(Y \times Y))_p \qquad (k=1,2,\cdots,\ell).
$$
Furthermore, $f_2$ induces the isomorphism
$$
f_2': 
(T^*_{\Delta_Y}(Y \times Y))_p \xrightarrow{\,\,\,\sim\,\,\,}
(T^*_{\Delta_f}(Y \times X))_q.
$$
Hence we have obtained
$$
d(\varphi_1 \circ f_2)(q) \wedge d(\varphi_2\circ f_2)(q) \wedge \cdots \wedge 
d(\varphi_m \circ f_2)(q) \ne 0
$$
from which the later claim of the lemma follows because of
$$
N_k =f^{-1}_2(L_k) = \{(y,x) \in Y \times X;\, (\varphi_i \circ f_2)(y,x) = 0\,\,(i \in I_k)\}\qquad (k=1,2,\cdots,\ell)
$$
near $q$. This completes the proof.
\end{proof}
Note that $f_1$ and $f_2$ satisfy the conditions 1.~and 2.~
stated at the beginning of this section with respect to these families.
Note also that $S^*_{\chi_{X\times X}}$, $S^*_{\chi_{\tYX}}$ and
$S^*_{\chi_{Y \times Y}}$ are vector bundles over
$\Delta_X = X$, $\Delta_f = X$ and $\Delta_Y = Y$, respectively, and
the morphisms ${f_{1\pi}}: \Delta_X \underset{X \times X}{\times} S^*_{\chi_{\tYX}} \to 
S^*_{\chi_{\tYX}}$ 
and $f_2':  S_{\chi_{\tYX}} \to 
\Delta_f \underset{Y \times Y}{\times}	S_{\chi_{Y \times Y}}$
are isomorphic. In particular, we have
$$
S^*_{\chi_{\tYX}} \simeq 
\Delta_f \underset{Y}\times S^*_{\chi_{Y \times Y}}
\simeq
X \underset{Y}\times S^*_{\chi_{Y \times Y}}.
$$
Hence, in what follows, we set
$$
f_\pi = {f_{2 \pi}} \circ ({}^tf_2')^{-1}\qquad \text{and} \qquad {}^t 
f' = {}^t {f_1'} \circ (f_{1\pi})^{-1}. 
$$
This situation is visualized by the diagram
\begin{equation}
S^*_{\chi_{X \times X}} \xleftarrow{\,\,{}^t f'\,\,} 
S^*_{\chi_{\tYX}}
\simeq X \underset{Y}\times S^*_{\chi_{Y \times Y}}
\xrightarrow{\,\,f_\pi\,\,} S^*_{\chi_{Y \times Y}}.
\end{equation}

\

	Let $p_j$ (resp. $\tilde{p}_j$) be the $j$-th projection on $X \times X$ (resp. $Y \times X$).

\begin{df} Let $F \in D^b(\CC_{X_{sa}})$ and let $G \in D^b(\CC_{Y_{sa}})$. We define
\begin{eqnarray*}
\mu hom^{sa}_{\chi_{\tYX}}(F \to G):=\mu^{sa}_{\chi_{\tYX}}\rh(\tilde{p}_2{}^{-1}F,\tilde{p}_1^!G), \\
\mu hom^{sa}_{\chi_{\tYX}}(G \gets F):= (\mu^{sa}_{\chi_{\tYX}}\rh(\tilde{p}_1{}^{-1}G,\tilde{p}_2^!F))^a.
\end{eqnarray*}
In particular, if $X=Y$, $f=\id$ and $\chi_{Y \times Y}=\chi_{\tYX} = \chi_{X \times X}$, we set
$$
\mu hom^{sa}_{\chi_{X \times X}}(F,G) = \mu hom^{sa}_{\chi_{X \times X}}(F \to G) = \mu^{sa}_{\chi_{X \times Y}}\rh({p}_2{}^{-1}F,{p}_1^!G).
$$
\end{df}

Thanks to the above results for multi-specialization and multi-microlocalization the following results are obtained functorially exactly as in $\S$ 4.4 of \cite{KS90}.
Let $F \in D^b(\CC_{X_{sa}})$ and let $G \in D^b(\CC_{Y_{sa}})$. 

\begin{prop} One has commutative diagrams of morphisms
$$
\xymatrix{
R{^t}f'_\csapd\mu hom^{sa}_{\chi_{\tYX}}(F \to G)  \ar[r] \ar[d] & \mu hom^{sa}_{\chi_{X \times X}}(F,\imin f G \otimes \omega_{X/Y}) \ar[d] \\
R{}^tf'_*\mu hom^{sa}_{\chi_{\tYX}}(F \to G) & \mu hom^{sa}_{\chi_{X \times X}}(F,f^!G), \ar[l]
}
$$
$$
\xymatrix{
R{^t}f'_\csapd\mu hom^{sa}_{\chi_{\tYX}}(G \gets F)  \ar[r] \ar[d] & \mu hom^{sa}_{\chi_{X \times X}}(f^!G,F \otimes \omega_{X/Y}) \ar[d] \\
R{}^tf'_*\mu hom^{sa}_{\chi_{\tYX}}(G \gets F) & \mu hom^{sa}_{\chi_{X \times X}}(\imin f G,F), \ar[l]
}
$$
$$
\xymatrix{
Rf_{\pi\csapd}\mu hom^{sa}_{\chi_{\tYX}}(F \to G) \ar[r] \ar[d] & \mu hom^{sa}_{\chi_{Y \times Y}}(Rf_*F,G) \ar[d] \\
Rf_{\pi*}\mu hom^{sa}_{\chi_{\tYX}}(F \to G) & \mu hom^{sa}_{\chi_{Y \times Y}}(Rf_!F,G), \ar[l]
}
$$
$$
\xymatrix{
Rf_{\pi\csapd}\mu hom^{sa}_{\chi_{\tYX}}(G \gets F) \ar[r] \ar[d] & \mu hom^{sa}_{\chi_{Y \times Y}}(G,Rf_!F) \ar[d] \\
Rf_{\pi*}\mu hom^{sa}_{\chi_{\tYX}}(G \gets F) & \mu hom^{sa}_{\chi_{Y \times Y}}(G,Rf_*F). \ar[l]
}
$$
\end{prop}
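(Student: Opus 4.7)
The strategy, as in \S 4.4 of \cite{KS90}, is to factor $f$ through its graph as $X\times X \xrightarrow{f_1} Y\times X \xrightarrow{f_2} Y\times Y$ and then apply Proposition \ref{prop:inverse_image} to $f_1$ for the first two diagrams, and Proposition \ref{prop:direct_image} to $f_2$ for the last two. The setup in the excerpt already ensures that $f_{1\pi}$ and $f_2'$ are isomorphisms, so the composites ${}^tf' = {}^tf_1' \circ (f_{1\pi})^{-1}$ and $f_\pi = f_{2\pi} \circ ({}^tf_2')^{-1}$ appearing in the statement coincide with the maps produced by those propositions when applied to $f_1$ and $f_2$.

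For the first two diagrams, I apply Proposition \ref{prop:inverse_image} to $f_1$ evaluated on $\rh(\tilde p_2^{-1}F, \tilde p_1^! G)$ (resp.\ on $\rh(\tilde p_1^{-1}G, \tilde p_2^! F)$). Since $\tilde p_2 \circ f_1 = p_2$ and $\tilde p_1 \circ f_1 = f \circ p_1$, combined with the standard adjunction identity
\[
f_1^! \rh(\tilde p_2^{-1}F,\, \tilde p_1^! G) \;\simeq\; \rh(p_2^{-1}F,\, p_1^! f^! G),
\]
and its $f_1^{-1}$-variant twisted by $\omega_{X\times X/Y\times X} \simeq p_1^{-1}\omega_{X/Y}$, the right-hand columns of the first two diagrams are identified with $\mu hom^{sa}_{\chi_{X\times X}}(F,\, f^!G)$ and $\mu hom^{sa}_{\chi_{X\times X}}(F,\, f^{-1}G\otimes \omega_{X/Y})$; the antipodal $(-)^a$ in the definition of $\mu hom^{sa}_{\chi_{\tYX}}(G\gets F)$ takes care of the second diagram symmetrically. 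For the third and fourth diagrams, I apply Proposition \ref{prop:direct_image} to $f_2 = \operatorname{id}_Y \times f$, together with base-change and projection-formula identities of the form
\[
Rf_{2*}\, \rh(\tilde p_2^{-1}F,\, \tilde p_1^! G) \;\simeq\; \rh(\bar p_2^{-1} Rf_! F,\; \bar p_1^! G),
\]
and its $Rf_{2!!}$/$Rf_* F$ analogue (with $\bar p_j$ the projections of $Y\times Y$), to produce the right-hand columns $\mu hom^{sa}_{\chi_{Y\times Y}}(Rf_!F, G)$ and $\mu hom^{sa}_{\chi_{Y\times Y}}(Rf_*F, G)$, and dually for the $G\gets F$ variant. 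In every diagram, the vertical arrows are just the canonical $\csapd \to *$ comparison on the multi-conic side, so their commutativity with the horizontal morphisms is naturality of this comparison.

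\textbf{Main obstacle.} The principal bookkeeping point is aligning the orientation twists $\omega_{M/N}$ and $\omega_{X/Y}$ appearing in Propositions \ref{prop:direct_image} and \ref{prop:inverse_image} with the built-in $\tilde p_2^{-1}/\tilde p_1^!$ asymmetry in the definition of $\mu hom^{sa}_{\chi_{\tYX}}$. This amounts to identifying the normal bundles of $\Delta_X \subset X\times X$, $\Delta_f \subset Y\times X$ and $\Delta_Y \subset Y\times Y$ under the maps $f_1'$ and $f_2'$, a computation that proceeds exactly as in Kashiwara--Schapira since the multi-microlocalization functor was built to be functorial in the same way. Once this alignment is made, the naturality of the $\csapd \to *$ morphism against the constructions recalled from \S 2.5 of \cite{Pr13} and the Appendix of \cite{HP13} forces the commutativity of each square.
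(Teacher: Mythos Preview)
Your proposal is correct and is exactly the approach the paper intends: the paper does not spell out a proof but simply states that the result ``is obtained functorially exactly as in \S 4.4 of \cite{KS90}'', i.e.\ by factoring through the graph via $f_1$ and $f_2$ and then applying Propositions~\ref{prop:inverse_image} and~\ref{prop:direct_image} respectively, together with the standard adjunction, base-change and orientation-sheaf identifications you describe.
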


\begin{prop} One has canonical morphisms
\begin{eqnarray*}
	R{}^tf'_\csapd f_\pi^{-1}\mu hom^{sa}_{\chi_{Y \times Y}}(Rf_!F,G) & \to & \mu hom^{sa}_{\chi_{X \times X}}(F,\imin f G \otimes \omega_{X/Y}), \\
	R{}^tf'_\csapd f_\pi^{-1}\mu hom^{sa}_{\chi_{Y \times Y}}(G,Rf_*F) & \to & \mu hom^{sa}_{\chi_{X \times X}}(f^!G,F \otimes \omega_{X/Y}), \\
	Rf_{\pi \csapd}{}^tf'{}^{-1}\mu hom^{sa}_{\chi_{X \times X}}(F,f^!G) & \to & \mu hom^{sa}_{\chi_{Y \times Y}}(Rf_*F,G), \\
	Rf_{\pi \csapd}{}^tf'{}^{-1}\mu hom^{sa}_{\chi_{X \times X}}(\imin f G,F) & \to & \mu hom^{sa}_{\chi_{Y \times Y}}(G,Rf_!F).
\end{eqnarray*}
\end{prop}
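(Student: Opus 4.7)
The plan is to build each of the four morphisms as a three-step composition using the four commutative diagrams of the previous proposition. For each morphism one takes the lower horizontal arrow of some diagram, which produces a map into $Rh_*(\,\cdot\,)$ for $h\in\{f_\pi,{}^tf'\}$, transposes it across the corresponding adjunction $h^{-1}\dashv Rh_*$, applies the partner proper direct image functor (that is, $R{}^tf'_\csapd$ if the adjunction is for $f_\pi$, and $Rf_{\pi\csapd}$ if the adjunction is for ${}^tf'$), and finally composes with the upper horizontal arrow of another of the four diagrams.

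Concretely, to produce the first morphism the lower arrow of the third diagram reads
$$
\mu hom^{sa}_{\chi_{Y\times Y}}(Rf_!F,G)\;\longrightarrow\;Rf_{\pi*}\mu hom^{sa}_{\chi_{\tYX}}(F\to G).
$$
Transposing through the counit $f_\pi^{-1}Rf_{\pi*}\to\id$, applying $R{}^tf'_\csapd$ and composing with the upper arrow of the first diagram,
$$
R{}^tf'_\csapd\mu hom^{sa}_{\chi_{\tYX}}(F\to G)\;\longrightarrow\;\mu hom^{sa}_{\chi_{X\times X}}(F,\imin f G\otimes\omega_{X/Y}),
$$
yields the desired morphism. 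The second morphism is obtained in the same way by using the lower arrow of the fourth diagram and the upper arrow of the second, now with $(G\gets F)$ in place of $(F\to G)$. The third and fourth are constructed symmetrically: one starts from the lower arrow of the first (resp.~second) diagram, which gives a map into $R{}^tf'_*\mu hom^{sa}_{\chi_{\tYX}}(\,\cdot\,)$, transposes through the adjunction ${}^tf'{}^{-1}\dashv R{}^tf'_*$, applies $Rf_{\pi\csapd}$, and composes with the upper arrow of the third (resp.~fourth) diagram.

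The construction is entirely formal, so I do not anticipate a substantive technical obstacle. What requires attention is the correct pairing of $Rf_!$ with $\imin f(\,\cdot\,)\otimes\omega_{X/Y}$ and of $Rf_*$ with $f^!$, which is precisely the upper/lower asymmetry already encoded in the four diagrams of the previous proposition, as well as the compatibility of the natural transformation $R(\,\cdot\,)_\csapd\to R(\,\cdot\,)_*$ with the adjunction counits used to transpose, which ensures that the three-step compositions are functorial. The statement is the strict multi-microlocal analogue of Proposition 4.4.4 of \cite{KS90}, and the formal argument proceeds \emph{mutatis mutandis} as there.
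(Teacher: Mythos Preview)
Your proposal is correct and is precisely the construction the paper has in mind: the paper does not spell out a proof here but states that these results ``are obtained functorially exactly as in \S 4.4 of \cite{KS90}'', and your three-step composition (adjoint of a lower arrow, apply the partner direct image, compose with an upper arrow) is exactly how Proposition 4.4.4 is derived from Proposition 4.4.3 in \cite{KS90}. The pairings and the adjunction bookkeeping you describe are the right ones.
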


\begin{prop} Let $F_1,F_2 \in D^b(\CC_{X_{sa}})$ and $G_1,G_2 \in D^b(\CC_{Y_{sa}})$. One has commutative diagrams of morphisms
$$
\xymatrix{
R{}^tf'_\csapd f_\pi^{-1}\mu hom^{sa}_{\chi_{Y \times Y}}(G_1,G_2)  \ar[r] \ar[d] & \mu hom^{sa}_{\chi_{X \times X}}(f^!G_1,\imin f G_2 \otimes \omega_{X/Y}) \ar[d] \\
R{}^tf'_*f_\pi^!(\mu hom^{sa}_{\chi_{Y \times Y}}(G_1,G_2) \otimes \omega_{X/Y}^{\otimes-1}) & \mu hom^{sa}_{\chi_{X \times X}}(\imin fG_1 \otimes \omega_{X/Y},f^! G_2), \ar[l]
}
$$
$$
\xymatrix{
Rf_{\pi \csapd}{}^tf'{}^{-1}\mu hom^{sa}_{\chi_{X \times X}}(F_1,F_2) \ar[r] \ar[d] & \mu hom^{sa}_{\chi_{Y \times Y}}(Rf_*F_1,Rf_!F_2) \ar[d] \\
Rf_{\pi*}({}^tf'{}^!\mu hom^{sa}_{\chi_{X \times X}}(F_1,F_2) \otimes \omega_{X/Y}) & \mu hom^{sa}_{\chi_{Y \times Y}}(Rf_!F_1,Rf_*F_2). \ar[l]
}
$$
\end{prop}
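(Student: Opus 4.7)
The plan is to mimic the proof of Proposition 4.4.5 of \cite{KS90} essentially verbatim, using the fact that all the categorical input we need has already been packaged in the two immediately preceding propositions of the excerpt, together with the $(f_\pi^{-1}, Rf_{\pi*})$ and $({}^tf'{}^{-1}, R{}^tf'_*)$ adjunctions (and their proper-direct-image analogues built on $\csapd$). The strategy is to manufacture, on the intermediate space $S^*_{\chi_{\tYX}} \simeq X \times_Y S^*_{\chi_{Y \times Y}}$, a bridge object via $\mu hom^{sa}_{\chi_{\tYX}}$ and then push/pull it to both sides.

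Concretely, for the first diagram I would start from $\mu hom^{sa}_{\chi_{\tYX}}(f^!G_1 \to G_2) = \mu^{sa}_{\chi_{\tYX}}\rh(\tilde{p}_2^{-1}f^!G_1, \tilde{p}_1^! G_2)$. Because $\tilde{p}_1|_{\Delta_f} = f$ and $\tilde{p}_2|_{\Delta_f} = \mathrm{id}_X$, applying the commutative diagrams of the preceding proposition to $F = f^! G_1$ and $G = G_2$ yields canonical maps from $Rf_{\pi*}$ (resp.\ $Rf_{\pi\csapd}$) of this bridge into $\mu hom^{sa}_{\chi_{Y\times Y}}(Rf_!f^!G_1, G_2)$, and then via the counit $Rf_! f^! \to \mathrm{id}$ into $\mu hom^{sa}_{\chi_{Y\times Y}}(G_1, G_2)$; symmetrically one produces maps from $R{}^tf'_\csapd$ (resp.\ $R{}^tf'_*$) of the bridge into $\mu hom^{sa}_{\chi_{X\times X}}(f^!G_1, \imin f G_2 \otimes \omega_{X/Y})$ or $\mu hom^{sa}_{\chi_{X\times X}}(\imin f G_1 \otimes \omega_{X/Y}, f^! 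G_2)$ depending on which variant ($\to$ or $\gets$) of $\mu hom^{sa}_{\chi_{\tYX}}$ one chooses. The horizontal maps of the target diagram are then obtained by composing with the adjunction arrows, and the square commutes because every corner is the image of one and the same object on $S^*_{\chi_{\tYX}}$; commutativity reduces to naturality of the canonical $\csapd \to {*}$ comparison for both ${}^tf'$ and $f_\pi$, which is established in Propositions \ref{prop:direct_image} and \ref{prop:inverse_image}.

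The second diagram is proved by the dual construction: take $F_1, F_2 \in D^b(\CC_{X_{sa}})$, form the bridge $\mu hom^{sa}_{\chi_{\tYX}}(F_1 \to Rf_* F_2)$ (and its $\gets$ counterpart), apply the first two diagrams of the preceding proposition to pass to $\mu hom^{sa}_{\chi_{X\times X}}(F_1, F_2)$ on one side and to $\mu hom^{sa}_{\chi_{Y\times Y}}(Rf_*F_1, Rf_!F_2)$ on the other, and use the unit $F_2 \to f^! Rf_* F_2$ (together with the proper base change/adjunction for $Rf_!$) to clean up. Commutativity is again a formal consequence of naturality of the $\csapd \to {*}$ comparison.

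I expect the main obstacle to be the bookkeeping of the relative orientation sheaves. Each application of Proposition \ref{prop:inverse_image} introduces a factor $\omega_{X/Y}$ and each application of Proposition \ref{prop:direct_image} introduces a factor $\omega_{M/N}^{\otimes -1}$, and one must verify that these twists telescope correctly against the normalization built into $f^!$ and the identification $S^*_{\chi_{\tYX}} \simeq X \times_Y S^*_{\chi_{Y\times Y}}$ used to factor $f_\pi$ and ${}^tf'$. Once this bookkeeping is handled, nothing beyond the formal yoga of \cite{KS90}$\S$4.4 is required.
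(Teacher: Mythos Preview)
Your proposal is correct and matches the paper's approach exactly: the paper gives no detailed proof at all, stating only that ``the following results are obtained functorially exactly as in \S4.4 of \cite{KS90}'', and your plan to reproduce the argument of Proposition~4.4.5 of \cite{KS90} using the two preceding propositions as categorical input is precisely that. Your remark about tracking the relative orientation sheaves is a legitimate bookkeeping point, but it is the same bookkeeping already handled in \cite{KS90} and is not an obstacle.
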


\

Let $q_X:X \to S$, $q_Y:Y \to S$ be two morphisms, and denote by $p_1,p_2$ the first and second projection on $Y \times X$ and $Y \times_S X$. 
Assume that $X \times_S Y$ is a closed submanifold in $X \times Y$, and
$j:X \times_S Y \hookrightarrow X \times Y$ denotes a closed embedding.
We also define the closed embeddings
$$
(X \times_S Y)^2 \xrightarrow{\,\,j_1\,\,} (X \times Y) \times (X \times_S Y)
\xrightarrow{\,\,j_2\,\,} (X \times Y)^2 = (X \times X) \times (Y \times Y),
$$
and set $\tilde{j} = j_2 \circ j_1$.

Let $\chi_{X \times X} = \{M_1,\cdots,M_\ell\}$ and 
$\chi_{Y\times Y} = \{N_1,\cdots, N_\ell\}$ 
be families of closed submanifolds of forest type
in $X \times X$ and $Y \times Y$, respectively. 
We also assume that types induced from $\chi_{X \times X}$ and $\chi_{Y \times Y}$ 
coincide and that $\bigcap_j M_j = \Delta_X$ and $\bigcap_j N_j = \Delta_Y$ hold.
Define the family in $(X \times_S Y)^2$ 
by
$$
\chi_{(X \times_S Y)^2}=
\{\tilde{j}^{-1}(M_1 \times N_1), 
\cdots, \tilde{j}^{-1}(M_\ell \times N_\ell)\}.
$$
For which we assume that each $\tilde{j}^{-1}(M_k \times N_k)$ 
becomes a closed submanifold and that
$\chi_{(X\times_S Y)^2}$ is of forest type.

\begin{prop} 
Let $F_1,F_2 \in D^b(\CC_{X_{sa}})$ and $G_1,G_2 \in D^b(\CC_{Y_{sa}})$. 
Under the above situation, one has a morphism
$$
R({}^tj')_{\csapd} \left(\mu hom^{sa}_{\chi_{X \times X}}(F_1,F_2) \boxtimes_S \mu hom^{sa}_{\chi_{Y \times Y}}(G_1,G_2)\right) 
\to \mu hom^{sa}_{\chi_{(X \times_S Y)^2}}(F_1 \boxtimes_S G_1, F_2 \boxtimes_S G_2).
$$
\end{prop}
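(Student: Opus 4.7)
The strategy mirrors the functorial argument of Proposition~4.4.11 in \cite{KS90} for the classical $\mu hom$, with the additional ingredient of a transport step along the closed embedding $\tilde j$. The idea is to first construct the desired morphism on $(X \times X) \times (Y \times Y)$ by combining the external product compatibility of $\rh$ with the $\mu^{sa}$-external product morphism proved earlier in this section, and then to descend the result to $(X \times_S Y)^2$ via $\tilde j$ using Proposition~\ref{prop:inverse_image}.

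First, unfolding the definition of $\mu hom^{sa}$ gives
$$
\mu hom^{sa}_{\chi_{X \times X}}(F_1,F_2) \boxtimes_S \mu hom^{sa}_{\chi_{Y \times Y}}(G_1,G_2) \simeq \mu^{sa}_{\chi_{X \times X}}\rh(p_2^{-1}F_1, p_1^!F_2) \boxtimes_S \mu^{sa}_{\chi_{Y \times Y}}\rh(\tilde p_2^{-1}G_1, \tilde p_1^!G_2).
$$
Next I apply the standard natural morphism $\rh(A_1,A_2) \boxtimes \rh(B_1,B_2) \to \rh(A_1 \boxtimes B_1, A_2 \boxtimes B_2)$, and, after the reordering of middle factors that identifies $(X \times X) \times (Y \times Y)$ with $(X \times Y)^2$, use the identifications
$$
p_2^{-1}F_1 \boxtimes \tilde p_2^{-1}G_1 \simeq \pi_2^{-1}(F_1 \boxtimes G_1), \qquad p_1^!F_2 \boxtimes \tilde p_1^!G_2 \simeq \pi_1^!(F_2 \boxtimes G_2),
$$
where $\pi_1,\pi_2$ are the two projections $(X \times Y)^2 \to X \times Y$. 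Then the $\mu^{sa}$-external product morphism yields a morphism landing in $\mu^{sa}_{\chi^{M \times N}}\rh(\pi_2^{-1}(F_1 \boxtimes G_1), \pi_1^!(F_2 \boxtimes G_2)) = \mu hom^{sa}_{\chi^{M \times N}}(F_1 \boxtimes G_1, F_2 \boxtimes G_2)$, where $\chi^{M \times N} = \{M_k \times N_k\}_{k}$.

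For the last step, I transport the resulting object to $(X \times_S Y)^2$ along $\tilde j$. By hypothesis the pulled-back family $\chi_{(X \times_S Y)^2} = \{\tilde j^{-1}(M_k \times N_k)\}$ is of forest type and each $\tilde j^{-1}(M_k \times N_k)$ is a closed submanifold, so the microlocal maps ${}^t\tilde j'$ and $\tilde j_\pi$ attached to the closed embedding $\tilde j$ are well defined. Applying Proposition~\ref{prop:inverse_image} to $\tilde j$ with the identification $\tilde j^{-1}(F \boxtimes G) = F \boxtimes_S G$ (and noting that the relative orientation sheaves for the closed embedding combine to restrict the $\rh$ appropriately) yields a canonical arrow
$$
R({}^t\tilde j')_{\csapd}\!\left(\mu hom^{sa}_{\chi^{M \times N}}(F_1 \boxtimes G_1, F_2 \boxtimes G_2)\right) \to \mu hom^{sa}_{\chi_{(X \times_S Y)^2}}(F_1 \boxtimes_S G_1, F_2 \boxtimes_S G_2).
$$
Composing all three steps produces the claimed morphism.

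The principal obstacle is bookkeeping: one must carefully identify $(X\times X)\times(Y\times Y)$ with $(X\times Y)^2$ so that the projections $p_i$, $\tilde p_i$, $\pi_i$, the universal family $\chi^{M\times N}$, and the closed embeddings $j_1$, $j_2$ all line up after the swap of middle factors. A secondary technical point is verifying that the smoothness/cleanness hypotheses in Proposition~\ref{prop:inverse_image} are met for the closed embedding $\tilde j$; this should reduce to the two standing assumptions in the statement, namely that $\tilde j^{-1}(M_k \times N_k)$ is a closed submanifold and that $\chi_{(X \times_S Y)^2}$ is of forest type.
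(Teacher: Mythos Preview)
Your approach is correct and matches the paper's: the proposition is stated with no proof in the paper beyond the blanket remark that these results ``are obtained functorially exactly as in \S 4.4 of \cite{KS90}'', and your three-step outline (external product for $\rh$, the $\mu^{sa}$-external product morphism, then transport along the closed embedding $\tilde j$ via Proposition~\ref{prop:inverse_image}) is precisely that route. One small correction: your secondary technical worry about smoothness/cleanness is misplaced, since Proposition~\ref{prop:inverse_image}(i) furnishes the canonical morphism $R({}^tf')_{\csapd}(\omega_{M/N}\otimes f_\pi^{-1}\mu^{sa}_{\chi^N}F)\to\mu^{sa}_{\chi^M}(\omega_{X/Y}\otimes f^{-1}F)$ unconditionally; smoothness of $T_\chi f$ is only required for the isomorphism in part~(ii), which you do not need here. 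Your handwave about the orientation sheaves is in fact correct: $\omega_{M/N}$ in this case is $\omega_{\Delta_{X\times_SY}/\Delta_{X\times Y}}\simeq\omega_{X\times_SY/X\times Y}$, and on the right-hand side the passage from $\tilde j^{-1}\pi_1^!$ to $\tilde\pi_1^!j^{-1}$ absorbs exactly the same factor, so the two cancel and one lands on $F_2\boxtimes_SG_2$ as required.
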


\

Let $Z$ be another manifold and
let $\chi_{(X\times Y)^2} = \{M_1,\cdots, M_\ell\}$
(resp. $\chi_{(Y \times Z)^2} = \{N_1,\cdots, N_\ell\}$,
$\chi_{(X \times Z)^2} = \{L_1,\cdots, L_\ell\}$)
be a family of closed submanifolds in $(X\times Y)^2$ (resp.
$(Y \times Z)^2$, $(X \times Z)^2$) of forest type.
Assume that these families have the same induced type and
that $\bigcap_j M_j = \Delta_{X \times Y}$,
$\bigcap_j N_j = \Delta_{Y \times Z}$ and
$\bigcap_j L_j = \Delta_{X \times Z}$ hold.

Let us define the family $\chi_{(X\times Y \times Z)^2}$ of submanifolds
in $(X \times Y \times Z)^2$ as follows:
Let us consider the canonical projections $p_{12,2}: (X \times Y) \to Y$ and
$p_{23,2}:(Y \times Z) \to Y$. Then, by noticing 
$$
j: X \times Y \times Z
= (X \times Y) \times_Y (Y \times Z)  \hookrightarrow (X \times Y) \times (Y \times Z),
$$
we define the closed embeddings
$$
(X \times Y \times Z)^2 
\xrightarrow{\,\,j_1\,\,}
((X \times Y) \times (Y \times Z)) \times (X \times Y \times Z)
\xrightarrow{\,\,j_2\,\,}
((X \times Y) \times (Y \times Z))^2 =
(X \times Y)^2 \times (Y \times Z)^2
$$
and set $\tilde{j} = j_2 \circ j_1$.
We define the family in $(X \times Y \times X)^2$
$$
\chi_{(X\times Y \times Z)^2}
=\{\tilde{j}^{-1}(M_1 \times N_1), \cdots, \tilde{j}^{-1}(M_\ell \times N_\ell)\}.
$$
Now assume that all the $\tilde{j}^{-1}(M_k \times N_k)$ are closed submanifolds 
and that $\chi_{(X\times Y \times Z)^2}$ is of forest type.
Furthermore, we assume that
$$
\tilde{p}_{13}(\tilde{j}^{-1}(M_k \times N_k)) \subset L_k\qquad (k=1,2,\cdots,\ell),
$$
where $\tilde{p}_{13}: (X \times Y \times Z)^2 \to (X \times Z)^2$ is the canonical projection.

Note that the projection $p_{13}:X \times Y \times Z \to X \times Z$ induces the map
$$
S^*_{\chi_{(X \times Z)^2}} \xleftarrow{\,\,p_{13\pi}\,\,}
(X\times Y \times Z) \underset{X\times Z}{\times} S^*_{\chi_{(X \times Z)^2}}
\xrightarrow{\,\,{}^t p'_{13}\,\,}
S^*_{\chi_{(X\times Y \times Z)^2}}.
$$
\begin{oss}
There is no canonical projection
$$
q_{13}:S^*_{\chi_{(X\times Y \times Z)^2}} \to 
S^*_{\chi_{(X \times Z)^2}}
$$
under a current general situation. Hence we cannot simplify the left-side of the formula below anymore.
\end{oss}
\begin{prop} Let $F_1,G_1 \in D^b(\CC_{(X \times Y)_{sa}})$ and 
	$F_2,G_2 \in D^b(\CC_{(Y \times Z)_{sa}})$. Under the above situation, there is a canonical morphism
\begin{eqnarray*}
	& R(p_{13\pi})_{\csapd} ({}^tp'_{13})^{-1}R({}^tj')_{\csapd}\left(\mu hom^{sa}_{\chi_{(X \times Y)^2}}(F_1,G_1) 
\boxtimes_{Y} \mu hom^{sa}_{\chi_{(Y \times Z)^2}}(F_2,G_2)\right) & \\
& \to \mu hom^{sa}_{\chi_{(X \times Z)^2}}(F_1 \circ F_2,G_1 \circ G_2). &
\end{eqnarray*}
\end{prop}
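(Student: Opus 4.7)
The plan is to construct the morphism in two steps, by composing the external-product morphism for $\mu hom$ over the base $Y$ (from the immediately preceding proposition) with a direct-image morphism for $\mu hom$ attached to $p_{13}$ (the analog, for the projection $p_{13}$, of the second diagram of the Proposition two above, itself an instance of Proposition~\ref{prop:direct_image}). The argument parallels the classical proof of Proposition~4.4.11 in \cite{KS90}.

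\textbf{Step 1 (External product over $Y$).} Apply the preceding external-product proposition with the triple $(X,Y,S)$ there replaced by $(X\times Y,\,Y\times Z,\,Y)$ here: the fibered product $(X\times Y)\times_Y(Y\times Z)$ is canonically identified with $X\times Y\times Z$ and is a closed submanifold of $(X\times Y)\times(Y\times Z)$ via the diagonal $j$, while by hypothesis $\chi_{(X\times Y\times Z)^2}$ consists of closed submanifolds of forest type with the same induced type as $\chi_{(X\times Y)^2}$ and $\chi_{(Y\times Z)^2}$. This yields a canonical morphism
\begin{equation*}
R({}^t j')_{\csapd}\bigl(\mu hom^{sa}_{\chi_{(X\times Y)^2}}(F_1,G_1)\boxtimes_Y \mu hom^{sa}_{\chi_{(Y\times Z)^2}}(F_2,G_2)\bigr) \to \mu hom^{sa}_{\chi_{(X\times Y\times Z)^2}}(F_1\boxtimes_Y F_2,\,G_1\boxtimes_Y G_2).
\end{equation*}

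\textbf{Step 2 (Direct image by $p_{13}$).} Apply the functor $R(p_{13\pi})_{\csapd}({}^t p'_{13})^{-1}$ to the morphism of Step 1 (this is functorial), and then compose with the $\mu hom$-direct-image morphism under $p_{13}:X\times Y\times Z\to X\times Z$, that is the analog, for $p_{13}$ and the families $\chi_{(X\times Y\times Z)^2}$, $\chi_{(X\times Z)^2}$, of the second diagram of the Proposition two above. Concretely, this step provides
\begin{equation*}
R(p_{13\pi})_{\csapd}({}^t p'_{13})^{-1}\mu hom^{sa}_{\chi_{(X\times Y\times Z)^2}}(F,G)\to \mu hom^{sa}_{\chi_{(X\times Z)^2}}\bigl(R(p_{13})_*F,\,R(p_{13})_!G\bigr),
\end{equation*}
whose existence is guaranteed by the assumed inclusions $\tilde{p}_{13}(\tilde{j}^{-1}(M_k\times N_k))\subset L_k$: these are exactly what ensures the compatibility between $\chi_{(X\times Y\times Z)^2}$ and $\chi_{(X\times Z)^2}$ required to apply Proposition~\ref{prop:direct_image} to $p_{13}$. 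Substituting $F=F_1\boxtimes_Y F_2$ and $G=G_1\boxtimes_Y G_2$ and recalling the kernel-composition conventions recognizes the right-hand side as $\mu hom^{sa}_{\chi_{(X\times Z)^2}}(F_1\circ F_2,\,G_1\circ G_2)$. Composing the morphism of Step~1 (after applying $R(p_{13\pi})_{\csapd}({}^t p'_{13})^{-1}$) with that of Step~2 gives the claimed canonical morphism.

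\textbf{Main obstacle.} The substantive point is the bookkeeping for the various operations ${}^t(\cdot)'$, $(\cdot)_\pi$ and $(\cdot)_{\csapd}$ attached to the two embeddings $\tilde{j}$ and to the projection $p_{13}$, carried out with respect to the four families of submanifolds in play. One must verify that conditions A and B stated before the preceding external-product proposition, and the hypothesis needed to run Proposition~\ref{prop:direct_image} on $p_{13}$, are all satisfied here; the hypotheses placed on $\chi_{(X\times Y\times Z)^2}$ — namely that $\tilde{j}^{-1}(M_k\times N_k)$ is a closed submanifold, that the family is of forest type, and that $\tilde{p}_{13}(\tilde{j}^{-1}(M_k\times N_k))\subset L_k$ — are precisely what makes both steps simultaneously available. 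Once these compatibilities are in hand the construction is formal, following the template of \cite[Prop.~4.4.11]{KS90}.
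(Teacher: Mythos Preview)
Your proposal is correct and follows the same route the paper intends: the paper does not give a separate proof but states that all these $\mu hom$ propositions ``are obtained functorially exactly as in \S4.4 of \cite{KS90}'', and your two-step argument (external product over $Y$, then direct image under $p_{13}$) is precisely the content of \cite[Prop.~4.4.11]{KS90} transported to the multi-microlocal setting. Your identification of the role of the hypotheses on $\chi_{(X\times Y\times Z)^2}$ is also accurate.
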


%

\

Now we apply these results to the functor $\mu hom^{sa}_{\widehat{\chi}}(F_1,\dots,F_\ell;G)$.
Let $(\Lambda, \preceq)$ be of forest type, and let
$\widehat{\chi}$ denote the universal family $\widehat{\chi}(X, (\Lambda,\preceq))$.
Recall that we have
$$
S^*_{\widehat{\chi}} = T^*X \times \cdots \times T^*X.
$$
We also define the vector bundle $S^*_{\widehat{\chi},X}$ over $X$ by
$$
S^*_{\widehat{\chi},X} = X \underset{X^\ell}{\times} S^*_{\widehat{\chi}} = T^*X \xtimes \cdots \xtimes T^*X,
$$
and denote by $i_{\widehat{\chi}}$ the closed embedding 
$
S^*_{\widehat{\chi},X} \hookrightarrow S^*_{\widehat{\chi}}
$. Note that
we sometimes write $S^*_{\widehat{\chi},\Delta}$ instead of $S^*_{\widehat{\chi},X}$
in this paper.

Let $p_2: (X^2)^\ell \to X^\ell$ (resp.  $p_1: (X^2)^\ell \to X^\ell$)
denote the canonical projection associated with
the one $X^2 = X \times X \ni (x,y) \mapsto y \in X$
(resp.  $X^2 = X \times X \ni (x,y) \mapsto x \in X$), and let
$i_\Delta: X \to X^\ell$ designate
the diagonal embedding, i.e., $i_\Delta(x) = (x, \cdots, x)$.
\begin{df}
Let $F_1,\dots,F_\ell$ and $G$ be in $D^b(\CC_{X_{sa}})$. 
We define the functor
\begin{equation}
\mu hom^{sa}_{\widehat{\chi}}(F_1,\dots,F_\ell;G) =
i_{\widehat{\chi}}^{-1}\mu^{sa}_{\widehat{\chi}}(\rh(p_2^{-1}(F_1 \boxtimes \cdots \boxtimes F_\ell),\,p_1^!Ri_{\Delta*}G)).
\end{equation}
\end{df}
\begin{oss}
For $F_1,\dots,F_\ell$ and $G$ be in $D^b(\CC_{X})$, we also define the functor
$\mu hom_{\widehat{\chi}}(F_1,\dots,F_\ell;G)$ in the same way, that is,
\begin{equation}
\mu hom_{\widehat{\chi}}(F_1,\dots,F_\ell;G) =
i_{\widehat{\chi}}^{-1}\mu_{\widehat{\chi}}(\rh(p_2^{-1}(F_1 \boxtimes \cdots \boxtimes F_\ell),\,p_1^!Ri_{\Delta*}G)).
\end{equation}
This functor is compatible with $\mu hom_{\widehat{\chi}}^{sa}$ because we have
$$
\mu hom_{\widehat{\chi}}(F_1,\dots,F_\ell;G) \simeq \rho^{-1}\mu hom_{\widehat{\chi}}(R\rho_*F_1,\dots,R\rho_*F_\ell;R\rho_*G).
$$
This is a consequence of the following facts (see \cite{Pr08} and \cite{Pr13} for details):
\begin{itemize}
\item $R\rho_*$ commutes with $p_1^!$ and $Ri_{\Delta*}$,
\item $\rho^{-1}$ commutes with $p_2^{-1}$, $\boxtimes$ and the Fourier-Sato transform,
\item $R\rho_*\rh(\rho^{-1}(\cdot),\cdot) \simeq \rh(\cdot,R\rho_*(\cdot))$ and $\rho^{-1}R\rho_* \simeq \id$,
\item the compatibility $\rho^{-1}\nu^{sa}_\chi R\rho_* \simeq \nu_\chi$ following from the stalk formula of \cite{HP13}.
\end{itemize}
Hence, the subsequent formulas for $\mu hom_{\widehat{\chi}}^{sa}$ in this subsection
also hold for $\mu hom_{\widehat{\chi}}$.
\end{oss}

Let us establish several properties of the functor, which are needed to define an action
of a multi-microlocal operator.

\begin{lem}{\label{lem:fundamental_restriction_ok}}
Let $\chi = \{M_1,\cdots, M_\ell\}$ be a family of closed subsets of forest type in $X$, and let $Z$ be a closed subanalytic subset in $X$ and 
$F \in D^b(\CC_{X_{sa}})$. Set $M = M_1 \cap \cdots \cap M_\ell$.
Assume $F_{X \setminus Z} \simeq 0$. Then we have
$$
Ri_{*} i^{-1} \mu^{sa}_\chi(F) \simeq \mu^{sa}_\chi(F),
$$
where $i: (M \cap Z) \times_M S^*_\chi \hookrightarrow S^*_\chi$ is a closed embedding.
\end{lem}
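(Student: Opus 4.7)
The plan is to reformulate the conclusion as a support condition and then transfer it across the multi-Fourier--Sato transform. For a closed subanalytic embedding $i\colon A\hookrightarrow Y$ and a subanalytic sheaf $\mathcal G$ on $Y$, the distinguished triangle
$$
\mathcal G_{Y\setminus A}\to \mathcal G\to Ri_* i^{-1}\mathcal G\xrightarrow{+1}
$$
shows that $Ri_* i^{-1}\mathcal G\simeq\mathcal G$ if and only if $\supp\mathcal G\subset A$. Applied to $\mathcal G=\mu^{sa}_\chi(F)$ with $A=(M\cap Z)\times_M S^*_\chi$, the lemma becomes the support statement $\supp(\mu^{sa}_\chi(F))\subset (M\cap Z)\times_M S^*_\chi$.

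To verify this support statement, I would use that, by definition, $\mu^{sa}_\chi(F)$ is the multi-Fourier--Sato transform of the multi-conic subanalytic sheaf $\nu^{sa}_\chi(F)$ on $S_\chi$. Both $\tau\colon S_\chi\to M$ and $\pi\colon S^*_\chi\to M$ are vector bundles over the same base $M$, and the multi-Fourier--Sato kernel is supported in the fibered product $S_\chi\times_M S^*_\chi$. Consequently the transform preserves base-supports: it sends sheaves supported in $\tau^{-1}(M')$ to sheaves supported in $\pi^{-1}(M')$ for any closed subanalytic $M'\subset M$. Thus it is enough to establish
$$
\supp(\nu^{sa}_\chi(F))\subset \tau^{-1}(M\cap Z)=(M\cap Z)\times_M S_\chi.
$$

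For this, I would invoke the description of $\nu^{sa}_\chi$ via the multi-normal deformation $\widetilde X_\chi$ from \cite{HP13}: one has a base projection $p\colon\widetilde X_\chi\to X$, an open subanalytic subset $\Omega\subset\widetilde X_\chi$, a closed embedding of the zero section $s\colon S_\chi\hookrightarrow \widetilde X_\chi$ (with $p\circ s=\tau$), and a canonical isomorphism $\nu^{sa}_\chi(F)\simeq s^{-1}Rj_*(p^{-1}F|_\Omega)$ with $j\colon\Omega\hookrightarrow\widetilde X_\chi$. Since $F_{X\setminus Z}\simeq 0$, the sheaf $p^{-1}F|_\Omega$ is supported in the closed subanalytic subset $p^{-1}(Z)\cap\Omega$, and hence $Rj_*(p^{-1}F|_\Omega)$ is supported in the closure of $p^{-1}(Z)\cap\Omega$ in $\widetilde X_\chi$. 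Continuity of $p$ together with closedness of $Z$ then yield that the intersection of this closure with $s(S_\chi)$ maps under $\tau$ into $M\cap Z$, which after applying $s^{-1}$ gives the desired support condition.

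The main obstacle I expect is a bookkeeping one: the base-preserving property of the multi-Fourier--Sato transform must be justified in the subanalytic framework of \cite{Pr13}, tracking the compatibilities of $\rho^{-1}$, $R\rho_*$ and the transform itself. These compatibilities are already present in loc.\ cit.\ (and are reused in the remark following the definition of $\mu hom_{\widehat\chi}$ above), so the argument ultimately reduces to the classical base preservation of the Fourier--Sato transform combined with the stalk-type vanishing provided by the multi-normal deformation.
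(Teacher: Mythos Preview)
Your argument is correct and follows essentially the same route as the paper: both reduce the claim to a support condition on $\nu^{sa}_\chi(F)$ over the base $M$ and then verify it via the multi-normal deformation $\tilde p\colon\widetilde X_\chi\to X$ together with the hypothesis $F_{X\setminus Z}\simeq 0$. The only cosmetic difference is that the paper leaves the Fourier--Sato base-preservation implicit (simply stating ``it suffices to show $R\Gamma_{\tau_\chi^{-1}(M\setminus Z)}\nu^{sa}_\chi(F)\simeq 0$'') and checks this vanishing by a direct section computation on $\widetilde X_\chi$, whereas you phrase the same step as a closure-of-support argument for $Rj_*$.
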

\begin{proof}
	Let $\tau_\chi: S_\chi \to M$ be the canonical projection.
It suffices to show
$$
R\Gamma_{\tau^{-1}_\chi(M \setminus Z)} \nu^{sa}_{\chi}(F) \simeq 0.
$$
Let $\tilde{p}:\widetilde{X}_\chi \to X$ be the multi-normal deformation of $X$
along $\chi$ and $\Omega = \{t_1 > 0, \cdots, t_\ell > 0\} \subset \widetilde{X}_\chi$.
Set $U = X \setminus Z$.
Since $\tilde{U}:=\tilde{p}^{-1}(U)$ is a subanalytic open neighborhood
of $\tau^{-1}_\chi(M \setminus Z)$ in $\widetilde{X}_\chi$, for
any subanalytic open subset $V \subset \tau^{-1}_\chi(M \setminus Z)$
and any subanalytic open neighborhood of $\tilde{V} \subset \widetilde{X}_\chi$ of 
$V$, the open subset $\tilde{V} \cap \tilde{U}$ is still an open neighborhood of $V$ and
we have
$$
\begin{aligned}
&R\Gamma(\tilde{V} \cap \tilde{U};\, R\Gamma_\Omega(\tilde{p}^{-1}F)) \simeq
R\Gamma(\tilde{V};\, R\Gamma_{\Omega \cap \tilde{U}}(\tilde{p}^{-1}F))  \\
&\qquad \simeq R\Gamma(\tilde{V};\, R\Gamma_{\Omega \cap \tilde{U}}((\tilde{p}^{-1}F)_{\tilde{U}})) \simeq
R\Gamma(\tilde{V};\, R\Gamma_{\Omega \cap \tilde{U}}(\tilde{p}^{-1}(F_U)))
=0.
\end{aligned}
$$
This completes the proof.
\end{proof}
\begin{lem}{\label{lem:restriction_ok}}
	Let $F_1,\dots,F_\ell$ and $G$ be in $D^b(\CC_{X_{sa}})$, and set
$$
\mathscr{M}^{sa}(F_1,\cdots,F_\ell;\,G) = \mu^{sa}_{\widehat{\chi}}(\rh(p_2^{-1}(F_1 \boxtimes \cdots \boxtimes F_\ell),\,p_1^!Ri_{\Delta*}G)).
$$
Then we have
\begin{equation}{\label{eq:diagonal_support}}
	Ri_{\widehat{\chi} *}i_{\widehat{\chi}}^{-1} \mathscr{M}^{sa}(F_1,\cdots,F_\ell;\,G) 
	\simeq \mathscr{M}^{sa}(F_1,\cdots,F_\ell;\,G).
\end{equation}
\end{lem}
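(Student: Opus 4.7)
The plan is to deduce \eqref{eq:diagonal_support} as a direct application of Lemma \ref{lem:fundamental_restriction_ok} to the family $\widehat{\chi}$ viewed inside the ambient manifold $(X^2)^\ell$, with a suitably chosen closed subanalytic subset $Z$.

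The first step is to exhibit a closed subanalytic subset of $(X^2)^\ell$ containing the support of
$$
\mathcal{H} := \rh(p_2^{-1}(F_1 \boxtimes \cdots \boxtimes F_\ell),\,p_1^!Ri_{\Delta*}G).
$$
Since $i_{\Delta}: X \to X^\ell$ is a closed embedding, $Ri_{\Delta*}G$ is supported on the diagonal $\Delta_{X^\ell} \subset X^\ell$. Because $\supp(p_1^!(-)) \subset p_1^{-1}(\supp(-))$, the sheaf $p_1^!Ri_{\Delta*}G$ is supported on
$$
Z := p_1^{-1}(\Delta_{X^\ell}) \subset (X^2)^\ell,
$$
and hence so is $\mathcal{H}$, since $\rh(A,B)$ vanishes wherever $B$ vanishes. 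In particular, $\mathcal{H}_{(X^2)^\ell \setminus Z} \simeq 0$, which is the hypothesis of Lemma \ref{lem:fundamental_restriction_ok}.

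The second step is to identify the subset $(M \cap Z) \times_M S^*_{\widehat{\chi}}$ appearing in that lemma, where $M := \widehat{X}_1 \cap \cdots \cap \widehat{X}_\ell$ is the base manifold of the universal family $\widehat{\chi}$. From the description of the $\widehat{X}_k$ recalled in Section \ref{sec:universal}, the $j$-th factor of $M$ inside $(X^2)^\ell$ equals $\bigcap_k X_{k,j}$, which collapses to $\Delta_X$ because of the case $k=j$. Hence $M = \Delta_X^\ell$, canonically identified with $X^\ell$. Intersecting with $Z$ further forces all ``first'' coordinates of the pairs to coincide, so $M \cap Z$ collapses to the total diagonal, which is identified with $X$. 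Unwinding the definitions yields
$$
(M \cap Z) \times_M S^*_{\widehat{\chi}} \;=\; X \times_{X^\ell} (T^*X)^\ell \;=\; S^*_{\widehat{\chi},X},
$$
and the inclusion into $S^*_{\widehat{\chi}}$ supplied by Lemma \ref{lem:fundamental_restriction_ok} coincides with $i_{\widehat{\chi}}$.

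With these two observations in place, Lemma \ref{lem:fundamental_restriction_ok}, applied with $\chi = \widehat{\chi}$, ambient manifold $(X^2)^\ell$, and $F = \mathcal{H}$, gives exactly \eqref{eq:diagonal_support}. The only non-trivial bookkeeping is the identification in the second step: once the fiber product is correctly matched with $S^*_{\widehat{\chi},X}$, the conclusion follows formally from the previous lemma, with no further analytic content required.
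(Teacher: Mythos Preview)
Your argument is correct and follows essentially the same route as the paper: both proofs apply Lemma \ref{lem:fundamental_restriction_ok} with the closed set $Z=p_1^{-1}(\Delta_{X^\ell})=\tilde{i}_\Delta(X\times X^\ell)$ and then identify $M\cap Z$ with the total diagonal $X$ so that $(M\cap Z)\times_M S^*_{\widehat{\chi}}=S^*_{\widehat{\chi},X}$. The only cosmetic difference is that the paper first rewrites $p_1^! Ri_{\Delta*}G$ via base change along the Cartesian square $(i_\Delta,p_1)$ to make the support containment transparent, whereas you observe directly that $\supp(p_1^! H)\subset p_1^{-1}(\supp H)$; both arrive at the same $Z$.
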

\begin{proof}
Consider the following Cartesian diagram:
$$
\begin{matrix}
	X^\ell \times X^\ell & \xleftarrow{\,\,\tilde{i}_\Delta = i_\Delta \times \mathrm{id}\,\,}
& X \times X^\ell\\
\,\,\,\,\,\downarrow p_1 & & \,\,\,\,\,\downarrow \tilde{p}_1 \\
X^\ell & \xleftarrow{\,\,i_\Delta\,\,} 
& X \\
\end{matrix}
$$
Then we have
$$
\begin{aligned}
	\mathscr{M}^{sa}(F_1,\cdots,F_\ell;\,G)
&\simeq
\mu^{sa}_{\widehat{\chi}}(\rh(p_2^{-1}(F_1 \boxtimes \cdots \boxtimes F_\ell),\,
R\tilde{i}_{\Delta*}\tilde{p}_1^! G)) \\
&\simeq
\mu^{sa}_{\widehat{\chi}}(R\tilde{i}_{\Delta *}
\rh(\tilde{i}^{-1}_{\Delta}p_2^{-1}(F_1 \boxtimes \cdots \boxtimes F_\ell),\,
\tilde{p}_1^! G)).
\end{aligned}
$$
Hereafter we regard $X$ and $(\Delta_X)^\ell$ as subspaces in $X^\ell \times X^\ell$
by the diagonal embeddings
$$
X \hookrightarrow (\Delta_X)^\ell \hookrightarrow X^\ell \times X^\ell.
$$
Since
$
(\Delta_X)^\ell \,\,\bigcap\,\, \tilde{i}_{\Delta}(X \times X^\ell) = X
$
holds, the claims follows from the previous lemma.
\end{proof}

\begin{oss}
By taking the above lemma into account, 
we can regard $\mathscr{M}^{sa}(F_1,\cdots,F_\ell;\,G)$ as
an object on $S^*_{\chi,X}$. Further, we often omit $i^{-1}_{\widehat{\chi}}$
in the definition of $\mu hom^{sa}_{\widehat{\chi}}(F_1,\dots,F_\ell;G)$.
\end{oss}

\

Set 
$\widehat{\chi}_{123} = 
\widehat{\chi}(X\times Y \times Z, (\Lambda,\preceq))$ and
$\widehat{\chi}_{12} = \widehat{\chi}(X\times Y, (\Lambda,\preceq))$.
We also set $\widehat{\chi}_{23}$ and $\widehat{\chi}_{13}$ 
in the same way. By noticing 
$$
\begin{aligned}
S^*_{\widehat{\chi}_{123},\Delta} &=
T^*_{\Delta_{X\times Y \times Z}}(X\times Y \times Z)^2 
\underset{X \times Y \times Z}{\times} \cdots
\underset{X \times Y \times Z}{\times} 
T^*_{\Delta_{X\times Y \times Z}}(X\times Y \times Z)^2  \\
&=
T^*(X \times Y\times Z) \underset{X \times Y \times Z}{\times} \cdots
\underset{X \times Y \times Z}{\times} T^*(X \times Y \times Z), \\
S^*_{\widehat{\chi}_{12},\Delta} &=
T^*_{\Delta_{X\times Y}}(X\times Y)^2 
\underset{X \times Y}{\times} \cdots
\underset{X \times Y}{\times} 
T^*_{\Delta_{X\times Y}}(X\times Y)^2  \\
&=
T^*(X \times Y) \underset{X \times Y}{\times} \cdots
\underset{X \times Y}{\times} T^*(X \times Y),
\end{aligned}
$$
and similar expressions for $S^*_{\widehat{\chi}_{ij},\Delta}$ ($i < j \in \{1,2,3\}$),
we have the canonical projections 
$$
q_{13}: S^*_{\widehat{\chi}_{123}, \Delta}  \to
S^*_{\widehat{\chi}_{13}, \Delta},\quad
q_{12}: S^*_{\widehat{\chi}_{123}, \Delta}  \to
S^*_{\widehat{\chi}_{12}, \Delta},\quad
q_{23}: S^*_{\widehat{\chi}_{123}, \Delta}  \to
S^*_{\widehat{\chi}_{23}, \Delta}.
$$
Let us denote by $q_{ij}^a$ the composition of the projection $q_{ij}$ and the antipodal map on the $j$-th factor.

\begin{prop}{\label{prop:composition_muhom}} Let $F_1^1,\dots,F_\ell^1,G^1 \in D^b(\CC_{(X \times Y)_{sa}})$ and let $F_1^2,\dots,F_\ell^2,G^2 \in D^b(\CC_{(Y \times Z)_{sa}})$. There is a canonical morphism
\begin{eqnarray*}
& R(q^a_{13})_{\csapd}\left((q_{12}^{a})^{-1}\mu hom^{sa}_{\widehat{\chi}_{12}}(F_1^1,\dots,F_\ell^1;G^1) \otimes (q_{23}^{a})^{-1}\mu hom^{sa}_{\widehat{\chi}_{23}}(F_1^2,\dots,F_\ell^2;G^2)\right) & \\
	& \to \mu hom^{sa}_{\widehat{\chi}_{13}}(F_1^1 \circ F_1^2,\dots,F_\ell^1 \circ F_\ell^2;G^1 \circ G^2). &
\end{eqnarray*}
\end{prop}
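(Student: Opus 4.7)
The plan is to specialize the preceding Proposition (the general composition formula for $\mu hom^{sa}$) to the case of universal families. I work with $\widehat{\chi}_{12}$, $\widehat{\chi}_{23}$, $\widehat{\chi}_{13}$ in the roles of $\chi_{(X\times Y)^2}$, $\chi_{(Y\times Z)^2}$, $\chi_{(X\times Z)^2}$ of that proposition, and with $\widehat{\chi}_{123}$ as the auxiliary family on $(X\times Y\times Z)^2$. The inputs to the preceding proposition will be the external products $F^i := F_1^i \boxtimes \cdots \boxtimes F_\ell^i$ together with the diagonally pushed objects $Ri_{\Delta*}G^i$, regarded as sheaves on $(X\times Y)^\ell$, $(Y\times Z)^\ell$, and $(X\times Z)^\ell$ after the canonical reordering $(M^2)^\ell \simeq (M^\ell)^2$.

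First I will verify the hypotheses of that proposition for these universal families. All three families share the induced type $(\Lambda,\preceq)$ by construction, and a direct computation from the description $\widehat{X}_k = X_{k,1} \times \cdots \times X_{k,\ell}$ shows that the components of $\widehat{\chi}_{123}$ coincide with $\tilde{j}^{-1}((\widehat{\chi}_{12})_k \times (\widehat{\chi}_{23})_k)$ and that $\tilde{p}_{13}((\widehat{\chi}_{123})_k) \subset (\widehat{\chi}_{13})_k$. Next I will establish the compatibility of the inputs with composition: by the projection formula we have
$$
(F_1^1 \boxtimes \cdots \boxtimes F_\ell^1) \circ (F_1^2 \boxtimes \cdots \boxtimes F_\ell^2) \simeq (F_1^1 \circ F_1^2) \boxtimes \cdots \boxtimes (F_\ell^1 \circ F_\ell^2),
$$
where the left-hand side uses composition over $Y^\ell$ and the right-hand side uses composition over $Y$. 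By a base-change argument exploiting the diagonal supports, this in turn yields $Ri_{\Delta*}G^1 \circ Ri_{\Delta*}G^2 \simeq Ri_{\Delta*}(G^1 \circ G^2)$.

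Applying the previous Proposition then produces a canonical morphism
$$
R(p_{13\pi})_{\csapd}({}^tp'_{13})^{-1}R({}^tj')_{\csapd}\bigl(\mathscr{M}^{sa}_{12} \boxtimes_{Y^\ell} \mathscr{M}^{sa}_{23}\bigr) \to \mathscr{M}^{sa}_{13},
$$
where $\mathscr{M}^{sa}_{ij}$ denotes the multi-microlocalized complex underlying $\mu hom^{sa}_{\widehat{\chi}_{ij}}$ before restriction to the diagonal. By Lemma \ref{lem:restriction_ok}, each $\mathscr{M}^{sa}_{ij}$ is supported on the diagonal zero-section $S^*_{\widehat{\chi}_{ij},\Delta}$, so the morphism descends to the diagonal bundles. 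On these bundles, the composite ${}^tp'_{13}$ followed by the pushforward along $\tilde{j}'$ collapses into the single canonical projection $q_{13}^a$, while the external tensor $\boxtimes_{Y^\ell}$ becomes the internal tensor pulled back via $q_{12}^a$ and $q_{23}^a$, yielding the claimed morphism.

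The main obstacle will be this last identification: matching the composite chain $R(p_{13\pi})_{\csapd}({}^tp'_{13})^{-1}R({}^tj')_{\csapd}$ of the general proposition against the single proper pushforward $R(q_{13}^a)_{\csapd}$ paired with the two pullbacks $(q_{12}^a)^{-1}$ and $(q_{23}^a)^{-1}$ of the present statement, all after restriction to the diagonals, while tracking the antipodal twists consistently. This reduces to a careful bookkeeping of the diagonal embeddings $\Delta_{X\times Y\times Z} \hookrightarrow (X\times Y\times Z)^2$ together with the explicit product structure of the universal families, but no new geometric input is required.
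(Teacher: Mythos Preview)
Your approach is essentially the same as the paper's: apply the preceding general composition proposition with the universal families, reduce the inputs $F_1^i\boxtimes\cdots\boxtimes F_\ell^i$ and $Ri_{\Delta*}G^i$ via their compatibility with kernel composition, and then identify the resulting chain of maps with the $q_{ij}^a$'s on the diagonal bundles. The paper carries out the middle step by listing four canonical morphisms for $f^\ell$ (two of which are written only as morphisms, not isomorphisms, which is all that is needed), and handles the final identification by invoking the commutative diagram (4.4.15) of \cite{KS90}; you instead appeal to Lemma~\ref{lem:restriction_ok} for the diagonal support and then do the bookkeeping by hand. These are cosmetic differences, though note that Lemma~\ref{lem:restriction_ok} appears after this proposition in the text, and that your claimed isomorphisms for the compositions are stronger than what the argument requires---only the canonical morphisms are needed, and in the subanalytic setting one should be cautious about asserting more.
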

\begin{proof}
The result follows from the previous proposition and the following canonical morphisms, for a general $f:X \to Y$ and $f^\ell(x_1,\dots,x_\ell)=(f(x_1),\dots,f(x_\ell))$:
$$
(f^{\ell})^{-1}(H_1 \boxtimes \cdots \boxtimes H_\ell) \simeq \imin f H_1 \boxtimes \cdots \boxtimes \imin f H_\ell,
$$
$$
Rf^\ell_!(H_1 \boxtimes \cdots \boxtimes H_\ell) \gets Rf_!H_1 \boxtimes \cdots \boxtimes Rf_!H_\ell,
$$
$$
(f^{\ell})^{-1}Ri_{\Delta*}H \to Ri_{\Delta*}\imin fH,
$$
$$
Rf^\ell_!Ri_{\Delta*}H \simeq Ri_{\Delta*}Rf_!H.
$$
Finally, by taking the commutative diagram (4.4.15) in p.213 \cite{KS90} into account,
we obtain the desired formula.
This completes the proof.
\end{proof}

Let $\chi=\{M_1,\dots,M_\ell\}$ be a family of closed submanifolds of forest type.
Since
$$
S^*_\chi = T^*_{M_1}\iota(M_1) \xtimes \cdots \xtimes T^*_{M_\ell}\iota(M_\ell) 
$$
holds by Proposition 1.5 \cite{HP13}, 
we have the surjective morphism
$$
\omega: T^*_{M_1}X \xtimes \cdots \xtimes T^*_{M_\ell}X
\longrightarrow
S^*_\chi = T^*_{M_1}\iota(M_1) \xtimes \cdots \xtimes T^*_{M_\ell}\iota(M_\ell). 
$$
Recall that we set $\widehat{\chi} = \widehat{\chi}(X, (\Lambda, \preceq))$.
The following theorem says that a usual multi-microlocalization 
$\mu^{sa}_\chi$ along $\chi$ whose induced type is $(\Lambda, \preceq)$ 
can be recovered from $\mu hom^{sa}_{\widehat{\chi}}$:

\begin{teo}{\label{thm:muhom_and_mu}} 
Assume the induced type of  $\chi=\{M_1,\dots,M_\ell\}$ is the same as $(\Lambda, \preceq)$.
Then, for any $G \in D^b(\CC_{X_{sa}})$ we have
$$
i_* \omega^{-1}\mu^{sa}_\chi(G) \simeq \mu hom^{sa}_{\widehat{\chi}}(\CC_{M_1},\dots,\CC_{M_\ell};G),
$$
where $i$ denotes the closed embedding
$$
S^*_\chi = T^*_{M_1}X \xtimes \cdots \xtimes T^*_{M_\ell}X
\hookrightarrow 
T^*X \xtimes \cdots \xtimes T^*X = S^*_{\widehat{\chi},X}.
$$
\end{teo}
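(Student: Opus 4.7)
The strategy is to unravel the definition of $\mu hom^{sa}_{\widehat{\chi}}$, rewrite the inner sheaf as a proper pushforward from an auxiliary space, and then iteratively apply the compatibility of multi-microlocalization with direct and inverse images (Propositions \ref{prop:direct_image} and \ref{prop:inverse_image}) to reduce back to $\mu^{sa}_\chi(G)$ on $X$. Using base change in the Cartesian square of Lemma \ref{lem:restriction_ok} one has $p_1^! R i_{\Delta *} G \simeq R\tilde{i}_{\Delta *}(G \boxtimes \omega_{X^\ell})$, and since $p_2^{-1}(\CC_{M_1} \boxtimes \cdots \boxtimes \CC_{M_\ell}) = \CC_Z$ for the closed subset $Z = X^\ell \times (M_1 \times \cdots \times M_\ell) \subset (X^2)^\ell$, a direct computation using $R\Gamma_{M_j}(\omega_X) \simeq i_{M_j *}\omega_{M_j}$ gives
$$
\rh(p_2^{-1}(\CC_{M_1}\boxtimes\cdots\boxtimes\CC_{M_\ell}),\, p_1^! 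R i_{\Delta *} G) \simeq R\psi_*(G \boxtimes \omega_{M_1} \boxtimes \cdots \boxtimes \omega_{M_\ell}),
$$
where $\psi: X \times M_1 \times \cdots \times M_\ell \hookrightarrow (X^2)^\ell$ is the closed embedding sending $(x, y_1, \ldots, y_\ell)$ to $((x, y_1), \ldots, (x, y_\ell))$. The inner sheaf equals $q^! G$ for $q: X \times M_1 \times \cdots \times M_\ell \to X$ the first projection, since $\omega_{X \times M_1 \times \cdots \times M_\ell / X} \simeq \omega_{M_1} \boxtimes \cdots \boxtimes \omega_{M_\ell}$.

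Next, I would define $\chi^M = \{\chi^M_1, \ldots, \chi^M_\ell\}$ by $\chi^M_k := \psi^{-1}(\widehat{X}_k) = \{(x, y_1, \ldots, y_\ell) : x \in M_k,\ y_j = x \text{ for every } j \succeq k\}$, and check that $\chi^M$ is a family of forest type in $X \times M_1 \times \cdots \times M_\ell$ with induced type $(\Lambda, \preceq)$. Applying Proposition \ref{prop:direct_image}(ii) to the closed embedding $\psi$ (properness is automatic, $\chi^M_k = \psi^{-1}(\widehat{X}_k)$ by construction, and the cleanness conditions follow from the diagonal structure of $\widehat{\chi}$) yields
$$
\mu^{sa}_{\widehat{\chi}}\, R\psi_*\, q^! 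G \simeq R\psi_{\pi\csapd}\, {}^t\psi'{}^{-1}\, \mu^{sa}_{\chi^M}\, q^! G.
$$
On the other hand, $q$ is smooth, and each restriction $q|_{\chi^M_k}: \chi^M_k \to M_k$ is smooth with fiber $\prod_{j \not\succeq k} M_j$ of constant dimension, so $T_\chi q$ is smooth; Proposition \ref{prop:inverse_image}(ii) then gives $\mu^{sa}_{\chi^M}\, q^! G \simeq R{}^t q'_*\, q_\pi^!\, \mu^{sa}_\chi G$.

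Combining these identifications and applying the restriction $i_{\widehat{\chi}}^{-1}$ to the diagonal $S^*_{\widehat{\chi}, X}$, the cumulative operation at the cotangent-bundle level is a composition of pushforwards and pullbacks along ${}^t\psi'$, $\psi_\pi$, ${}^tq'$, $q_\pi$. A direct inspection shows that, once restricted to the diagonal base, this composition factors through the surjection $\omega: T^*_{M_1}X \xtimes \cdots \xtimes T^*_{M_\ell}X \to S^*_\chi$ followed by the closed embedding $i: T^*_{M_1}X \xtimes \cdots \xtimes T^*_{M_\ell}X \hookrightarrow S^*_{\widehat{\chi},X}$, so the result is exactly $i_* \omega^{-1} \mu^{sa}_\chi G$.

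The main obstacle will be verifying the cleanness conditions in Proposition \ref{prop:direct_image}(ii) for $(\psi, \chi^M, \widehat{\chi})$ uniformly over all non-empty $J \subseteq \Lambda$, and keeping an accurate ledger of the correction factors $\omega_{X/Y}$ and $\omega_{M/N}$ in both propositions so that they cancel exactly against $\omega^{-1}$ without introducing extraneous twists. The combinatorics of the forest type, especially when some $k \in \Lambda$ admits several incomparable strict successors, makes the cleanness verifications and the identification of the cotangent-level composition delicate at the deep strata.
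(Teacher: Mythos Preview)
Your proposal is correct and follows the same overall strategy as the paper---rewrite the inner $\rh$ as a proper direct image, then push through Propositions~\ref{prop:direct_image} and~\ref{prop:inverse_image}---but you use a different factorization. The paper factors through the \emph{full product} $(X\times M_1)\times\cdots\times(X\times M_\ell)$ via three maps
\[
X \xrightarrow{\ i_\Delta\ } X^\ell \xleftarrow{\ p_1^\ell\ } (X\times M_1)\times\cdots\times(X\times M_\ell) \xrightarrow{\ f_2^\ell\ } (X^2)^\ell,
\]
writing the inner sheaf as $R(f_2^\ell)_*(f_2^\ell)^!p_1^!Ri_{\Delta*}G$ and applying Proposition~\ref{prop:direct_image} to the closed embeddings $f_2^\ell,i_\Delta$ and Proposition~\ref{prop:inverse_image} to the projection $p_1^\ell$. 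The intermediate families $\widehat{\chi}_{(X\times M_j)_j}$ and $\widehat{\chi}_{X^\ell}$ are themselves products of elementary pieces ($\Delta_{M_k}$, $X\times M_k$, $M_k$, $X$), so every hypothesis in those propositions reduces to a single-factor check; in particular the cleanness conditions become automatic and the paper does not need to discuss them. Your two-step factorization through $X\times M_1\times\cdots\times M_\ell$ via $\psi$ and $q$ is more economical, and the family $\chi^M$ you define does satisfy H1--H3 (the key point being that in a forest type two incomparable elements have no common upper bound, so in the coordinates $z_j^{(i)}=y_j^{(i)}-x^{(i)}$ the index sets defining the $\chi^M_k$ are either nested or disjoint). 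But because your $\psi$ mixes the diagonal in the first copies with the embeddings $M_j\hookrightarrow X$, the cleanness verifications genuinely require the cross-factor analysis you flag as the main obstacle, whereas the paper's extra intermediate step eliminates that work entirely. Both routes yield the same final identification with $i_*\omega^{-1}\mu^{sa}_\chi(G)$.
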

\begin{proof}
Let us consider the following diagrams
$$
X \xrightarrow{\,\,i_\Delta\,\,} X^\ell \xleftarrow{\,\,p_1^\ell\,\,} 
(X\times M_j)_j := (X \times M_1)\times \cdots \times (X \times M_\ell) \xrightarrow{\,\,f_2^\ell\,\,} 
(X^2)^\ell,
$$
where $i_\Delta$ and $f_2^\ell$ are closed embeddings and $p_1^\ell$ is the canonical
projection. 

The family $\widehat{\chi}_{(X \times M_j)_j}=\{\widehat{M}_j,j=1,\dots,\ell\}$ is defined as follows: $\widehat{M}_j=Z_{j1} \times \cdots \times Z_{j\ell}$ where
$Z_{jk}=X \times M_k$ if $j \npreceq k$, $\Delta_{M_k}$ if $j \preceq k$. 
The family $\widehat{\chi}_{X^\ell}=\{\widehat{N}_j,j=1,\dots,\ell\}$ is defined as follows: $\widehat{N}_j=Z_{j1} \times \cdots \times Z_{j\ell}$ where
$Z_{jk}=X$ if $j \npreceq k$, $M_k$ if $j \preceq k$.
Then we have 
$$
\times_M T^*_{M_j}\iota(M_j) 
\xleftarrow{\,\,{}^ti_\Delta'\,\,}
M \underset{X^\ell}{\times} (\times_j T^*_{M_j}X) \xrightarrow{\,\,i_{\Delta\pi}\,\,} 
\times_j T^*_{M_j}X.
$$
Here 
$\times_M T^*_{M_j}\iota(M_j)$ denotes $T^*_{M_1}\iota(M_1) \times_X \cdots \times_X
T^*_{M_\ell}\iota(M_\ell)$ and
$\times_j T^*_{M_j}X$ does $T^*_{M_1}X \times \cdots \times T^*_{M_\ell}X$.
The map $p^\ell_1$ induces
$$
\times_j T^*_{M_j}X
\xleftarrow{\,\,{p^\ell_{1 \pi}}\,\,}
(\times_j \Delta_{M_j}) \underset{X^\ell}{\times} (\times_j T^*_{M_j}X)
\xrightarrow{\,\,{}^tp_1^\ell{}'\,\,} 
\times_j T^*_{M_j} (X\times M_j),
$$
where $p^\ell_{1 \pi}$ is isomorphic and ${}^tp_1^\ell{}'$ is a closed embedding.
Finally the map $f_2^\ell$ induces
$$
\times_j T^*_{M_j} (X\times M_j)
\xleftarrow{\,\,{}^tf_2^\ell{}'\,\,} 
(\times_j M_j) \underset{X^\ell}{\times} (\times_j T^*X)
\xrightarrow{\,\,f^\ell_{2\pi}\,\,} \times_j T^*X,
$$
where $f^\ell_{2\pi}$ is a closed embedding and ${}^tf_2^\ell{}'$ is isomorphic.
Then, by Propositions \ref{prop:direct_image} and \ref{prop:inverse_image}, we have the following isomorphisms
\begin{eqnarray*}
\mu hom^{sa}_{\widehat{\chi}}(\CC_{M_1},\dots,\CC_{M_\ell};G) & 
= & i_{\widehat{\chi}}^{-1}\mu^{sa}_{\widehat{\chi}}\rh(p_2^{-1}(\CC_{M_1} \boxtimes \cdots \boxtimes \CC_{M_\ell}),p_1^!Ri_{\Delta*}G) \\
& \simeq & i_{\widehat{\chi}}^{-1}\mu^{sa}_{\widehat{\chi}}(R(f^\ell_2)_*(f_2^{\ell})^{!}p_1^!Ri_{\Delta*}G) \\
& \simeq 
& i_{\widehat{\chi}}^{-1}R(f^\ell_{2\pi})_{*} ({}^tf_2^\ell{}')^{-1}\mu^{sa}_{\widehat{\chi}_{(X \times M_j)_j}}(p_1^{\ell!}Ri_{\Delta*}G) \\
& \simeq & i_{\widehat{\chi}}^{-1}R(f^\ell_{2\pi})_* ({}^tf_2^\ell{}')^{-1}R({}^tp_1^\ell{}')_{*} \mu^{sa}_{\widehat{\chi}_{X^\ell}}(Ri_{\Delta*}G) \\
& \simeq & i_{\widehat{\chi}}^{-1}R(f_{2\pi}^{\ell})_* ({}^tf_2^\ell{}')^{-1}R({}^tp_1^\ell{}')_*
R(i_{\Delta \pi})_* ({}^ti_{\Delta}{}')^{-1}\mu^{sa}_{\chi}(G).
\end{eqnarray*}
We easily see
$$
i_{\widehat{\chi}}^{-1}R(f^\ell_{2\pi})_* ({}^tf_2^\ell{}')^{-1}R({}^tp_{1}^\ell{}')_* R(i_{\Delta\pi})_* 
({}^ti_{\Delta}{}')^{-1}
\simeq 
Ri_*\omega^{-1} \simeq i_*\omega^{-1},
$$
which completes the proof.
\end{proof}

\section{Multi-microlocal operators}{\label{sec:multil-micro-op}}
Let $X$ be an $n$-dimensional complex manifold.
Let $(\Lambda, \preceq)$ be a ``type'' and $\Theta$ a stable subset of $\Lambda$.
Through the note,
$\widehat{\chi}$ designates the universal family $\widehat{\chi}(X, (\Lambda,\preceq))$
and $\widehat{\chi}_2$ also designates the universal family $\widehat{\chi}(X^2, (\Lambda,\preceq))$, where $X^2 = X \times X$. 

Furthermore, we assume that there exists 
a compatible family $\varpi := \{\varpi_k: X \to S_k\}_{k \in \Theta}$ 
of submersive morphisms of complex manifolds with respect to $(\Theta, \preceq|_\Theta)$ 
such that, for any point $\{s_k\}_{k \in \Theta}$ of $\varpi$,
a family
$
\{\varpi_k^{-1}(s_k)\}_{k \in \Theta}
$
of closed complex submanifolds in $X$ becomes of forest type in $X$ 
and its type is $(\Theta, \preceq|_{\Theta})$.
See Appendix for the notion of  a compatible family of submersive morphisms.

\begin{oss}
Locally such a compatible family always exists. 
Let us take a system $x = (x^{(0)};\,\{x^{(i)}\}_{i \in \Theta})$ of coordinates of $X$ and subsets $I_k \subset \Theta$ ($k\in \Theta$) 
as those in Proposition 1.2 \cite{HP13}.
Then $\{\varpi_k\}_{k \in \Theta}$ defined by
$$
\varpi_k(x^{(0)};\,\{x^{(j)}\}_{j \in \Theta}) = (x^{(i)})_{i \in I_k}\qquad (k \in \Theta),
$$
is a compatible family of submersive morphisms.
\end{oss}

Let us define a closed subset $\Delta^{\varpi_k}$ in $X^2$  by
\begin{equation}{\label{eq:delta_varpi}}
\Delta^{\varpi_k} =
\left\{
\begin{array}{ll}
\Delta_X \qquad & \text{if $k \notin \Theta$}, \\
\\
\{(x,y) \in X^2;\, 
\varpi_k(x) = \varpi_k(y)\} \quad&\text{if $k \in \Theta$}.
\end{array}
\right.
\end{equation}
For $k \in \Theta$, since $\varpi_k: X \to S_k$ is a submersion,
we have the canonical embedding
$$
X \underset{S_k}{\times} T^*{S_k} \,\hookrightarrow\, T^*X,
$$
which is the dual vector bundles of
$
X \underset{S_k}{\times} T{S_k} \,\xleftarrow{\,\varpi_k'\,}\, TX
$.
Hereafter we regard 
$X \underset{S_k}{\times} T^*{S_k}$ as a subbundle in $T^*X$.
Furthermore, the canonical morphism of vector bundles
$$
X \underset{X^2}{\times}T_{\Delta^{\varpi_k}}X^2 \ni (x,x;\, v_1, v_2) \mapsto
(x;\, \varpi'_{k,x}(v_1) - \varpi'_{k,x}(v_2)) \in X \times_{S_k} TS_k
$$
is clearly isomorphic. Hence we have obtained the isomorphism
\begin{equation}{\label{eq:iso_S_k_cotan}}
X \underset{X^2}{\times}T^{\,*}_{\Delta^{\varpi_k}}X^2 \simeq
X \times_{S_k} T^*S_k\qquad (k \in \Theta).
\end{equation}
We set
$$
S^*_{\varpi,k} = X \underset{X^2}{\times}T^{\,*}_{\Delta^{\varpi_k}}X^2 =
\begin{cases}
T^*X \qquad & (k \notin \Theta), \\
X \underset{S_k}{\times} T^*S_k \qquad & (k \in \Theta).
\end{cases}
$$
Then we define the vector bundle over $X^\ell$ by
$$
S^*_{\varpi} := 
S^*_{\varpi,1} \times 
S^*_{\varpi,2} \times \cdots \times
S^*_{\varpi,\ell}
\quad \subset  \quad S^*_{\widehat{\chi}}
$$
and the one over $X$ by
$$
S^*_{\varpi,X} := X \underset{X^\ell}{\times} S^*_{\varpi} =
S^*_{\varpi,1} \xtimes 
S^*_{\varpi,2} \xtimes \cdots \xtimes
S^*_{\varpi,\ell}
\quad \subset  \quad
S^*_{\widehat{\chi},X}.
$$
Recall $n = \mathrm{dim}_{\mathbb{C}} X$.
\begin{df}{\label{df:multi-microlocal-operators}}
The sheaf $\mathscr{E}^{\varpi,\mathbb{R}}_{\hat{\chi}}$
of multi-microlocal operators on $S^*_{\hat{\chi},X}$ is defined by
$$
\mathscr{E}^{\varpi,\mathbb{R}}_{\hat{\chi}} :=
H^n\left(i^{-1}_{\widehat{\chi}_{21}} \muhom_{\widehat{\chi}_2}
(\mathbb{C}_{\Delta^{\varpi_1}},\mathbb{C}_{\Delta^{\varpi_2}},\dots,
\mathbb{C}_{\Delta^{\varpi_\ell}};\,
\OO^{(0,n)}_{X^2})\right).
$$
The sheaf $\mathscr{E}^{\varpi,f,\mathbb{R}}_{\hat{\chi}}$
of tempered multi-microlocal operators on $S^*_{\hat{\chi},X}$ is defined by
$$
\mathscr{E}^{\varpi,f,\mathbb{R}}_{\hat{\chi}} :=
H^n\left(\rho^{-1} i^{-1}_{\widehat{\chi}_{21}} \muhom_{\widehat{\chi}_2}^{sa}(\mathbb{C}_{\Delta^{\varpi_1}},\mathbb{C}_{\Delta^{\varpi_2}},\dots,\mathbb{C}_{\Delta^{\varpi_\ell}};\,
\OO^{t\,(0,n)}_{X^2})\right).
$$
Here the canonical morphism 
$i_{\widehat{\chi}_{21}}: 
S^*_{\widehat{\chi},X} \to S^*_{\widehat{\chi}_2,X^2}$
is defined by the closed embedding
$$
T^*X \xtimes \cdots \xtimes T^*X
=
T_{\Delta_X}^*X^2 \xtimes \cdots \xtimes T_{\Delta_X}^*X^2
\hookrightarrow 
T^*X^2 \underset{X^2}{\times} \cdots \underset{X^2}{\times} T^*X^2
$$
with
$$
(x;\zeta_1,\cdots,\zeta_\ell) \mapsto
(x,x;\zeta_1,-\zeta_1, \cdots, \zeta_\ell, - \zeta_\ell).
$$
\end{df}

\

Let $G \in D^b(\CC_{(X\times X)_{sa}})$ and set
$$
\mathscr{M}^{sa}(G) :=
\mu^{sa}_{\widehat{\chi}_2}
(\rh(p_2^{-1}(\mathbb{C}_{\Delta^{\varpi_1}} \boxtimes \cdots \boxtimes 
	\mathbb{C}_{\Delta^{\varpi_\ell}}),
\,p_1^!Ri_{\Delta*}G)).
$$
Note that we define 
$\mathscr{M}(G)$ for $G \in D^b(\CC_{X\times X})$ in the same way.
Let $\widehat{\chi}_Y = \{Y_1,\cdots,Y_\ell\}$ 
be a family of closed submanifolds in $(X^2)^\ell$ defined by
\begin{equation}{\label{eq:chi_G}}
Y_k = Y_{k,1} \times Y_{k,2} \times \cdots \times Y_{k,\ell}\qquad
(k=1,2,\cdots,\ell),
\end{equation}
where
$$
Y_{k,j} :=
\begin{cases}
\Delta^{\varpi_j} \quad & (k \preceq j), \\
X^2 \quad & (\text{otherwise}).
\end{cases}
$$
By the same argument as in the proof of Theorem \ref{thm:muhom_and_mu}, we have
\begin{equation}\label{eq:muhom_micro_alt}
	\mathscr{M}^{sa}(G) \simeq
Rj_*\mu^{sa}_{\widehat{\chi}_Y} (Ri_{\Delta *}G),
\end{equation}
where $j$ is the closed embedding
$$
j: T^*_{\Delta^{\varpi_1}}X^2 \times \cdots \times T^*_{\Delta^{\varpi_\ell}}X^2 \to
T^*X^2 \times \cdots \times T^*X^2.
$$
Since $i_\Delta(X^2) \bigcap (\Delta^{\varpi_1} \times \cdots \times \Delta^{\varpi_\ell}) 
= i_\Delta(\Delta_X) \simeq X$
holds,
by Lemma \ref{lem:fundamental_restriction_ok} we get
\begin{equation}
Rj_{\varpi}{}_*j_{\varpi}^{-1}
R\mu_{\widehat{\chi}_Y}^{sa} (Ri_{\Delta *}G)
\simeq
R\mu_{\widehat{\chi}_Y}^{sa} (Ri_{\Delta *}G),
\end{equation}
where $j_{\varpi}$ is the closed embedding
$$
S^*_{\varpi,X} = S^*_{\varpi_1} \underset{X}{\times} \cdots \underset{X}{\times} S^*_{\varpi_\ell}
\hookrightarrow
T^*_{\Delta^{\varpi_1}}X^2 \times \cdots \times T^*_{\Delta^{\varpi_\ell}}X^2.
$$
Hence we can regard $R\mu_{\widehat{\chi}_Y}^{sa} (Ri_{\Delta *}G)$
as an object on $S^*_{\varpi,X}$.

\

Let us consider the following commutative diagram,
where all the morphisms are injective and all the squares are Cartesian:
$$
\begin{matrix}
S^*_{\varpi_1} \times \cdots \times  S^*_{\varpi_\ell}
& \xrightarrow{\,\,\tilde{i}_{\widehat{\chi}_{1 \varpi}}\,\,}
&T^*_{\Delta_X}X^2 \times \cdots \times  T^*_{\Delta_X}X^2 
&\xrightarrow{\,\,\tilde{i}_{\widehat{\chi}_{21}}\,\,}
&T^*X^2 \times \cdots \times  T^*X^2  \\
\\
\,\,\,\,\,\uparrow i_{\widehat{\chi}_\varpi} 
&& \,\,\,\,\,\uparrow i_{\widehat{\chi}}\,\,\,
&& \,\,\,\,\,\uparrow i_{\widehat{\chi}_2}
\\
S^*_{\varpi,X} = S^*_{\varpi_1} \underset{X}{\times} \cdots \underset{X}{\times}
S^*_{\varpi_\ell} 
& \xrightarrow{\,\,{i}_{\widehat{\chi}_{1 \varpi}}\,\,}
&T^*_{\Delta_X}X^2 \underset{X}{\times} \cdots \underset{X}{\times} T^*_{\Delta_X}X^2 
&\xrightarrow{\,\,{i}_{\widehat{\chi}_{21}}\,\,}
&T^*X^2 \underset{X^2}{\times} \cdots \underset{X^2}{\times}  T^*X^2.
\end{matrix}
$$
Note that $j \circ j_{\varpi}=  
\tilde{i}_{\widehat{\chi}_{21}} \circ \tilde{i}_{\widehat{\chi}_{1\varpi}}
\circ i_{\widehat{\chi}_\varpi}$ holds.
Then it follows from Lemma \ref{lem:restriction_ok} and \eqref{eq:muhom_micro_alt} that
we have
$$
\begin{aligned}
	\mathscr{M}^{sa}(G)
&=
Ri_{\widehat{\chi}_2 *}i_{\widehat{\chi}_2}^{-1}
Rj_*\mu_{\widehat{\chi}_Y}^{sa} (Ri_{\Delta *}G) \\
&=
Ri_{\widehat{\chi}_2 *}i_{\widehat{\chi}_2}^{-1}
Rj_*
Rj_{\varpi *}j_{\varpi}^{-1}
\mu_{\widehat{\chi}_Y}^{sa} (Ri_{\Delta *}G) \\
&=
Ri_{\widehat{\chi}_2 *}i_{\widehat{\chi}_2}^{-1}
R\tilde{i}_{\widehat{\chi}_{21} *} 
R\tilde{i}_{\widehat{\chi}_{1\varpi} *}
Ri_{\widehat{\chi}_\varpi *}
j_{\varpi}^{-1}
\mu_{\widehat{\chi}_Y}^{sa} (Ri_{\Delta *}G) \\
&= Ri_{\widehat{\chi}_2 *}Ri_{\widehat{\chi}_{21} *} i_{\widehat{\chi}}^{-1}
R\tilde{i}_{\widehat{\chi}_{1\varpi} *}
Ri_{\widehat{\chi}_\varpi *}
j_{\varpi}^{-1}
\mu_{\widehat{\chi}_Y}^{sa} (Ri_{\Delta *}G) \\
&= Ri_{\widehat{\chi}_2 *}Ri_{\widehat{\chi}_{21} *} 
Ri_{\widehat{\chi}_{1\varpi} *}
j_{\varpi}^{-1}
\mu_{\widehat{\chi}_Y}^{sa} (Ri_{\Delta *}G) \\
&= Rj_{\widehat{\chi}_2 *} j_{\varpi}^{-1}
\mu_{\widehat{\chi}_Y}^{sa} (Ri_{\Delta *}G),
\end{aligned}
$$
where $j_{\widehat{\chi}_2}: S^*_{\varpi,X} \hookrightarrow S^*_{\widehat{\chi}_2}$ 
is the closed embedding. Summing up, we have obtained
$$
\mathscr{M}^{sa}(G)
= Rj_{\widehat{\chi}_2 *} j_{\varpi}^{-1}
\mu_{\widehat{\chi}_Y}^{sa} (Ri_{\Delta *}G),
$$
and hence, we get
\begin{equation}
	\mathscr{M}^{sa}(G)
= Rj_{\widehat{\chi}_2 *} j_{\widehat{\chi}_2}^{-1}
\mathscr{M}^{sa}(G)
\end{equation}
Therefore we can regard $\mathscr{M}^{sa}(G)$ as an object on $S^*_{\varpi,X}$.
In particular, we have
\begin{equation}\label{eq:estimate_S_pi}
\muhom_{\widehat{\chi}_2}^{sa}(\mathbb{C}_{\Delta^{\varpi_1}},\mathbb{C}_{\Delta^{\varpi_2}},\dots,\mathbb{C}_{\Delta^{\varpi_\ell}};\,
\OO^{t\,(0,n)}_{X^2})
\simeq Ri_{\widehat{\chi}_{1 \varpi} *}
i_{\widehat{\chi}_{1 \varpi}}^{-1}
\muhom_{\widehat{\chi}_2}^{sa}(\mathbb{C}_{\Delta^{\varpi_1}},\mathbb{C}_{\Delta^{\varpi_2}},\dots,\mathbb{C}_{\Delta^{\varpi_\ell}};\,
\OO^{t\,(0,n)}_{X^2}).
\end{equation}
Furthermore, under these identifications, we have obtained the following expressions of 
the sheaf of multi-microlocal operators.
\begin{equation}
\mathscr{E}^{\varpi,\mathbb{R}}_{\hat{\chi}} = 
H^n\left(\mathscr{M}(\OO^{(0,n)}_{X^2})\right) = 
H^n\left(\mu_{\widehat{\chi}_Y} (Ri_{\Delta *}\OO^{(0,n)}_{X^2})\right).
\end{equation}
In the same way, we have
\begin{equation}
\mathscr{E}^{\varpi,f,\mathbb{R}}_{\hat{\chi}} =
H^n\left(\rho^{-1}\mathscr{M}^{sa}(\OO^{t, (0,n)}_{X^2})\right) = 
H^n\left(\rho^{-1} 
\mu^{sa}_{\widehat{\chi}_Y} 
(Ri_{\Delta *}\OO^{t\,(0,n)}_{X^2})\right).
\end{equation}

\

Set $\Lambda' = \Theta \cup \mathrm{m}^*(\Lambda,\Theta)$. Then $(\Lambda', \preceq)$
also becomes of forest type. Recall that $\hat{\chi}'$ (resp. $\hat{\chi}_2'$) denotes
the universal family $\hat{\chi}(X,(\Lambda', \preceq))$ (resp. $\hat{\chi}(X^2,(\Lambda', \preceq))$) as usual. 
We have
$$
S^*_{\hat{\chi}',X} = \underset{k \in \Lambda',\, X}\times T^*X,
$$
and the canonical projection
$$
p_{\hat{\chi}', \hat{\chi}}: S^*_{\widehat{\chi},X}  \to S^*_{\widehat{\chi}',X}.
$$
Then the following theorem says that the indices $\Lambda \setminus \Lambda'$ are redundant
for a microlocal operator.
\begin{teo}{\label{thm:redundunt_indices}}
	Assume that $i \nmid j$ holds for any $i \ne j \in \mathrm{m}^*(\Lambda,\Theta)$.
Then we have
$$
\mathscr{E}^{\varpi,f,\mathbb{R}}_{\hat{\chi}} \simeq
p^{-1}_{\hat{\chi}',\hat{\chi}}\, \mathscr{E}^{\varpi,f,\mathbb{R}}_{\hat{\chi}'}
\quad\text{and}\quad
\mathscr{E}^{\varpi,\mathbb{R}}_{\hat{\chi}} \simeq
p^{-1}_{\hat{\chi}',\hat{\chi}}\, \mathscr{E}^{\varpi,\mathbb{R}}_{\hat{\chi}'}.
$$
\end{teo}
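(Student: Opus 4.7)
The plan is to reduce both sides to multi-microlocalizations via the alternative description~\eqref{eq:muhom_micro_alt} and then to exploit functoriality under the canonical projection $\pi\colon(X^2)^{\Lambda}\to(X^2)^{\Lambda'}$ that forgets the factors indexed by $\Lambda\setminus\Lambda'$. Using~\eqref{eq:muhom_micro_alt} (together with the compatibility remark after Definition~\ref{df:multi-microlocal-operators} for the non-tempered case),
$$
\mathscr{E}^{\varpi,\R}_{\widehat{\chi}}\simeq H^n\bigl(j_\varpi^{-1}\mu_{\widehat{\chi}_Y}(Ri_{\Delta *}\OO^{(0,n)}_{X^2})\bigr)\bigm|_{S^*_{\varpi,X}},
$$
and analogously for $\mathscr{E}^{\varpi,\R}_{\widehat{\chi}'}$ with the family $\widehat{\chi}_{Y'}$ in $(X^2)^{\Lambda'}$ built by the obvious analogue of~\eqref{eq:chi_G} using only indices in $\Lambda'$; the tempered case is parallel after inserting $\rho^{-1}$ and $\OO^{t,(0,n)}_{X^2}$. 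The task therefore reduces to identifying $\mu_{\widehat{\chi}_Y}$ with the $\pi$-pullback of $\mu_{\widehat{\chi}_{Y'}}$ applied to the same input.

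Next I would analyze the geometric relationship between $\widehat{\chi}_Y$ and $\widehat{\chi}_{Y'}$ under $\pi$. For $j\in\Lambda'$, the submanifold $Y_j$ differs from $\pi^{-1}(Y'_j)$ only by additional diagonal conditions $Y_{j,k_0}=\Delta_X$ in the slots $k_0\in\Lambda\setminus\Lambda'$ with $j\preceq k_0$ (since $k_0\notin\Theta$ forces $\Delta^{\varpi_{k_0}}=\Delta_X$). For $k\in\Lambda\setminus\Lambda'$, a forest-theoretic check based on Lemmas~\ref{lem:m-star-1} and~\ref{lem:m-star-2} yields $\pi(Y_k)=\bigcap_{k^*}Y'_{k^*}$, the intersection running over $k^*\in\mathrm{m}^*(\Lambda,\Theta)$ with $k\preceq k^*$, and the incomparability hypothesis implies that this intersection is transverse: two incomparable $k^*_1,k^*_2\in\mathrm{m}^*(\Lambda,\Theta)$ impose diagonal conditions in disjoint slots of $\Lambda'$, as the forest condition forbids any common $j\in\Lambda'$ above both. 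The extra diagonal conditions in slots of $\Lambda\setminus\Lambda'$ are then redundant with those already imposed by the other members of $\widehat{\chi}_Y$ indexed by $\Lambda\setminus\Lambda'$.

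Applying Proposition~\ref{prop:inverse_image} (and its tempered analogue via Propositions~3.2.9--3.2.11 of~\cite{Pr13}) iteratively, peeling off one index of $\Lambda\setminus\Lambda'$ at a time, each step yields an isomorphism through the smoothness of the induced map $T_{\widehat{\chi}}\pi$ and produces a pullback in the corresponding conormal factor. Composing these isomorphisms and tracking the total map on zero sections as $p_{\widehat{\chi}',\widehat{\chi}}$ gives the asserted pullback identification, both in the $\mathscr{E}^{\varpi,\R}_{\widehat{\chi}}$ and in the $\mathscr{E}^{\varpi,f,\R}_{\widehat{\chi}}$ versions.

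The main obstacle is the cleanness and smoothness verification along the iterative reduction. The incomparability of distinct elements of $\mathrm{m}^*(\Lambda,\Theta)$ is precisely what guarantees that the intersections $\bigcap_{k^*}Y'_{k^*}$ arising above are transverse, so that Proposition~\ref{prop:inverse_image}(ii) applies uniformly. Without this hypothesis, two comparable $\mathrm{m}^*$-elements would correspond to nested submanifolds of $\widehat{\chi}_{Y'}$ whose intersection is non-clean, obstructing the pullback identification and allowing genuine microlocal information to persist in the extra conormal directions.
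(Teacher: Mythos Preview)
Your reduction to the alternative description \eqref{eq:muhom_micro_alt} is correct and matches the paper, but the functoriality step has a genuine gap. Proposition~\ref{prop:inverse_image} compares $\mu^{sa}_{\chi^M}(f^!F)$ with a pullback of $\mu^{sa}_{\chi^N}(F)$ for a sheaf $F$ living \emph{downstairs}. In your setting the input is $Ri_{\Delta*}\OO^{(0,n)}_{X^2}$, which is supported on the small diagonal of $(X^2)^\Lambda$ and is not of the form $\pi^{-1}F$ or $\pi^!F$ for any $F$ on $(X^2)^{\Lambda'}$; rather one has $R\pi_!Ri_{\Delta*}G\simeq Ri'_{\Delta*}G$. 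If you then try Proposition~\ref{prop:direct_image} instead, the hypothesis $M_k=f^{-1}(N_k)$ fails: as you yourself note, $Y_j\subsetneq\pi^{-1}(Y'_j)$ whenever there exists $k_0\in\Lambda\setminus\Lambda'$ with $j\preceq k_0$. Moreover the two families $\widehat{\chi}_Y$ and $\widehat{\chi}_{Y'}$ do not even have the same cardinality, so neither functoriality proposition applies in one shot, and your ``peeling off one index at a time'' does not fit the shape of those statements either.

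The paper avoids this obstruction by going in the \emph{opposite} direction: instead of the smooth projection $\pi$ one uses a closed embedding $j\colon(X^2)^{\Lambda'}\hookrightarrow(X^2)^\Lambda$ that duplicates coordinates according to a choice $\sigma\colon\Lambda\setminus\Lambda'\to\mathrm{m}^*(\Lambda,\Theta)$ with $k\preceq\sigma(k)$ (Lemma~\ref{lem:m-star-1}). Then $j\circ i'_\Delta=i_\Delta$, the pulled-back family $\tilde{\chi}_Y=\{j^{-1}Y_k\}_{k\in\Lambda}$ lives on $(X^2)^{\Lambda'}$ with the same number of elements as $\widehat{\chi}_Y$, and Proposition~\ref{prop:multimicro-proper} (via Proposition~3.18 of \cite{HP24}) gives the isomorphism. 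The crucial remaining step, entirely absent from your sketch, is the elimination of the redundant members $\tilde{Y}_k$ ($k\in\Lambda\setminus\Lambda'$): this is Lemma~\ref{lem:remove_duplicate}, which needs the notion of ``transitive type'' and equal rank from \cite{HP24}. It is precisely at this point that the incomparability hypothesis on $\mathrm{m}^*(\Lambda,\Theta)$ is used, to ensure $\tilde{\chi}_Y$ is transitive and $\tilde{Y}_k=\iota_{\tilde{\chi}_Y}(\tilde{Y}_k)$ for $k\notin\Lambda'$.
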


To show the theorem, we need the following lemma:
Let $\chi = \{N_1,\cdots,N_\ell\}$ be a family of real analytic 
closed and connected submanifolds in $X$, and
let $\chi'$ be a subfamily of $\chi$.
Recall the definitions of ``transitive type'' and ``the rank of a family'' introduced
in Definition 1.8 \cite{HP24} and after the same definition, respectively.
Note that, if $\chi$ satisfies the condition H2 and it is of transitive type,
then $S_\chi$ has a vector bundle structure over $N= N_1 \cap \cdots \cap N_\ell$
by Theorem 1.9 \cite{HP24} (see also Proposition 1.10 \cite{HP24}). 
Hence, in this case, we can define
the multi-microlocalization $\mu_\chi^{sa}(\bullet)$ along $\chi$ by 
applying the (multi-)Fourier transform to $\nu_{\chi}^{sa}(\bullet)$ in the same way
as the usual one.
\begin{lem}{\label{lem:remove_duplicate}}
Assume that $\chi$ satisfies the condition H2.
Furthermore we also assume that $\chi$ and $\chi'$ are of transitive type
and the rank of the family $\chi$ and that of $\chi'$ are the same.
Then we have $S_{\chi} = S_{\chi'}$ and 
$$
\nu^{sa}_\chi(F) \simeq \nu^{sa}_{\chi'}(F).
$$
As a consequence, we have $S^*_{\chi} = S^*_{\chi'}$ and
$$
\mu^{sa}_\chi(F) \simeq \mu^{sa}_{\chi'}(F).
$$
\end{lem}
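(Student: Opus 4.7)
The plan is to work locally via condition H2 and then globalize. Fix a point $p \in X$ and choose local coordinates $(x_1,\dots,x_n)$ together with subsets $I_j \subset \{1,\dots,n\}$ (for indices $j$ with $p \in N_j$) such that $N_j = \{x_i = 0 : i \in I_j\}$ near $p$. The transitive-type hypothesis on $\chi$ means that these $I_j$ are totally ordered by inclusion; after collapsing duplicates we obtain a strict chain $I_{j_1} \subsetneq \cdots \subsetneq I_{j_r}$, whose length $r$ is the rank of $\chi$ at $p$. Since $\chi' \subset \chi$ has the same rank and is itself of transitive type, the strict chain produced by $\chi'$ must coincide with this one, and every index in $\chi \setminus \chi'$ has an $I_j$ that duplicates an $I$ already occurring in $\chi'$.

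Next I would deduce $S_\chi = S_{\chi'}$. By Theorem 1.9 and Proposition 1.10 of \cite{HP24}, under H2 and transitive type the multi-normal cone $S_\chi$ is a vector bundle over $N = N_1 \cap \cdots \cap N_\ell$ whose fiber is built out of the successive normal quotients attached to the strict chain of $I_j$'s. Since $\chi$ and $\chi'$ produce the same strict chain, the two vector bundles coincide. At the level of multi-normal deformations this is more than a bijection: the duplicated deformation parameters $t_j$ for indices $j \in \chi \setminus \chi'$ can be intrinsically identified with the $t_{j'}$ already present in $\widetilde{X}_{\chi'}$, yielding a canonical morphism $\widetilde{X}_\chi \to \widetilde{X}_{\chi'}$ over $X$ that restricts to the identity on zero sections and sends $\Omega_\chi = \{t_1>0,\dots,t_\ell>0\}$ to $\Omega_{\chi'}$.

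To obtain $\nu^{sa}_\chi(F) \simeq \nu^{sa}_{\chi'}(F)$, I would check the isomorphism stalkwise on $S_\chi = S_{\chi'}$ using the stalk formula of \cite{HP13}. Writing $\widetilde{p}_\chi : \widetilde{X}_\chi \to X$ and $\widetilde{p}_{\chi'} : \widetilde{X}_{\chi'} \to X$, the stalk at a point of $S_\chi$ is the inductive limit of $R\Gamma_{\Omega_\chi \cap \widetilde{U}}(\widetilde{p}_\chi^{-1}F)$ over subanalytic neighborhoods $\widetilde{U}$, and similarly for $\chi'$. The comparison morphism above identifies these two systems of normal wedges, because the redundant $t_j$ cut out the same loci as the non-redundant ones. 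The two inductive limits therefore coincide, functorially in $F$. The statement $S^*_\chi = S^*_{\chi'}$ and $\mu^{sa}_\chi(F) \simeq \mu^{sa}_{\chi'}(F)$ then follows immediately by applying the multi-Fourier–Sato transform, which is an equivalence.

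The main technical obstacle will be making the identification of the redundant parameters $t_j$ intrinsic, i.e., showing that the comparison morphism $\widetilde{X}_\chi \to \widetilde{X}_{\chi'}$ constructed locally does not depend on the H2 coordinates or on which representative of each duplicated $I_j$ is chosen, and that it is compatible with the functor $R\Gamma_{\Omega}\widetilde{p}^{-1}(\cdot)$ used to define specialization. This requires a careful inspection of the multi-normal deformation under the transitive-type hypothesis, using the vector bundle description given by Proposition 1.10 of \cite{HP24}.
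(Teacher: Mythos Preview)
Your approach and the paper's diverge at the key step of producing a canonical morphism between $\nu^{sa}_{\chi'}(F)$ and $\nu^{sa}_\chi(F)$. The paper does \emph{not} attempt to build a map $\widetilde{X}_\chi \to \widetilde{X}_{\chi'}$ of deformation spaces. Instead it factors the specialization: writing $p'':\widetilde{X}_{\chi\setminus\chi'}\to X$ for the deformation along the redundant submanifolds, one starts from the unit $F\to R(p'')_*(p'')^{-1}F$, applies $\nu^{sa}_{\chi'}$, invokes the functoriality of multi-specialization under maps (Proposition~3.18 of \cite{HP24}) to reach $R(T_{\chi'}p'')_*\,\nu^{sa}_{\chi'}((p'')^{-1}F)$, and then restricts through $R\Gamma_{\Omega''}$ to the zero section $\lambda''=0$ to land in $\nu^{sa}_\chi(F)$. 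This chain is intrinsic from the outset; the isomorphism is then checked by the stalk formula (Theorem~3.8 and Corollary~2.54 of \cite{HP24}). The advantage is exactly that it sidesteps the globalization problem you flag as your ``main technical obstacle''.

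Your route could in principle be completed, but as written it has a genuine gap: a stalkwise coincidence of the two inductive systems is not a morphism of sheaves, so you must first \emph{construct} a global morphism and only then verify it is an isomorphism on stalks. You defer this to a local map of deformation spaces whose coordinate-independence you do not establish, and you would still need to show that such a map intertwines the functors $R\Gamma_{\Omega}(\widetilde{p}^{-1}\,\cdot\,)|_{S}$ on both sides. A secondary concern: you read ``transitive type'' as ``the $I_j$ are totally ordered by inclusion'', but in the application (proof of Theorem~\ref{thm:redundunt_indices}) the family $\tilde{\chi}_Y$ to which this lemma is applied is not totally ordered, so the notion in \cite{HP24} is broader than a chain; your chain-based description of the rank and of $S_\chi$ would need to be adjusted accordingly.
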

\begin{proof}
We may assume $\chi'= \{N_1,N_2,\cdots,N_{\ell-d}\}$ for some $0 \le d \le \ell -1$. 
Set 
$$
\lambda = (\lambda_1,\cdots,\lambda_{\ell-d},\lambda_{\ell-d+1},\cdots, \lambda_\ell) 
= (\lambda',\lambda''),
$$
and let $p:\widetilde{X}_\chi \to X$ be the canonical map of the multi-normal deformation
along $\chi$.
Since the rank of the families are the same, we see $S_{\chi} = S_{\chi'}$.
As the families are of transitive type,
it follows from Theorem 1.9 \cite{HP24} that
$S_{\chi}$ and $S_{\chi'}$ are vector bundles over 
$N = N_1 \cap \cdots \cap N_{\ell -d} = N_1 \cap \cdots \cap N_\ell$. 

Let us consider the canonical map 
$
p'' : \tilde{X}_{\chi \setminus \chi'} \to X
$
of the multi-normal deformation along $\chi \setminus \chi'$ and the morphism 
$$
F \to R(p'')_*(p'')^{-1} F.
$$
Note that $p''(x,\lambda'') = p(x,1',\lambda'')$ holds, where $1'=(1,1,\cdots,1)$.
Now let us apply multi-specialization along $\chi'$ to the above morphism.
It is easy to see that
$$
T_{\chi'}p'':
S^{\tilde{X}_{\chi \setminus \chi'}}_{\chi'} \simeq X \times \mathbb{R}^d
\to 
S^{X}_{\chi'} \simeq X
$$
is also given by
$$
(x,\lambda'') \mapsto p''(x,\lambda'') = p(x, 1',\lambda'').
$$
It follows from Proposition 3.18 \cite{HP24} that we get the morphism
$$
\nu^{sa}_{\chi'}(F) \to \nu^{sa}_{\chi'}(R(p'')_*(p'')^{-1}F) \to
R(T_{\chi'}p'')_* \nu^{sa}_{\chi'}((p'')^{-1}F).
$$

Let $G$ be a sheaf on $\widehat{S}:=S^{\tilde{X}_{\chi \setminus \chi'}}_{\chi'} \simeq X \times \mathbb{R}^d$
which is multi-conic with respect to the action 
$\mu_{\widehat{S}} \simeq \mu' \times \mathrm{id}_{\mathbb{R}^d}$ 
(here $\mu'$ is the action on $X$ associated with the family $\chi'$)
and $V \subset X$ a subanalytic open subset which is also multi-conic with respect to
the action $\mu_{S_{\chi'}}\simeq \mu'$. We have
$$
\Gamma(V;\, (T_{\chi'}p'')_* G) \to 
\Gamma(V;\, (T_{\chi'}p'')_* \Gamma_{\Omega''}G) \simeq
\Gamma((T_{\chi'}p'')^{-1}(V);\, \Gamma_{\Omega''}G) \to
\Gamma(V;\, (\Gamma_{\Omega''}G)|_{X \times \{0\}}),
$$
where $\Omega'' = \{(x,\lambda'') \in 
	\widehat{S} \simeq X \times \mathbb{R}^d;\,
\lambda_{\ell-d+1} > 0,\cdots, \lambda_{\ell} > 0\}$.
Note that the third morphism is just the restriction because
the actions on $X$ associated with $\chi$ and $\chi'$ coincide,
and thus, we have 
$$
(T_{\chi'}p'')^{-1}(V) \cap \Omega'' = (V \times \mathbb{R}^d) \cap \Omega''.
$$
Therefore we have obtained the morphism of multi-conic sheaves
$$
(T_{\chi'}p'')_* G \to (\Gamma_{\Omega''}G)|_{X \times \{0\}}.
$$
Summing up we have the morphism,
$$
\nu^{sa}_{\chi'}(F) \to 
R(T_{\chi'}p'')_* \nu^{sa}_{\chi'}((p'')^{-1}F) \to
(R\Gamma_{\Omega''}\nu^{sa}_{\chi'}((p'')^{-1}F))|_{X \times \{0\}}
\to \nu^{sa}_{\chi}(F).
$$

Finally it follows from Theorem 3.8 and Corollary 2.54 in \cite{HP24} that 
the above morphism is isomorphic.
\end{proof}

Now we give the proof of the theorem.

\begin{proof}
Recall the family $\widehat{\chi}_Y$ defined by \eqref{eq:chi_G}.
We also denote by $\widehat{\chi}'_Y$ the corresponding one with respect to
$(\Lambda',\preceq)$.
Then we have
$$
\mathscr{E}^{\varpi,f,\mathbb{R}}_{\hat{\chi}} =
\rho^{-1}\mu^{sa}_{\widehat{\chi}_Y} (Ri_{\Delta *}\OO^{t\,(0,n)}_{X^2}).
$$
We define $\sigma: \Lambda \setminus \Lambda' \to \mathrm{m}^*(\Lambda,\Theta)$ as follows:
For $k \in \Lambda\setminus \Lambda'$, $\sigma(k)$ is taken so that
$\sigma(k) \in \mathrm{m}^*(\Lambda,\Theta)$ and $k \preceq \sigma(k)$ hold.
Such a $\sigma(k)$ is not unique, however it surely exists by Lemma {\ref{lem:m-star-1}}.
Then the map $j: \times_{k \in \Lambda'} X^2 \to 
\times_{k \in \Lambda} X^2$ is defined by
$$
j((x^{(0)}; \{x^{(k)}\}_{k \in \Lambda'})) = (x^{(0)}; 
\{y^{(k)}\}_{k \in \Lambda}\})
$$
with
$$
y^{(k)} =
\begin{cases}
	x^{(k)} \qquad & (k \in \Lambda'),\\
x^{(\sigma(k))} \qquad & (k \in \Lambda \setminus \Lambda')).
\end{cases}
$$
We denote by $i'_\Delta: X^2 \to \times_{k \in \Lambda'} X^2$ the diagonal
embedding. Then clearly $j \circ i'_\Delta  = i_\Delta$ holds.
Define the family of submanifolds in $\times_{k \in \Lambda'} X^2$ by
$$
\tilde{\chi}_Y = \{\tilde{Y}_1,\cdots, \tilde{Y}_\ell\} = 
\{j^{-1}(Y_1),\cdots, j^{-1}(Y_\ell)\}.
$$
We see that $\tilde{\chi}_Y$ is of transitive type and
it satisfies the condition H2.
Note that we have $\tilde{Y}_k = \iota_{\tilde{\chi}_Y}(\tilde{Y}_k)$
for $k \in \Lambda \setminus \Lambda'$.
Since the map $j$ satisfies the condition of
Proposition \ref{prop:multimicro-proper} 
and since Proposition \ref{prop:multimicro-proper} still holds in this situation 
thanks to Proposition 3.18 \cite{HP24}, we obtain the isomorphism
$$
\mu^{sa}_{\widehat{\chi}_Y} (Ri_{\Delta *}\OO^{t\,(0,n)}_{X^2}) \simeq
\tilde{p}^{-1}\mu^{sa}_{\tilde{\chi}_Y} (Ri'_{\Delta *}\OO^{t\,(0,n)}_{X^2}),
$$
where $\tilde{p}$ is the canonical projection
$$
\tilde{p}: S^*_{\widehat{\chi}_Y} = 
\underset{k \in \Lambda}{\times} T^*_{\Delta^{\varpi_k}}X^2 \longrightarrow
\underset{k \in \Lambda'}{\times} T^*_{\Delta^{\varpi_k}}X^2 = S^*_{\tilde{\chi}_Y}.
$$
Finally, by applying Lemma \ref{lem:remove_duplicate} to $\tilde{\chi}_Y$, 
we obtain the desired result.
\end{proof}

\begin{es}{\label{es:t-s-op}}
Let $(\Lambda=\{1,2\},\preceq)$ be of totally ordered type, i.e., $2 \precneq 1$ and
set $\hat{\chi} = \hat{\chi}(X, (\Lambda, \preceq))$.

We first consider the case $\Theta = \emptyset$ and $\varpi = \emptyset$ (thus
$\mathrm{m}^*(\Lambda,\Theta) = \{1\}$).
Then $\mathscr{E}^{\emptyset,\mathbb{R}}_{\hat{\chi}}$ is nothing but the sheaf
$\mathscr{E}^\mathbb{R}_X$ of the usual microlocal operators, precisely speaking, we have
$$
\mathscr{E}^{\emptyset,\mathbb{R}}_{\hat{\chi}} = \pi_1^{-1} \mathscr{E}^{\mathbb{R}}_X,
$$
where $\pi_1: T^*X \times_X T^*X \to T^*X$ is the canonical projection with respect to
the first $T^*X$.

Now let us consider the case $\Theta =\{1\}$ and $\varpi=\{\varpi_1\}$ with
a submersive map $\varpi_1$ from $X$ to some complex manifold $S_1$ (thus
$\mathrm{m}^*(\Lambda,\Theta) = \{2\}$).
Then $\mathscr{E}^{\varpi,\mathbb{R}}_{\hat{\chi}}$ is a sheaf on
$S^*_{\varpi,X} =  T^*{S_1} \times_{S_1} T^*X$,
which is called the sheaf of small second microlocal operators of totally ordered type.
\end{es}
\begin{es}
Let $(\Lambda=\{1,2\},\preceq)$ be of normal crossing type, i.e., $1 \nmid 2$, and
set $\hat{\chi} = \hat{\chi}(X, (\Lambda, \preceq))$.

Let us consider the case $\Theta = \emptyset$ and $\varpi = \emptyset$
(thus $\mathrm{m}^*(\Lambda,\Theta) = \{1,2\}$).
As opposed to totally ordered case,
the sheaf $\mathscr{E}^{\emptyset,\mathbb{R}}_{\hat{\chi}}$ is different from
the one $\mathscr{E}^\mathbb{R}_X$ of the usual microlocal operators.
In fact, a microlocal operator in $\mathscr{E}^\mathbb{R}_X$ does not act on
a multi-microfunction of normal crossing type. Note that, as we see later, 
$\mathscr{E}^{\emptyset,\mathbb{R}}_{\hat{\chi}}$ acts 
on a multi-microfunction of normal crossing type.
\end{es}

Now we study a multi-microlocal action of $\mathscr{E}^{\varpi,\mathbb{R}}_{\hat{\chi}}$.
In what follows, we extend the family $\{\varpi_k\}_{k \in \Theta}$ to 
$\{\varpi_k\}_{k \in \Lambda}$ by
$\varpi_k = \operatorname{id}_X: X \to S_k:= X$ for $k \notin \Theta$.
Let $F_k \in D^b_{\rcc}(X)$ ($k \in \Lambda$) satisfying the condition: 
\begin{itemize}
	\item[($\dagger$)]\,\, 
There exists an $\tilde{F}_k \in D^b_{\rcc}(S_k)$ such that
\begin{equation}
	F_k \simeq \varpi^{-1}_k \tilde{F}_k.
\end{equation}
\end{itemize}
Note that, if $k \notin \Theta$, then no condition is imposed on each $F_k$
because of $\varpi_k = \mathrm{id}_X$.
\begin{es}
	Let $T_k$ $(k \in \Lambda)$ be closed subanalytic subsets in $X$.
We assume the following condition:
\begin{itemize}
	\item[$(\dagger')$]\,\, 
$T_k$ is union of
fibers of $\varpi_k$ for any $k \in \Theta$, that is,
\begin{equation}
T_k = \varpi_k^{-1}(\varpi_k(T_k))	\qquad (k \in \Theta).
\end{equation}
\end{itemize}
Set $F_k = \mathbb{C}_{T_k}$ $(k \in \Lambda)$. Then each $F_k$ satisfies the 
condition $(\dagger)$.
\end{es}
Recall that $i_{\widehat{\chi}_{21}}:S^*_{\widehat{\chi},X} \hookrightarrow S^*_{\widehat{\chi}_2}$ and the closed subset $\Delta^{\varpi_k} \subset X^2$ ($k \in \Lambda$) were defined in Definition {\ref{df:multi-microlocal-operators}} and {\eqref{eq:delta_varpi}},
respectively.
\begin{teo}
Under the above situation, we have the canonical morphisms
$$
\begin{aligned}
	&\muhom_{\widehat{\chi}_2}(\mathbb{C}_{\Delta^{\varpi_1}},\mathbb{C}_{\Delta^{\varpi_2}},\dots,\mathbb{C}_{\Delta^{\varpi_\ell}};\,
\OO^{(0,n)}_{X^2})[n]
\,\,\otimes\,\,
\muhom_{\widehat{\chi}}(F_1,F_2,\dots,
F_\ell;\,
\OO_{X})  \\
&\qquad\qquad\longrightarrow
\muhom_{\widehat{\chi}}(F_1,F_2,\dots,
F_\ell;\,
\OO_{X})
\end{aligned}
$$
and
$$
\begin{aligned}
	&\muhom^{sa}_{\widehat{\chi}_2}
(\mathbb{C}_{\Delta^{\varpi_1}},\mathbb{C}_{\Delta^{\varpi_2}},\dots,
\mathbb{C}_{\Delta^{\varpi_\ell}};\,
\OO^{t\,(0,n)}_{X^2})[n]
\,\,\otimes\,\,
\muhom^{sa}_{\widehat{\chi}}(F_1,F_2,\dots,
F_\ell;\,
\OO^t_{X})  \\
&\qquad\qquad\longrightarrow
\muhom^{sa}_{\widehat{\chi}}(F_1,F_2,\dots,
F_\ell;\,
\OO^t_{X}).
\end{aligned}
$$
\end{teo}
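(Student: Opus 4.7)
The construction is a multi-microlocal analog of the classical definition of the action of $\mathscr{E}^{\R}_X$ on microfunctions in Chapter 11 of \cite{KS90}. I would assemble the action morphism as the composition of three canonical pieces.

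First, I would apply the composition morphism of Proposition \ref{prop:composition_muhom}, specialized to the triple of ambient manifolds $(X, X, \{\mathrm{pt}\})$, with kernel data $(F^1_j, G^1) = (\CC_{\Delta^{\varpi_j}}, \OO^{(0,n)}_{X^2})$ on $X \times X$ and section data $(F^2_j, G^2) = (F_j, \OO_X)$ on $X \simeq X \times \{\mathrm{pt}\}$. The direct-image and pullback functors along the various $q_{ij}$ appearing in Proposition \ref{prop:composition_muhom} collapse favorably in this degenerate case, and the identification \eqref{eq:estimate_S_pi} together with Lemma \ref{lem:restriction_ok} ensure that the $\muhom_{\widehat{\chi}_2}$-factor is already supported on the image of $S^*_{\widehat{\chi}, X}$ under $i_{\widehat{\chi}_{21}}$, so that the output is a canonical pairing landing in $\muhom_{\widehat{\chi}}(\CC_{\Delta^{\varpi_1}} \circ F_1, \dots, \CC_{\Delta^{\varpi_\ell}} \circ F_\ell;\, \OO^{(0,n)}_{X^2} \circ \OO_X)$ on $S^*_{\widehat{\chi}, X}$.

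Next, I would construct two canonical morphisms that bring this target back to $\muhom_{\widehat{\chi}}(F_1, \dots, F_\ell; \OO_X)[n]$. Since $\Delta_X \subset \Delta^{\varpi_j}$ is a closed submanifold for each $j$, the restriction-to-diagonal morphism $\CC_{\Delta^{\varpi_j}} \to \CC_{\Delta_X}$ exists canonically on $X^2$; composing with $F_j$ yields $\CC_{\Delta^{\varpi_j}} \circ F_j \to \CC_{\Delta_X} \circ F_j = F_j$, and contravariance of $\muhom_{\widehat{\chi}}$ in the first family of arguments transports this. For the coefficient, the integration of top-degree holomorphic forms produces the Cauchy-type kernel morphism $\OO^{(0,n)}_{X^2} \circ \OO_X \to \OO_X[n]$ underlying the classical definition of $\mathscr{E}^{\R}_X$, and covariance of $\muhom_{\widehat{\chi}}$ in the coefficient argument transports this to the $\muhom$-level.

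Chaining the three pieces gives the action morphism for the non-tempered case; the tempered version follows by running the same argument with $\OO^{t\,(0,n)}_{X^2}$ and $\OO^t_X$ in place of $\OO^{(0,n)}_{X^2}$ and $\OO_X$, together with the tempered Cauchy-kernel morphism, whose existence is part of the standard functoriality of $\OO^t$ on subanalytic sites developed in \cite{KS90, Pr08, Pr13}. The principal technical difficulty lies not in any single step but in verifying that each of the morphisms descends to $S^*_{\widehat{\chi}, X}$ compatibly with the closed embeddings $i_{\widehat{\chi}_{21}}$ and $i_{\widehat{\chi}_{1\varpi}}$ and the identifications in Lemma \ref{lem:restriction_ok}; condition $(\dagger)$ enters precisely to guarantee this compatibility for the $\varpi$-enhanced objects, and in the tempered case one additionally needs that the integration morphism preserves temperedness.
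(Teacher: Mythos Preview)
Your overall architecture is the same as the paper's: apply Proposition~\ref{prop:composition_muhom} with $Y=X$ and $Z$ a point, then feed in the integration morphism $\OO^{(0,n)}_{X^2}\circ\OO_X[n]\to\OO_X$ (and its tempered analogue). That part is fine.

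The genuine gap is in your second step. The morphism $\CC_{\Delta^{\varpi_j}}\circ F_j\to\CC_{\Delta_X}\circ F_j\simeq F_j$ points the wrong way for contravariance: since $\muhom_{\widehat{\chi}}(\,\cdot\,,\dots;\OO_X)$ is contravariant in the first slot, this map yields
\[
\muhom_{\widehat{\chi}}(F_1,\dots;\OO_X)\longrightarrow
\muhom_{\widehat{\chi}}(\CC_{\Delta^{\varpi_1}}\circ F_1,\dots;\OO_X),
\]
not the direction you need to close the diagram. Moreover, $\CC_{\Delta^{\varpi_j}}\circ F_j$ is a proper pushforward along the (generally non-compact) fibres of $\varpi_j$, so there is no reason for it to be isomorphic to $F_j$ without further work.

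The paper handles this by first replacing each $\CC_{\Delta^{\varpi_j}}$ with $\CC_{\hat G_j}$, where $\hat G_j=\Delta^{\varpi_j}\cap K_j$ for a closed subanalytic neighbourhood $K_j$ of $\Delta_X$ that is proper over $X$ via $q_1$ and whose intersection with each fibre $\{x\}\times\varpi_j^{-1}(\varpi_j(x))$ is contractible. Lemma~\ref{lem:fundamental_restriction_ok} shows this replacement does not change the $\muhom_{\widehat{\chi}_2}$-factor. With the cut-off in place, condition $(\dagger)$ ($F_j\simeq\varpi_j^{-1}\tilde F_j$) gives a stalkwise computation
\[
H^i(\CC_{\hat G_j}\circ F_j)_x\simeq
R^i\Gamma\bigl((\{x\}\times\varpi_j^{-1}(\varpi_j(x)))\cap K_j;\,\varpi_j^{-1}\tilde F_j\bigr)
\simeq H^i(\tilde F_j)_{\varpi_j(x)}\simeq H^i(F_j)_x,
\]
so $\CC_{\hat G_j}\circ F_j\to F_j$ is an \emph{isomorphism}, and the contravariance issue evaporates. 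Thus condition $(\dagger)$ is not a side compatibility with the embeddings $i_{\widehat{\chi}_{21}}$, $i_{\widehat{\chi}_{1\varpi}}$ as you suggest; it is the substantive hypothesis that makes the kernel composition reduce to the identity.
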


\begin{proof}
Let $q_1$ denote the projection $X \times X \ni (x,y) \mapsto x \in X$.
Let $K_j \subset X^2$ ($j \in \Lambda$) 
be a subanalytic closed neighborhood of $\Delta_X$ such that
$q_1|_{K_j}: K_j \to X$ is proper and, if $j \in \Theta$,
the subset
$$
(\{x\} \times \varpi_j^{-1}(\varpi_j(x))) \cap K_j \,\subset\, \{x\}\times X \subset X^2
$$
is contractible for any $x \in X$.
Note that always we can construct such a $K_j$ by using a real analytic metric on
$X$ and its induced metric on each fiber of $\varpi_j$.
Since there exists $k \in \Lambda$ such that $\Delta^{\varpi_k} = \Delta_X$
(the condition 1.~of Definition \ref{df:stable_subset}),
we have $\Delta^{\varpi_1} \cap \cdots \cap \Delta^{\varpi_\ell} = \Delta_X$ 
from which we get
$$
X^{2\ell} \times X^{2\ell}\,\supset\,
\left(\left(
\overline{\left(\Delta^{\varpi_1} \times \cdots \times \Delta^{\varpi_\ell}\right)
\setminus (K_1 \times \cdots \times K_\ell)}\right) \times i_\Delta(X^2)\right)
\,\bigcap\, (\Delta_{X^2})^\ell = \emptyset.
$$
Hence it follows form Lemma \ref{lem:fundamental_restriction_ok} that
we obtain
$$
\mu hom^{sa}_{\widehat{\chi}_2}(\CC_{\Delta^{\varpi_1}},
\dots,\CC_{\Delta^{\varpi_\ell}};
\OO^{t\,(0,n)}_{X^2})
\simeq
\mu hom^{sa}_{\widehat{\chi}_2}(\CC_{\hat{G}_1},\dots,\CC_{\hat{G}_\ell};
\OO^{t\,(0,n)}_{X^2}),
$$
where we set $\hat{G}_j = \Delta^{\varpi_j} \cap K_j$ for $j \in \Lambda$.

The canonical closed embedding $\Delta_X \hookrightarrow \hat{G}_j$ 
induces the morphism
$$
\CC_{\hat{G}_j} \circ F_j \longrightarrow \CC_{\Delta_X} \circ F_j \simeq F_j.
$$
For any $x \in X$ and $j \in \Theta$, we have
$$
H^i(\CC_{\hat{G}_j} \circ F_j)_x \simeq
R^i\Gamma ((\{x\} \times \varpi_j^{-1}(\varpi_j(x))) \cap K_j; \varpi_j^{-1}\tilde{F}_j)
\simeq H^i(\tilde{F}_j)_{\varpi_j(x)} \simeq H^i(F_j)_x,
$$
where we identify 
$(\{x\} \times \varpi_j^{-1}(\varpi_j(x))) \cap K_j$ with a subset in $X$.
Hence we have 
$
\CC_{\hat{G}_k} \circ F_k
\simeq
F_k
$
for $k \in \Theta$, 
and $\CC_{\hat{G}_k} \circ F_k = \CC_{\Delta_X} \circ F_k \simeq F_k$ for $k \notin \Theta$. Moreover we have
$$
\OO_{X^2}^{t\,(0,n)} \circ \OO^t_X[n] \to \OO^t_X.
$$
Hence by noticing \eqref{eq:estimate_S_pi} and the fact
$$
S^*_{\varpi,X} \subset T^*_{\Delta_X} X^2 \times_X \cdots \times_X T^*_{\Delta_X} X^2,
$$
the result follows from Proposition \ref{prop:composition_muhom}
with $X=Y$ and $Z$ reduced to a point.
\end{proof}
By the same argument as in the above proof, we also have the following corollary:
\begin{cor}{\label{cor:ring_struct}}
We have the canonical morphisms
$$
\begin{aligned}
& \muhom_{\widehat{\chi}_2}(\mathbb{C}_{\Delta^{\varpi_1}},
\mathbb{C}_{\Delta^{\varpi_2}},\dots,\mathbb{C}_{\Delta^{\varpi_\ell}};\,
\OO^{(0,n)}_{X^2})[n]
\,\otimes\,
\muhom_{\widehat{\chi}_2}
(\mathbb{C}_{\Delta^{\varpi_1}},\mathbb{C}_{\Delta^{\varpi_2}},\dots,
\mathbb{C}_{\Delta^{\varpi_\ell}};\,
\OO^{(0,n)}_{X^2})
\\
&\qquad\qquad \to \muhom_{\widehat{\chi}_2}
(\mathbb{C}_{\Delta^{\varpi_1}},\mathbb{C}_{\Delta^{\varpi_2}},\dots,
	\mathbb{C}_{\Delta^{\varpi_\ell}};\,
\OO^{(0,n)}_{X^2})
\end{aligned}
$$
and
$$
\begin{aligned}
&\muhom^{sa}_{\widehat{\chi}_2}
(\mathbb{C}_{\Delta^{\varpi_1}},\mathbb{C}_{\Delta^{\varpi_2}},\dots,
\mathbb{C}_{\Delta^{\varpi_\ell}};\,
\OO^{t\,(0,n)}_{X^2})[n]
\,\otimes\,
\muhom^{sa}_{\widehat{\chi}_2}
(\mathbb{C}_{\Delta^{\varpi_1}},\mathbb{C}_{\Delta^{\varpi_2}},\dots,
\mathbb{C}_{\Delta^{\varpi_\ell}};\,
\OO^{t\,(0,n)}_{X^2})
\\
&\qquad\qquad \to  
\muhom^{sa}_{\widehat{\chi}_2}
(\mathbb{C}_{\Delta^{\varpi_1}},\mathbb{C}_{\Delta^{\varpi_2}},\dots,
\mathbb{C}_{\Delta^{\varpi_\ell}};\,
\OO^{t\,(0,n)}_{X^2}).
\end{aligned}
$$
\end{cor}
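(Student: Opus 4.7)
The plan is to mirror the proof of the preceding theorem line by line, replacing the second factor $\mu hom_{\widehat{\chi}}(F_1,\dots,F_\ell;\OO_X)$ by a second copy of $\mu hom_{\widehat{\chi}_2}(\mathbb{C}_{\Delta^{\varpi_1}},\dots,\mathbb{C}_{\Delta^{\varpi_\ell}};\OO^{(0,n)}_{X^2})$. Proposition \ref{prop:composition_muhom} would then be applied with all three manifolds $X$, $Y$, $Z$ equal to the original base, so that $(X\times Y)^2 = (Y\times Z)^2 = (X\times Z)^2 = (X^2)^2$ and each one carries the same universal family $\widehat{\chi}_2$; this is the analogue of the reduction $Z=\{\mathrm{pt}\}$ used before.

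First I would reuse the geometric set-up of the preceding proof verbatim. Choose subanalytic closed neighborhoods $K_j \subset X^2$ of $\Delta_X$ such that $q_1|_{K_j}$ is proper and, for $j\in\Theta$, the fibre $(\{x\}\times \varpi_j^{-1}(\varpi_j(x)))\cap K_j$ is contractible. Set $\hat G_j = \Delta^{\varpi_j}\cap K_j$. By Lemma \ref{lem:fundamental_restriction_ok}, applied exactly as in the preceding proof, the sheaf $\mathbb{C}_{\Delta^{\varpi_j}}$ may be replaced by $\mathbb{C}_{\hat G_j}$ in the \emph{left} factor without altering the corresponding $\mu hom$, and this replacement makes the integration over the middle copy of $X$ proper. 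A direct stalk computation of $\mathbb{C}_{\hat G_j}\circ\mathbb{C}_{\Delta^{\varpi_j}}$ at a point $(x_1,x_3)\in X\times X$ gives $R\Gamma_c$ of the set of $x_2\in X$ satisfying $\varpi_j(x_1)=\varpi_j(x_2)=\varpi_j(x_3)$ and $(x_1,x_2)\in K_j$: this is empty when $\varpi_j(x_1)\neq\varpi_j(x_3)$ and contractible otherwise by the choice of $K_j$, producing a natural isomorphism $\mathbb{C}_{\hat G_j}\circ\mathbb{C}_{\Delta^{\varpi_j}}\simeq\mathbb{C}_{\Delta^{\varpi_j}}$ for $j\in\Theta$ (and trivially $\mathbb{C}_{\Delta_X}\circ\mathbb{C}_{\Delta_X}\simeq\mathbb{C}_{\Delta_X}$ for $j\notin\Theta$). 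The analytic input is the standard composition morphism $\OO^{(0,n)}_{X^2}[n]\circ\OO^{(0,n)}_{X^2}\to\OO^{(0,n)}_{X^2}$ given by integration of $(0,n)$-forms along the middle copy of $X$, and its tempered analogue $\OO^{t\,(0,n)}_{X^2}[n]\circ\OO^{t\,(0,n)}_{X^2}\to\OO^{t\,(0,n)}_{X^2}$; both are already the ingredients invoked in the preceding proof.

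With these preparations, I would invoke Proposition \ref{prop:composition_muhom} for the triple $(X,X,X)$ with the three copies of $\widehat{\chi}_2$ on the three copies of $(X^2)^2$. Using the identification \eqref{eq:estimate_S_pi} and the inclusion $S^*_{\varpi,X}\subset T^*_{\Delta_X}X^2\xtimes \cdots \xtimes T^*_{\Delta_X}X^2$, the pullback--pushforward through $q^a_{13}$, $q^a_{12}$, $q^a_{23}$ collapses to the tensor product that appears on the left of the corollary, and the resulting target is $\mu hom_{\widehat{\chi}_2}(\mathbb{C}_{\hat G_j}\circ\mathbb{C}_{\Delta^{\varpi_j}};\OO^{(0,n)}_{X^2}[n]\circ\OO^{(0,n)}_{X^2})$, which maps to $\mu hom_{\widehat{\chi}_2}(\mathbb{C}_{\Delta^{\varpi_j}};\OO^{(0,n)}_{X^2})$ by the isomorphisms obtained above; the tempered version is formally identical. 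The only real obstacle is bookkeeping: one must verify that $\widehat{\chi}_2$ is a legitimate common choice of family in all three instances of Proposition \ref{prop:composition_muhom}, i.e.\ that $\tilde j^{-1}(M_k\times N_k)$ again belongs to $\widehat{\chi}_2$ and that $\tilde p_{13}$ sends it correctly to $L_k$, but this is immediate from the direct-product definition of the universal family, so no genuine new difficulty arises.
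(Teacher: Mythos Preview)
Your proposal is correct and is exactly what the paper intends: it merely states that the corollary follows ``by the same argument as in the above proof,'' and your plan supplies precisely that argument, namely applying Proposition~\ref{prop:composition_muhom} with $X=Y=Z$ (so that $\widehat{\chi}_{12}=\widehat{\chi}_{23}=\widehat{\chi}_{13}=\widehat{\chi}_2$), replacing $\mathbb{C}_{\Delta^{\varpi_j}}$ by $\mathbb{C}_{\hat G_j}$ in the left factor via Lemma~\ref{lem:fundamental_restriction_ok}, computing $\mathbb{C}_{\hat G_j}\circ\mathbb{C}_{\Delta^{\varpi_j}}\simeq\mathbb{C}_{\Delta^{\varpi_j}}$, and invoking the integration morphism $\OO^{(0,n)}_{X^2}[n]\circ\OO^{(0,n)}_{X^2}\to\OO^{(0,n)}_{X^2}$ together with the support estimate \eqref{eq:estimate_S_pi}.
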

As an immediate consequence of the above corollary, 
$\mathscr{E}^{\varpi,f,\mathbb{R}}_{\hat{\chi}}$ and
$\mathscr{E}^{\varpi,\mathbb{R}}_{\hat{\chi}}$ become sheaves of rings.

\

Let $\chi=\{M_1,\cdots,M_\ell\}$ be a family of closed real analytic submanifolds
of forest type in $X$ whose induced type is $(\Lambda,\preceq)$. Assume the condition $(\dagger)$ for $\chi$.
Then, by Theorem \ref{thm:muhom_and_mu}, we have
$$
\muhom^{sa}_{\widehat{\chi}}(\mathbb{C}_{M_1},\mathbb{C}_{M_2},\dots,
\mathbb{C}_{M_\ell};\, \OO^t_{X})
\simeq \omega^{-1}\mu^{sa}_\chi(\OO^t_X).
$$
Hence, if a family $\chi$ of the forest type $(\Lambda, \preceq)$ satisfies the condition $(\dagger)$, then it follows from the above theorems that we have the canonical morphism
$$
\mathscr{E}^{\varpi,f,\mathbb{R}}_{\hat{\chi}} \otimes 
\rho^{-1}\omega^{-1}\mu^{sa}_\chi(\OO^t_X) \longrightarrow
\rho^{-1}\omega^{-1}\mu^{sa}_\chi(\OO^t_X),
$$
which defines the action of $\mathscr{E}^{\varpi,f,\mathbb{R}}_{\hat{\chi}}$ on
tempered multi-microfunctions along $\chi$. In the same way, we have
$$
\mathscr{E}^{\varpi,\mathbb{R}}_{\hat{\chi}} \otimes 
\omega^{-1}\mu_\chi(\OO_X) \longrightarrow
\omega^{-1}\mu_\chi(\OO_X).
$$

\begin{es}{\label{es:t-s-ac}}
Let $(\Lambda=\{1,2\}, \preceq)$ be of totally ordered type, i.e., $2 \precneq 1$ and
let $\chi = \{M_1, M_2\}$ be a family of real analytic closed submanifolds 
of forest type in $X$
whose type is $(\Lambda, \preceq)$. 

Let $\Theta = \emptyset$. Then
a microlocal operator $P \in \mathscr{E}^{\emptyset,\mathbb{R}}_{\hat{\chi}}$ 
acts on the cohomology groups $H^k(\mu_\chi(\OO_X))$.

Let us consider the case $\Theta =\{1\}$ and $\varpi=\{\varpi_1\}$ with
a submersive map $\varpi_1$ from $X$ to some complex manifold $S_1$.
If $M_1$ satisfies the condition $(\dagger)'$, i.e., there exists a real analytic
submanifold $N \subset S_1$ such that $M_1 = \varpi_1^{-1}(N)$, then
a small second microlocal operator $P \in \mathscr{E}^{\varpi,\mathbb{R}}_{\hat{\chi}}$ 
of totally ordered type acts on the cohomology groups $H^k(\mu_\chi(\OO_X))$.
\end{es}

\appendix
\begin{section}{Appendix}
\subsection{A compatible family of submersive morphisms}


Let us consider a family $\{\varpi_k:X \to S_k\}_{k \in \Theta}$ of submersive morphisms,
where $S_k$ is a real analytic manifold and $\varpi_k:X \to S_k$ is a submersion.

We say that a family $\{\varpi_k\}$ is compatible with respect to the type $(\Theta,\preceq)$ if
there exists a family of morphism $\{\iota_{k,j}:S_j \to S_k\}_{j \preceq k}$ satisfying
the following conditions:
\begin{enumerate}
\item We have, for $j \preceq k$,
\begin{equation}{\label{eq:ap-xxx-1}}
\varpi_k = \iota_{k,j} \circ \pi_j.
\end{equation}
\item 
$\iota_{\ell,j} = \iota_{\ell,k}\circ\iota_{k,j}$ holds for 
$j \preceq k \preceq \ell$.
\end{enumerate}
Note that it follows from the above condition 1.~that $\iota_{k,k} = \operatorname{id}$ and $\iota_{k,j}$ becomes a submersion.
We say that $s=\{s_k\}_{k \in \Theta}$ with $s_k \in S_k$ $(k \in \Theta)$ is a point of the family $\{\varpi_k\}$ if
$
s_k = \iota_{k,j}(s_j)
$
holds for $j \preceq k$. 
Note that, for any point $x$, $\{\varpi_k(x)\}_{k \in \Theta}$ becomes a point of $\{\varpi_k\}$ and
denote it by $\varpi(x)$.
We also says that $x \in X$ is a point over $s = \{s_k\}$ if
$
s = \varpi(x)
$
holds.

The following theorem says that, for a compatible submersive system,
simultaneously each $\varpi_k$ is trivialized with the fiber $M_k = \varpi^{-1}_k(s_k)$
(i.e., $\varpi_k: X = M_k \times S_k \to S_k$) for some $s = \{s_k\}$.
\begin{teo}{\label{thm:c-s-m-coordinate}}
Let $\{\varpi_k:X \to S_k\}_{k \in \Theta}$ be a compatible family of 
submersive morphisms with respect to $(\Theta,\preceq)$ and $\{s_k\}$ its point.
Assume that $\{M_k := \varpi^{-1}_k(s_k)\}_{k \in \Theta}$ becomes a family of real analytic closed submanifolds of the forest type $(\Lambda,\preceq)$ in $X$.
Then, for any point $q \in M = \cap_k M_k$, there exist an
open neighborhood $U$ of $q$, a system $x = (x^{(0)}; \{x^{(k)}\}_{k \in \Theta})$ of 
coordinates in $U$ 
and subsets $I_k \subset \Theta$ ($k \in \Theta$) such that
by suitable coordinate transformations of $X$ and $S_k$ $(k \in \Theta)$ we have
the followings:
\begin{enumerate}
\item $q = 0$ and $s_k = 0$ ($k \in \Theta$).
\item Each $\varpi_k$ becomes a mapping 
$$ X \ni (x^{(0)}; \{x^{(j)}\}_{j \in \Theta})
\mapsto (x^{(j)})_{j \in I_k} \in S_k.
$$
\item $M_k = \{x^{(j)}= 0\,\,(j \in I_k)\}$ holds for $k \in \Theta$ (which follows from
	the above 1.~and 2.).
\end{enumerate}
\end{teo}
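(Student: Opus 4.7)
The plan is to proceed by induction on $|\Theta|$. The base case $|\Theta| = 0$ is vacuous, and $|\Theta| = 1$ follows directly from the classical submersion theorem applied to the single map $\varpi_k$. For the inductive step, I would first decompose the forest $(\Theta, \preceq)$ into its tree components $T_1, \ldots, T_s$, each rooted at its minimum $r_i := \min T_i$. Roots of distinct trees are pairwise incomparable in $\Theta$, so by condition H2 applied to the family $\{M_k\}_{k \in \Theta}$, the fibers $\{M_{r_i}\}_{i=1}^{s}$ intersect transversally at $q$. Consequently the combined map $\Psi := (\varpi_{r_1}, \ldots, \varpi_{r_s}) : X \to \prod_i S_{r_i}$ has differential at $q$ with kernel $\bigcap_i T_q M_{r_i}$ of codimension $\sum_i \dim S_{r_i}$, and is therefore a submersion near $q$. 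The submersion theorem then furnishes coordinates $(u; v^{(1)}, \ldots, v^{(s)})$ on $X$ and coordinates on each $S_{r_i}$, all centered at the origin, such that $\varpi_{r_i}(x) = v^{(i)}$, simultaneously securing condition 1 and trivializing all root submersions.

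Next, for each tree $T_i$ I would consider the restricted family of submersive morphisms $\{\iota_{k, r_i} : S_{r_i} \to S_k\}_{k \in T_i \setminus \{r_i\}}$, which is compatible with respect to $(T_i \setminus \{r_i\}, \preceq)$ as a direct consequence of the original relations $\iota_{\ell, j} = \iota_{\ell, k} \circ \iota_{k, j}$. Since $|T_i \setminus \{r_i\}| < |\Theta|$, I would apply the induction hypothesis to this reduced compatible family on $S_{r_i}$, obtaining coordinates $(z^{(0)}; \{z^{(k)}\}_{k \in T_i \setminus \{r_i\}})$ on $S_{r_i}$ and index sets $I_k \subset T_i \setminus \{r_i\}$ in which every $\iota_{k, r_i}$ becomes a projection onto the appropriate blocks. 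Relabeling $x^{(r_i)} := z^{(0)}$, $x^{(k)} := z^{(k)}$ for $k \in T_i \setminus \{r_i\}$, and $x^{(0)} := u$, and setting $I_{r_i} := T_i = \Theta_{\succeq r_i}$, I would then assemble over all trees to conclude that $\varpi_{r_i}(x) = v^{(i)} = (x^{(j)})_{j \in I_{r_i}}$ tautologically and, via $\varpi_k = \iota_{k, r_i} \circ \varpi_{r_i}$, that $\varpi_k(x) = (x^{(j)})_{j \in I_k}$ for every other $k$. Condition 3 on the fibers then follows immediately from $M_k = \varpi_k^{-1}(0)$.

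The main obstacle will be ensuring that the induction hypothesis actually applies inside each $S_{r_i}$, that is, verifying that the fibers $\{N_k := \iota_{k, r_i}^{-1}(s_k)\}_{k \in T_i \setminus \{r_i\}}$ form a family of real analytic closed submanifolds of forest type $(T_i \setminus \{r_i\}, \preceq)$ on $S_{r_i}$. My strategy will be to exploit the local product decomposition $X \simeq M_{r_i} \times S_{r_i}$ provided by $\varpi_{r_i}$, under which $M_k = M_{r_i} \times N_k$ for each $k \in T_i$; conditions H1 and H2 on $\{N_k\}$ then transfer cleanly from the corresponding conditions on the subfamily $\{M_k\}_{k \in T_i}$ in $X$, while H3 requires somewhat more care because the associated intersection $\iota_{\{N_k\}}(N_k)$ is now computed over the strictly smaller index set $T_i \setminus \{r_i\}$ rather than $T_i$, so one has to check that removing the root does not cause any $N_k$ to coincide with its iota-closure.
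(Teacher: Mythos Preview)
Your inductive argument is correct and essentially carries out in detail what the paper only sketches: the paper's ``proof'' is the single sentence that one repeats the argument of Proposition~1.2 in \cite{HP13}, using the components of the submersions $\varpi_k$ themselves as the coordinate functions $f_{k,i}$ appearing there. Your tree-by-tree induction, peeling off the minimal elements (roots) via the joint submersion $\Psi=(\varpi_{r_1},\ldots,\varpi_{r_s})$ and then recursing inside each $S_{r_i}$ through the induced family $\{\iota_{k,r_i}\}_{k\in T_i\setminus\{r_i\}}$, is a self-contained implementation of exactly that strategy.

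Two minor remarks. First, when you invoke H2 to conclude that $\Psi$ is a submersion, be explicit that you are using not just pairwise transversality of the $M_{r_i}$ but the stronger fact (immediate from the disjointness $I_{r_i}\cap I_{r_j}=\emptyset$ in H2) that $\operatorname{codim}\bigl(\bigcap_i M_{r_i}\bigr)=\sum_i\operatorname{codim} M_{r_i}$; pairwise transversality alone would not suffice for $s\geq 3$. Second, your stated ``main obstacle'' concerning H3 for the reduced family $\{N_k\}_{k\in T_i\setminus\{r_i\}}$ is in fact no obstacle at all: for $k\in T_i\setminus\{r_i\}$ the set $\{k'\in\Theta:\,k\precneq k'\}$ is already contained in $T_i\setminus\{r_i\}$, because any such $k'$ lies in the same component $T_i$ and cannot equal the minimum $r_i$. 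Hence $\iota_{\{N_k\}}(N_k)$ computed over $T_i\setminus\{r_i\}$ corresponds via $M_k=M_{r_i}\times N_k$ to $\iota_\chi(M_k)$ computed over all of $\Theta$, and H3 transfers directly without further work.
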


\

Note that, in the above coordinate systems, the morphisms $\iota_{k,j}$  $(j \preceq k)$
also becomes the canonical projection by {\eqref{eq:ap-xxx-1}}.
Furthermore, $\{I_k\}_{k \in \Theta}$ satisfies the conditions:
\begin{enumerate}
\item $i \preceq j$ $\iff$ $I_i \supset I_j$ ($i,j \in \Theta$).
\item For $i \ne j \in \Theta$, either $I_i \subsetneq I_j$, $I_j \subsetneq I_i$
	or $I_i \cap I_j = \emptyset$ holds.
\item $I_k \setminus \bigcup_{k \precneq j} I_j = \{k\}$ for any $k \in \Theta$.
\end{enumerate}

The proof of the theorem is the same as that of Proposition 1.2 \cite{HP13}, where
we use $\{\varpi_k\}$ as coordinate functions $\{f_{k,i}\}$.

Let $\varpi^1=\{\varpi^1_k:X \to S^1_k\}_{k \in \Theta}$ 
and $\varpi^2=\{\varpi^2_k:X \to S^2_k\}_{k \in \Theta}$ 
be compatible families of submersive morphisms with respect to $(\Theta,\preceq)$.
A morphism of compatible submersive systems from $\varpi^1$ to $\varpi^2$ 
is a set of morphisms
$\varphi_X: X \to X$ and $\varphi_{k}:S^1_k \to S^2_k$
$(k \in \Theta)$ satisfying 
$$
\varpi^2_k\circ \varphi_X = \varphi_k \circ \varpi^1_k\qquad (k \in \Theta).
$$
Note that, for $j \preceq k$,
$$
\iota^2_{k,j}\circ \varphi_j = \varphi_k \circ \iota^1_{k,j}
$$
follows from \eqref {eq:ap-xxx-1}.

We say that the above compatible submersive systems are equivalent if
there exists an isomorphism of compatible subversive systems
between $\varpi^1$ and $\varpi^2$.
Assume that $\varpi^1$ and $\varpi^2$ are equivalent.
Then it is easy to see that since $\varphi_X \times \varphi_X: X^2 \to X^2$ gives
an isomorphism of the submanifolds
$\{(z,w) \in X^2\,|\, \varpi^1_k(z) = \varpi^1_k(w)\}$
and
$\{(z,w) \in X^2\,|\, \varpi^2_k(z) = \varpi^2_k(w)\}$,
an equivalent class of compatible submersive systems 
determine the complex
$$
\muhom_{\widehat{\chi}_2}
(\mathbb{C}_{\Delta^{\varpi_1}},\mathbb{C}_{\Delta^{\varpi_2}},\dots,
	\mathbb{C}_{\Delta^{\varpi_\ell}};\,
\mathscr{O}^{(0,n)}_{X^2}),
$$
up to isomorphisms. 

\end{section}

\section*{Acknowledgement}
The first author is supported in part by JSPS KAKENHI Grant 21K03284. The second author is supported in part by Gruppo
Nazionale per l'Analisi Matematica, la Probabilità e loro Applicazioni GNAMPA - INdAM.

\addcontentsline{toc}{section}{\textbf{References}}

\noindent
\parbox[t]{.48\textwidth}{
Naofumi HONDA \\
Department of Mathematics, \\
Faculty of Science, \\
Hokkaido University, \\
060-0810 Sapporo, Japan. \\
honda@math.sci.hokudai.ac.jp } \hfill
\parbox[t]{.48\textwidth}{
Luca PRELLI\\
Dipartimento di Matematica \\
Universit\`{a} degli Studi di Padova,\\
Via Trieste 63,\\
35121 Padova, Italia. \\
lprelli@math.unipd.it }

\end{document}